\title{Handel's fixed point theorem revisited}
\author{Juliana Xavier}
\date{}
\newcommand{\Z}{\mathbb{Z}}
\newcommand{\R}{\mathbb{R}}
\newcommand{\C}{\mathbb{C}}
\newcommand{\D}{\mathbb{D}}
\newcommand{\om}{\omega}
\DeclareMathOperator{\fix}{Fix}
\DeclareMathOperator{\per}{Per}
\DeclareMathOperator{\homeo}{Homeo}
\DeclareMathOperator{\inte}{Int}
\newtheorem{teo}{Theorem}[section]
\newtheorem{cor}[teo]{Corollary}
\newtheorem{lema}[teo]{Lemma}
\newtheorem{prop}[teo]{Proposition}
\theoremstyle{definition}
\newtheorem{obs}[teo]{Remark}
\newtheorem{ex}{Example}
\theoremstyle{remark}
\begin{document}

\maketitle

\begin{abstract} Michael Handel proved in \cite{handel} the existence
of a fixed point for an orientation preserving homeomorphism of the
open unit disk that can be extended to the closed disk, provided that
it has points whose orbits form an {\it oriented cycle of links at
infinity}.  Later, Patrice Le Calvez gave a different proof of this
theorem based only on Brouwer theory and plane topology arguments
\cite{patrice}.  These methods permitted to improve the result by
proving the existence of a simple closed curve of index 1.  We give a
new, simpler proof of this improved version of the theorem and
generalize it to non-oriented cycles of links at infinity.
\end{abstract}

\section{Introduction}\label{intro}

Handel's fixed point theorem \cite{handel} has been of great
importance for the study of surface homeomorphisms.  It guarantees
the existence of a fixed point for an orientation preserving
homeomorphism $f$ of the unit disk $\D = \{z\in \C : |z| < 1\}$
provided that it can be extended to the boundary $S^1 = \{z\in \C :
|z|=1\}$ and that it has points whose orbits form an oriented cycle
of links at infinity.  More precisely, there exist $n$ points $z_i
\in \D$ such that

$$\lim _{k \to -\infty} f^k(z_i) = \alpha _i \in S^1 ,  \ \lim _{k
\to +\infty} f^k(z_i) = \om _i \in S^1 ,$$ 

\noindent $i=1, \ldots, n$, where the $2n$ points $\{\alpha _i\}$, $\{\om _i\}$ are different points in $S^1$
and satisfy the following order property:

(*) $\alpha_{i+1}$ is the only one among these points that lies in the  open interval in the oriented circle $S^1$
from $\om_{i-1}$ to $\om_i$ .

\noindent (Although this is not Handel's original statement, it is an equivalent one as already pointed out in
\cite{patrice}).

Le Calvez gave an alternative proof of this theorem \cite{patrice},
relying only in Brouwer theory and plane topology, which allowed him
to obtain a sharper result. Namely, he weakened the extension
hypothesis by demanding  the homeomorphism to be extended just to $\D
\cup (\cup _{i \in \Z/n\Z} \{\alpha _i , \om _i\})$ and he strengthed
the conclusion by proving the existence of a simple closed curve of
index 1.

We give a new, simpler proof of this improved version of the theorem and we
generalize it to non-oriented cycles of links at infinity; that is, we relax 
the order property (*) as follows. \\

Let $P \subset \D$ be a compact convex $n$-gon.  Let $\{v_i : i\in \Z/n\Z\}$ be its set of vertices and for each 
$i\in \Z/n\Z$, let  $e_i$ be the edge joining
$v_i$ and $v_{i+1}$. We suppose that each $e_i$ is endowed with an
orientation, so that we can tell whether $P$ is to the right or to the left of $e_i$ .  We say that the orientations of
$e_i$ and $e_j$
{\it coincide} if $P$ is to the right (or to the left) of both $e_i$ and $e_j$, $i, j \in \Z/n\Z$. \\
We define the {\it index}\index{index!of a polygon} of $P$ by  

$$i(P) = 1 - \frac{1}{2} \sum _{i \in \Z/n\Z} \delta _i,$$

\noindent where $\delta _i = 0$ if the orientations of $e_{i-1}$ and $e_i$
coincide, and $\delta _i = 1$ otherwise.

We will note $\alpha _i$ and $\om _i$ the first, and respectively the last,
point  where the straight line $\Delta _i$ containing $e_i$ and inheriting its
orientation intersects $\partial \D$.

\begin{figure}[h]   
\begin{center}

   \subfigure[Handel's index 1 polygon]{\includegraphics[scale=0.5]{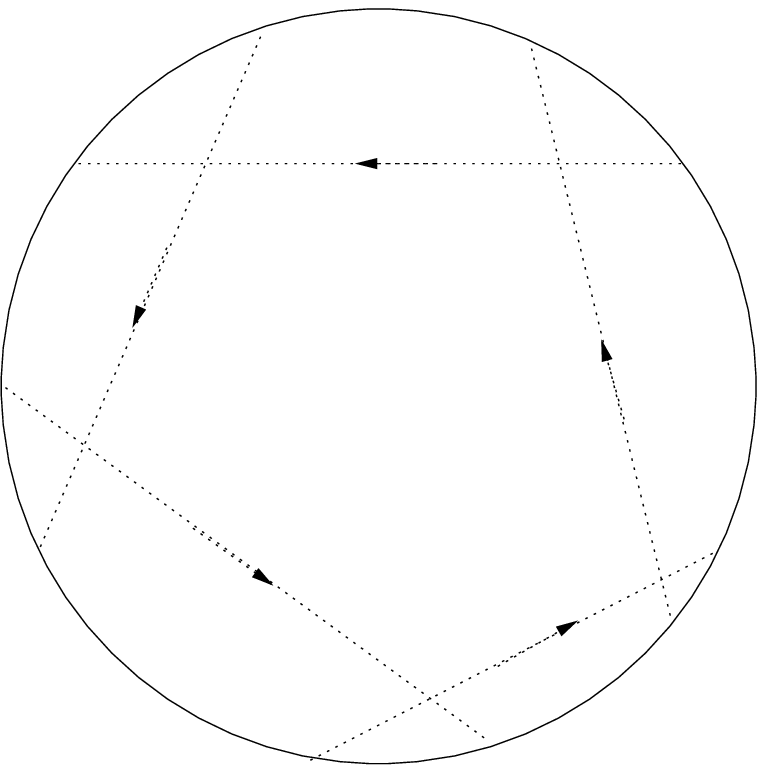}}\hspace{.25in}
    \subfigure[ Index -1 polygon] {\includegraphics[scale=0.5]{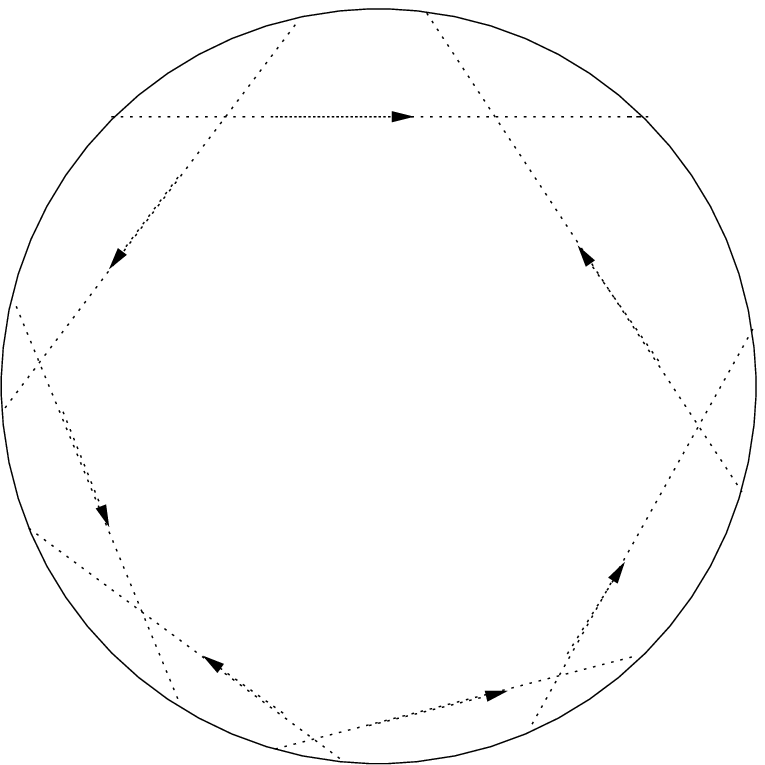}}\hspace{.25in}
    \subfigure[$\om _i = \alpha _{i+2} \ \forall i$ ] {\includegraphics[scale=0.5]{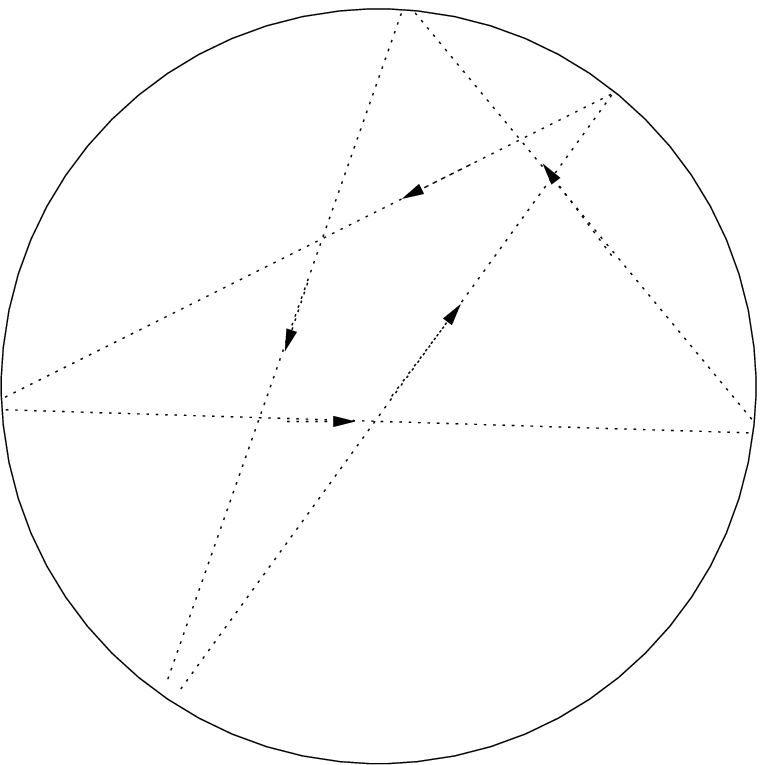}} \\
\caption {Polygons of different indices.}
\end{center}
 \end{figure}

\noindent We say that a homeomorphism $f:\D \to \D$ {\it realizes} $P$ if
there exists a family $(z_i)_{i\in \Z/n\Z}$ of points in $\D$ such
that for all $i\in \Z/n\Z$,
$$\lim _{k \to -\infty} f^k(z_i) = \alpha _i ,  \ \lim _{k \to
+\infty} f^k(z_i) = \om _i .$$\\

We will prove

\begin{teo}\label{main} Let $f : \D \to \D$ be an
orientation preserving homeomorphism  which realizes a compact convex
polygon $P\subset \D$ where the points $\alpha_i, \om_i, i\in \Z/n\Z$ are all different.  Suppose that $f$ can be extended to a homeomorphism of $\D
\cup (\cup _{i \in \Z/n\Z} \{\alpha _i , \om _i\}).$\\
If $i(P) \neq 0$, then $f$ has a fixed point.  Furthermore, if
$i(P)=1$, then there exists a simple closed curve $C\subset \D$ of
index  1 . 
 
\end{teo}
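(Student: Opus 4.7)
The strategy is to build, from the orbits realizing $P$, a Jordan curve $C \subset \D$ whose fixed-point index (the degree of $x \mapsto (f(x)-x)/|f(x)-x|$) equals $i(P)$. Both conclusions then follow: the first because a fixed-point-free orientation-preserving homeomorphism of $\D$, conjugated to a Brouwer homeomorphism of the plane, sends every Jordan curve to one of index $0$ (a corollary of Brouwer's plane translation theorem), so $i(P) \neq 0$ forces a fixed point in the bounded component of $\D \setminus C$; the second because, when $i(P) = 1$, the curve $C$ produced is itself the index-$1$ simple closed curve claimed.

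For the construction, I would first replace each orbit $\{f^k(z_i) : k \in \Z\}$ by a properly embedded topological arc $L_i$ from $\alpha_i$ to $\om_i$ in $\overline{\D}$, obtained by joining consecutive iterates $f^k(z_i)$ and $f^{k+1}(z_i)$ via free translation arcs. Standard Brouwer-theoretic disjointification allows the $L_i$ to be chosen pairwise disjoint in $\D$, and an ambient isotopy pushes each $L_i$ into a thin neighbourhood of the edge $e_i$ away from the vertices of $P$. The Jordan curve $C$ is then obtained by closing consecutive arcs $L_{i-1}$ and $L_i$ near each vertex $v_i$ with a short connector arc.

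The key computation is to identify the index of $C$ with $i(P)$. Along the oriented arc $L_i$, the vector $f(x) - x$ points (up to small perturbations) in the direction of $\Delta_i$, so the rotation of $f - \id$ along $L_i$ contributes essentially nothing to the integer count. At the vertex $v_i$, the closing arc contributes no net rotation when $\delta_i = 0$ (the orientations of $e_{i-1}$ and $e_i$ agree and $f - \id$ continues smoothly) and a half-turn when $\delta_i = 1$ (the orientations disagree and $f - \id$ reverses). Since a convex polygonal contour has total turning $2\pi$ and each half-turn subtracts $\tfrac{1}{2}$ from the index count, the final value is $1 - \tfrac{1}{2}\sum_{i \in \Z/n\Z} \delta_i = i(P)$.

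The main obstacle is twofold. First, the $L_i$ and the closing arcs must be arranged to produce a genuinely \emph{simple} closed curve; the distinctness of the points $\alpha_i, \om_i$ together with the cyclic order on $\partial \D$ imposed by the convexity of $P$ is essential here, and the non-oriented cycles and star-like arrangements in the figure require more delicate control than Handel's original oriented case. Second, the rotation computation at each vertex must be carried out in a purely topological setting: one has no smooth vector field, and must read off the behaviour of $f - \id$ near each $v_i$ from the combinatorics of how the orbits of $z_{i-1}$ and $z_i$ approach that vertex.
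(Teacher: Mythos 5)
Your proposal is a genuinely different route from the paper's, and it contains a gap that I do not see how to fill. The paper does not attempt any pointwise computation of the displacement $f(x)-x$; instead it works with a maximal free brick decomposition of $\D\setminus\fix(f)$, packages the orbits into repellers $R_i$ (past of a brick near $\alpha_i$) and attractors $A_i$ (future of a brick near $\om_i$), shows that the cyclic order on $\partial\D$ forces these families to form either an \emph{elliptic} or a \emph{hyperbolic} Repeller/Attractor configuration, and then proves two purely combinatorial propositions: an elliptic configuration forces recurrence (hence an index-$1$ curve via Franks' lemma), and a hyperbolic configuration forces a fixed point by an induction that contradicts maximality. Nowhere is the direction of $f(x)-x$ used or needed.

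The central gap in your argument is the sentence ``along the oriented arc $L_i$, the vector $f(x)-x$ points (up to small perturbations) in the direction of $\Delta_i$.'' Nothing in the hypotheses controls the displacement vector away from the orbit points. A translation arc $\gamma$ from $z$ to $f(z)$ satisfies only $f(\gamma)\cap\gamma=\{f(z)\}$; the direction of $f(x)-x$ for $x$ in the interior of $\gamma$ is completely arbitrary, and can make any number of full turns. Likewise, convergence of $f^k(z_i)$ to $\alpha_i$ and $\om_i$ constrains where the orbit accumulates on $\partial\D$, not how $f$ displaces nearby points. So the claimed ``no net rotation along $L_i$'' has no justification, and therefore neither does the vertex count that is supposed to recover $1-\tfrac12\sum\delta_i$. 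Without this, the index of $C$ is simply unknown, and the reduction to the Brouwer index-$0$ fact (which is correct) does not get off the ground.

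Two secondary issues. First, the ``standard Brouwer-theoretic disjointification'' making the $L_i$ pairwise disjoint is not standard and is, in fact, where most of the work lies: the paper's Lemma~\ref{ends} and Corollary~\ref{bon} build carefully controlled families of free disks precisely because the translation arcs of different orbits can interleave arbitrarily, and the cyclic-order constraints must be propagated through the brick decomposition rather than imposed by an ambient isotopy (which in any case would not commute with $f$ and so would change the index you are trying to compute). Second, even granting a Jordan curve $C$ with the right index, the first conclusion would only place a fixed point in the bounded component of $\D\setminus C$; the paper's hyperbolic case instead yields $\fix(f)\neq\emptyset$ directly from the combinatorics, and the case $i(P)=1$ yields recurrence and hence an index-$1$ curve via Franks' lemma, without ever exhibiting a curve whose index is read off geometrically.
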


The two polygons appearing in Figure 1 (a) and (b) satisfy the hypothesis of this theorem.  However,  the polygon
illustrated in (c) does not, as there are coincidences among the points 
$\{\alpha_i\}, \{\om_i\}$, $i\in \Z/n\Z$.

I am endebted to Patrice Le Calvez.  Not only he suggested me
to study possible generalizations of Handel's theorem, but he guided
my research through a great number of discussions.

\section{Preliminaries}\label{bricks}

\subsection{Brick decompositions}

A {\it brick decomposition}\index{brick! decomposition} $\cal D$ of an orientable surface $M$ is
a $1$- dimensional singular submanifold $\Sigma (\cal D)$ (the {\it
skeleton} of the decomposition), with the property that the set of singularities  $V$ is discrete and such that every 
$\sigma \in V$ has a neighborhood $U$
for which $U\cap (\Sigma ({\cal D})\backslash V)$ has exactly
three connected components. We have illustrated two brick decompositions
in Figure 4. The {\it bricks}\index{brick} are the closure of the
connected components of $M \backslash \Sigma (\cal D)$ and the {\it
edges} are the closure of the connected components of $\Sigma({\cal D})\backslash V$. We will write $E$ for the set of edges, $B$ for
the set of bricks and finally $\cal D$ = $(V,E,B)$ for a brick
decomposition.

\begin{figure}[h]\label{brickdec}
\begin{center}

   \subfigure[$M = \R ^2$]{\includegraphics[scale=0.4]{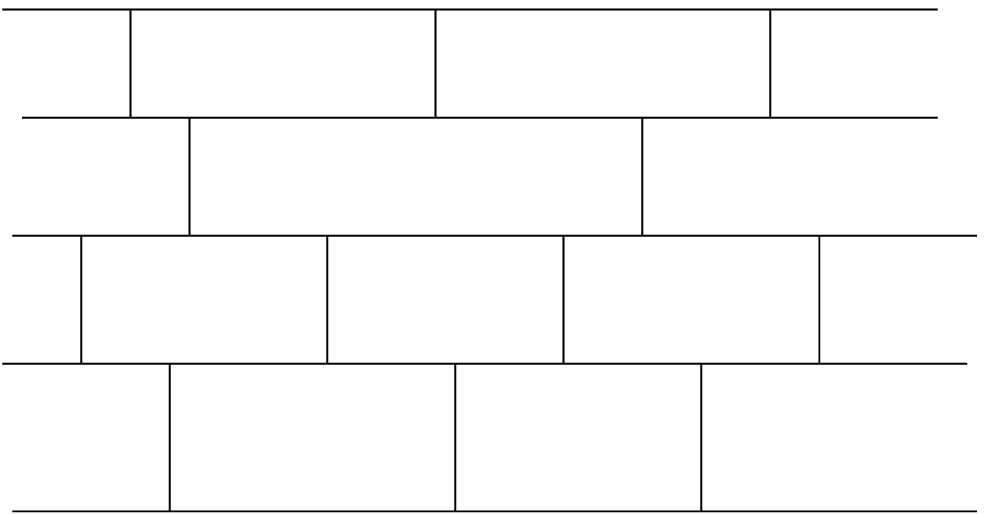}}\hspace{.25in}
    \subfigure[$M = \R ^2 \backslash \{0\}$]{\includegraphics[scale=0.5]{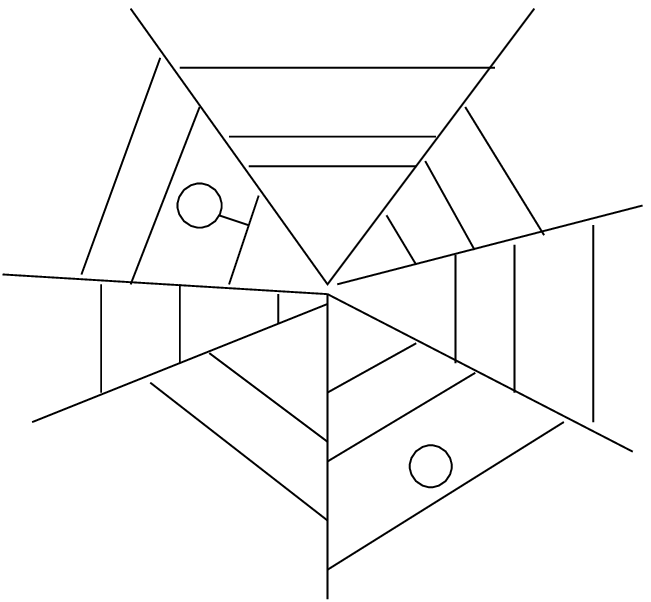}} \\
\caption{Brick decompositions}
\end{center}
 \end{figure}

Let $\cal D$ = $(V,E,B)$  be a brick decomposition of $M$.  We say
that $X\subset B$ is {\it connected} if given two bricks $b$, $b' \in X$,
there exists a sequence $(b_i)_{0\leq i \leq n}$, where $b_0 = b$,
$b_n = b'$ and such that $b_i$ and $b_{i+1}$ have non-empty intersection, $i\in \{0, \ldots, n-1\}$.  Whenever
two bricks $b$ and $b'$ have no empty intersection, we say that they are {\it adjacent}\index{brick! adjacent}.  Moreover, we say that a brick
$b$ is {\it adjacent to a subset} $X\subset B$ if $b\notin X$, but $b$ is adjacent to one of the bricks in $X$.  We say that
$X\subset B$ is adjacent to $X'\subset B$ if $X$ and $X'$ have no common bricks but there exists $b\in X$ and $b'\in X'$
which are adjacent.

From now on we will identify a subset $X$ of $B$ with the closed subset of $M$ formed by
the union of the bricks in $X$.  By making so, there may be ambiguities (for instance, two adjacent subsets of $B$ have empty 
intersection in $B$ and nonempty intersection in $M$), but we will point it out when this happens. We remark that $\partial X$ is a
one-dimensional topological manifold and that the connectedness of
$X\subset B$ is equivalent to the connectedness of $X\subset M$ and
to the connectedness of $\inte (X) \subset M$ as well. We say that the decomposition $\cal D '$ is a 
{\it subdecomposition}\index{brick! subdecomposition}
of $\cal D$ if $\Sigma (\cal D ')$ $\subset \Sigma (\cal D)$.

If $f:M\to M$ is a homeomorphism, we define the application $\varphi
: {\cal P} (B) \to {\cal P} (B)$ as follows:
$$\varphi (X) = \{b\in B : f(X)\cap b
\neq \emptyset\}.$$\\

We remark that $\varphi (X)$ is connected whenever $X$ is.

We define analogously an application $\varphi _-: {\cal P} (B) \to {\cal
P} (B)$:

$$\varphi _- (X) =  \{b\in B :
f^{-1}(X)\cap b \neq \emptyset\}.$$\\

\begin{figure}[h]
\begin{center}
\psfrag{a}{$b$}\psfrag{b}{$f(b)$}\psfrag{c}{$\varphi (\{b\})$}

   {\includegraphics[scale=0.5]{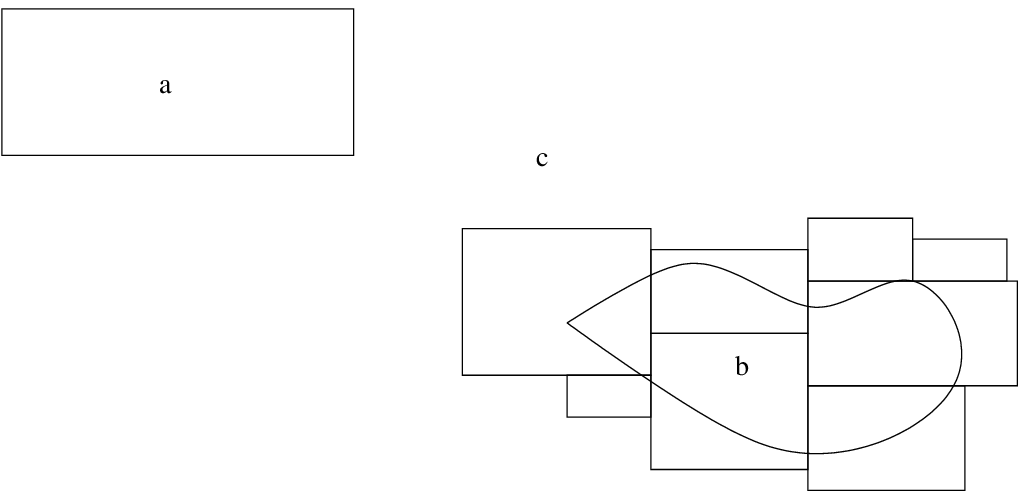}}
  \\

\end{center}
 \end{figure}

We define the {\it future}\index{brick! future} $[b]_\geq$ and the {\it past}\index{brick! past} $[b]_\leq$
of a brick $b$ as follows:

$$[b]_\geq = \bigcup _{k\geq 0} \varphi ^k (\{b\}),\  [b]_\leq =
\bigcup _{k\geq 0} \varphi _- ^k (\{b\}).$$

We also define the {\it strict future} $[b]_>$ and the {\it strict
past} $[b]_<$ of a brick $b$ :
$$[b]_> = \bigcup _{k>0} \varphi ^k (\{b\}),\  [b]_< = \bigcup
_{k> 0} \varphi _- ^k (\{b\}).$$

We say that a set $X\subset B$ is an {\it attractor}\index{attractor} if it verifies
$\varphi (X) \subset X$; this is equivalent in $M$ to the inclusion
$f(X)\subset \inte(X)$.  A {\it repeller}\index{repeller} is any set which verifies
$\varphi _- (X) \subset X$.  In this way, the future of any brick is an attractor,
and the past of any brick is a repeller. We observe that $X\subset B$
is a repeller if and only if $B\backslash X$ is an attractor.\\

\begin{obs}\label{br} The following properties can be deduced from the fact that $X\subset B$ is an attractor if and only if 
$f(X)\subset \inte (X)$:

\begin{enumerate}
 \item\label{br1} If $X\subset B$ is an attractor and $b\in X$, then $[b]_\geq \subset X$ ; 
if $X\subset B$ is a repeller and $b\in X$, then $[b]_\leq \subset X$,
\item\label{br2} if  $X\subset B$ is an attractor and $b\notin X$, then $[b]_\leq \cap X = \emptyset$ ;
if  $X\subset B$ is a repeller and $b\notin X$, then $[b]_\geq \cap X = \emptyset$,
\item\label{br3} if $b\in B$ is adjacent to the attractor $X\subset B$, then $[b]_> \cap X\neq \emptyset$;  
if $b\in B$ is adjacent to the repeller $X\subset B$, then $[b]_< \cap X\neq \emptyset$; 
\item\label{br4} two attractors are disjoint as subsets of $B$ if and only if they are disjoint as subsets of $M$;
in other words, two disjoint (in $B$) attractors  cannot be adjacent; respectively two disjoint (in $B$) repellers 
cannot be adjacent;
\end{enumerate}
 
\end{obs}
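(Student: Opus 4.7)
The plan is to deduce all four properties from (a) the characterization stated just above the remark that $X$ is an attractor iff $f(X)\subset\inte(X)$ (dually, $X$ is a repeller iff $f^{-1}(X)\subset\inte(X)$), and (b) the obvious monotonicity of $\varphi$ and $\varphi_-$: if $Y\subset Z$ then $\varphi(Y)\subset\varphi(Z)$, since $f(Y)\subset f(Z)$, and symmetrically for $\varphi_-$. I would treat (1) and (2) first as purely formal consequences of these two facts, then (3) as the only step using the topology of the decomposition, and finally (4) as a corollary of (1) and (3).

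For (1), the attractor condition $\varphi(X)\subset X$ combined with monotonicity gives by induction $\varphi^k(X)\subset X$ for every $k\geq 0$; specializing to $\{b\}\subset X$ yields $[b]_\geq=\bigcup_{k\geq 0}\varphi^k(\{b\})\subset X$. The repeller version is identical with $\varphi_-$. For (2), I would apply (1) to the repeller $B\setminus X$ (using the observation already made in the text that the complement of an attractor is a repeller): if $b\notin X$, then $b\in B\setminus X$ gives $[b]_\leq\subset B\setminus X$, i.e., $[b]_\leq\cap X=\emptyset$, and the dual statement is analogous.

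The only geometric step is (3). If $b\notin X$ is adjacent to the attractor $X$, then by definition some $b'\in X$ meets $b$ in $M$ at a point $v\in b\cap b'\subset X$, and the attractor condition forces $f(v)\in f(X)\subset\inte(X)$; since $v\in b$ we also have $f(v)\in f(b)$, so $f(b)\cap\inte(X)\neq\emptyset$. Any brick of $X$ containing a point of this intersection lies in $\varphi(\{b\})\cap X$, whence $[b]_>\cap X\neq\emptyset$; the repeller case is symmetric. For (4), one direction is trivial; conversely, if $X$ and $Y$ are attractors disjoint in $B$ but share a point in $M$, there must exist adjacent bricks $b\in X$ and $b'\in Y$ with $b'\notin X$, so (3) yields $[b']_>\cap X\neq\emptyset$, while (1) applied to $b'\in Y$ forces $[b']_>\subset Y$, contradicting $X\cap Y=\emptyset$ in $B$. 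The repeller case is identical. The only mild subtlety I anticipate is in (3), where one has to be confident that a single shared boundary point $v$ suffices to produce a whole brick of $X$ meeting $f(b)$; this is exactly what the equivalence $f(X)\subset\inte(X)$ provides, so no further topological input beyond the definitions is required.
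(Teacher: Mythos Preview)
Your proof is correct. The paper states this as a remark without proof, merely asserting that the four properties ``can be deduced'' from the characterization $f(X)\subset\inte(X)$; your argument supplies precisely the intended deductions, and the one non-formal step (item~3, where you use $f(v)\in\inte(X)$ to land in an actual brick of $X$) is exactly the point where that characterization does the work.
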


The following conditions are equivalent:
$$b\in [b]_>,\ [b]_> = [b]_\geq,\ b\in [b]_<,\ [b]_< = [b]_\leq,\ [b]_<\cap
[b]_\geq \neq \emptyset,\ [b]_\leq\cap [b]_> \neq \emptyset.$$

The existence of a brick $b\in B$ for which any of these conditions is 
satisfied is equivalent to the existence of a {\it closed chain of
bricks }, i.e a family  $(b_i)_{i\in \Z/r\Z}$ of bricks such that for
all $i\in \Z/r\Z$, $\cup _{k \geq 1} f^k(b_{i}) \cap b_{i+1}\neq
\emptyset$.  

In general, a {\it chain}\index{chain} for $f\in \homeo (M)$ is a family
$(X_i)_{0\leq i \leq r}$ of subsets of $M$ such that for all $0\leq i
\leq r-1$ ,
$\cup _{k\geq 1} f^k(X_i) \cap X_{i+1}\neq \emptyset$. We say that
the chain is
closed if $X_{r}= X_{0}$.

We  say that a subset $X\subset M$  is {\it
free}\index{free! set} if $f(X)\cap X= \emptyset$.

We  say that a brick decomposition $\cal D$ = $(V,E,B)$  is {\it
free}\index{free!brick decomposition} if every $b\in B$ is a free subset of $M$. If $f$ is
fixed point free it is always possible, taking sufficiently small
bricks, to construct a free brick decomposition.\\

We recall the definition of {\it maximal free decomposition}\index{maximal! free decomposition}, which
was introduced by Sauzet in his doctoral thesis \cite{sauzet}.  Let
$f$ be a fixed point free homeomorphism of a surface $M$. We say that
$\cal D$ is a maximal free decomposition if $\cal D$ is free and any
strict subdecomposition is no longer free.  Applying Zorn's lemma, it
is always possible to construct a maximal free 
subdecomposition of a given 
brick decomposition $\cal D$.

\subsection{Brouwer Theory background.}

We say that $\Gamma : [0,1]\to \overline \D$ is an {\it arc}\index{arc}, if it is continuous and injective.  We say that an arc $\Gamma$
joins $x\in \overline \D$ to $y\in \overline \D$, if $\Gamma (0)= x $ and $\Gamma (1)= y $.  We say that an arc $\Gamma$
joins $X\subset \overline \D$ to $Y\subset \overline \D$, if $\Gamma$ joins $x\in X$ to $y\in Y$.\\

Fix $f\in \homeo ^+ (\D)$.  An arc $\gamma$ joining $z\notin \fix (f)$ to $f(z)$ such that 
$f(\gamma)\cap \gamma = \{z, f(z)\}$ if $f^2(z) = z$ and $f(\gamma)\cap \gamma = \{ f(z)\}$ otherwise, is called a 
{\it translation arc}\index{arc! translation}.

\begin{prop}{\bf (Brouwer's translation lemma \cite{brouwer}, \cite{brown}, \cite{fathi} or \cite{guillou})}  If any
of the two following hypothesis is satisfied, then there exists a  simple closed curve 
of index 1:

\begin{enumerate}
 \item  there exists  a translation arc $\gamma$ joining  $z\in \fix (f^2)\backslash \fix (f)$ to $f(z)$;
\item  there exists  a translation arc $\gamma$ joining $z\notin \fix (f^2)$ to 
$f(z)$ and an integer $k\geq 2$ such that  $f^k(\gamma) \cap \gamma \neq \emptyset$.
\end{enumerate}

\end{prop}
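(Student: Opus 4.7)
The plan is, in each case, to build a Jordan curve $C\subset\D$ from $\gamma$ and its $f$-iterates, verify that $f$ has no fixed point on $C$, and check that the displacement $p\mapsto (f(p)-p)/|f(p)-p|$ has degree $+1$ along $C$. An application of the Lefschetz--Hopf formula to the disk enclosed by $C$ then produces both the index-$1$ curve and a fixed point in its interior.

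In case (1), $f^2(z)=z$ and $f(z)\neq z$, so $f(\gamma)$ runs from $f(z)$ back to $z$ and meets $\gamma$ only at $\{z,f(z)\}$; thus $C:=\gamma\cup f(\gamma)$ is already a Jordan curve with $f(C)=f(\gamma)\cup f^2(\gamma)=C$. Since $f$ is orientation-preserving, it must preserve the bounded complementary component. The restriction $f|_C$ is an orientation-preserving involution of $C\cong S^1$ interchanging two points, hence conjugate to the half-turn; in particular $f$ has no fixed points on $C$, and the displacement winds exactly once positively around $C$.

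In case (2), pick $k\geq 2$ minimal with $f^k(\gamma)\cap\gamma\neq\emptyset$ and write $\gamma_i:=f^i(\gamma)$. The translation-arc condition gives $\gamma_i\cap\gamma_{i+1}=\{f^{i+1}(z)\}$ and the minimality of $k$ gives $\gamma_i\cap\gamma_j=\emptyset$ for $2\leq j-i<k$, so $\Gamma:=\gamma_0\cup\cdots\cup\gamma_{k-1}$ is a simple arc from $z$ to $f^k(z)$. Parametrizing $\gamma_k$ from $f^k(z)$, its first intersection with $\Gamma$ beyond $f^k(z)$ must lie on $\gamma_0$ (since $\gamma_k$ avoids $\gamma_1,\ldots,\gamma_{k-2}$ by minimality and touches $\gamma_{k-1}$ only at $f^k(z)$); call this point $p$. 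The concatenation of the sub-arc of $\gamma_0$ from $p$ to $f(z)$, the arcs $\gamma_1,\ldots,\gamma_{k-1}$, and the sub-arc of $\gamma_k$ from $f^k(z)$ back to $p$ is the desired Jordan curve $C$.

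The main obstacle is the index computation in case (2). I would handle it by an orientation argument: near each common vertex $f^{i+1}(z)\in\gamma_i\cap\gamma_{i+1}$, orientation-preservation of $f$ combined with the translation-arc property pins down on which side of $\gamma_{i+1}$ the map $f$ sends a one-sided neighbourhood of $\gamma_i$; piecing these local data together along $C$ forces the displacement to rotate monotonically, and summing over the $k$ arcs and the two sub-arc segments contributed by $\gamma_0$ and $\gamma_k$ yields total winding $+1$. A cleaner alternative is a proof by contradiction: if $f$ were fixed-point-free, Brouwer's plane-translation theorem would yield a global Brouwer-line structure incompatible with the closed chain $\gamma\to f(\gamma)\to\cdots\to f^k(\gamma)$ returning onto $\gamma$.
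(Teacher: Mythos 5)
The paper does \emph{not} prove this proposition; it is quoted verbatim as a classical fact with citations to Brouwer, Brown, Fathi, and Guillou, and used as a black box throughout. So there is no ``paper's own proof'' to compare against. Judged on its own merits, your sketch follows the right classical idea (build a Jordan curve from the translation arc and its iterates, compute the winding number of the displacement along it), but it has two genuine defects.

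In case (1) you assert that $C=\gamma\cup f(\gamma)$ is $f$-invariant because $f(C)=f(\gamma)\cup f^2(\gamma)=C$. That equality is false in general: $f^2$ fixes $z$ and $f(z)$, but nothing forces $f^2(\gamma)=\gamma$ as a set, so $f(C)\neq C$ and $f|_C$ need not be an involution of the circle. Your deduction that $f$ has no fixed points on $C$ and that the index is $+1$ therefore does not follow from the ``half-turn'' picture. (The conclusion that $C\cap\fix(f)=\emptyset$ is still true, but for the elementary reason that any fixed point on $\gamma$ would lie in $\gamma\cap f(\gamma)=\{z,f(z)\}$, and neither $z$ nor $f(z)$ is fixed; the index computation still has to be done by a local orientation argument, exactly the kind you only sketch in case (2).) In case (2) the construction of the Jordan curve $C$ via the minimal return time and the first re-entry point on $\gamma_0$ is correct and standard, but the paragraph labelled ``the main obstacle'' is precisely where the theorem lives, and it is only a plan, not an argument: you do not actually pin down the local orientation data or sum it to get $+1$. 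Finally, the ``cleaner alternative'' is circular: Brouwer's plane translation theorem (every point lies on a Brouwer line when $f$ is fixed-point-free) is itself proved from this translation-arc lemma, so invoking it here begs the question. If you want a self-contained argument you must carry out the index computation; the references the paper cites (in particular Brown's and Fathi's notes) do exactly that.
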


If $z\notin \fix (f)$, there exists a translation arc containing $z$; this is easy to prove once one has that the connected 
components of the complement of $\fix (f)$ are invariant.  For a proof of this last fact, see \cite{brownkister} for a general proof in any dimension, or \cite{duke}
for an easy proof in dimension 2.

We deduce:

\begin{cor} If $\per (f) \backslash \fix (f) \neq \emptyset$, then  there exists a  simple closed curve 
of index 1. 
 
\end{cor}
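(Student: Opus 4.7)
The plan is to apply Brouwer's translation lemma (the proposition stated just before this corollary) directly to a periodic orbit. Let $z \in \per(f)\setminus \fix(f)$ have minimal period $n \geq 2$. By the remark immediately following the proposition (asserting that every non-fixed point lies on some translation arc), there exists a translation arc $\gamma$ joining $z$ to $f(z)$.

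I would then split into two cases according to the period. If $n = 2$, then $z \in \fix(f^2) \setminus \fix(f)$, and the first clause of the translation lemma applied to $\gamma$ produces a simple closed curve of index $1$. If $n \geq 3$, then $f^2(z) \neq z$, so $z \notin \fix(f^2)$ and $\gamma$ is a translation arc of the type required by the second clause. Moreover $z$ is an endpoint of $\gamma$ and $f^n(z) = z$, so
\[
z \in \gamma \cap f^n(\gamma),
\]
which shows $f^n(\gamma) \cap \gamma \neq \emptyset$ with $n \geq 2$. The second clause of the translation lemma then gives the desired simple closed curve of index $1$.

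There is no real obstacle here: the corollary is a one-line consequence of the translation lemma, and the only thing to check is that the two cases of the lemma (period exactly $2$ versus the existence of a later self-intersection $f^k(\gamma) \cap \gamma \neq \emptyset$) between them exhaust all possibilities for a periodic point of period $\geq 2$.
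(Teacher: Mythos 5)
Your proof is correct and is exactly the argument the paper leaves implicit (the corollary is stated with only ``We deduce:'' and no written proof). Taking a translation arc through a non-fixed periodic point $z$ and splitting on whether the minimal period is $2$ (so $z\in\fix(f^2)\setminus\fix(f)$, clause~1) or at least $3$ (so $z\notin\fix(f^2)$ and $z\in\gamma\cap f^n(\gamma)$ gives $f^n(\gamma)\cap\gamma\neq\emptyset$ with $n\geq 2$, clause~2) is precisely the intended one-line deduction from Brouwer's translation lemma.
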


\begin{prop} {\bf (Franks' lemma \cite{pbf})} If there
exists a closed chain of free, open and pairwise disjoint disks for $f$, then there exists a simple closed curve of index 1.
 
\end{prop}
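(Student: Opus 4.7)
The plan is to argue by contradiction, leveraging Brouwer's translation lemma to promote the closed chain of disks into a translation arc whose iterate meets itself. Suppose there is no simple closed curve of index $1$. By the corollary above, $\per(f)=\fix(f)$, so $f$ has no periodic orbit of period at least $2$; since no free disk can contain a fixed point, we may restrict to the connected component of $\D\setminus\fix(f)$ meeting the chain and assume $f$ is fixed-point-free. In particular, every point lies on some translation arc, and under the standing assumption part (2) of Brouwer's translation lemma tells us that $f^k(\gamma)\cap\gamma=\emptyset$ for every translation arc $\gamma$ and every $k\geq 2$.

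Pick $z_i\in D_i$ and an integer $n_i\geq 1$ with $f^{n_i}(z_i)\in D_{i+1}$. For each $0\leq j<n_i$, choose a translation arc $\gamma_{i,j}$ from $f^{j}(z_i)$ to $f^{j+1}(z_i)$; inside the open, path-connected disk $D_{i+1}$, choose an arc $\beta_{i+1}$ from $f^{n_i}(z_i)$ to $z_{i+1}$, which satisfies $f(\beta_{i+1})\cap\beta_{i+1}=\emptyset$ by freeness of $D_{i+1}$. Concatenating along the cycle yields a closed loop $\Gamma\subset\D$. The cyclic structure of $\Gamma$ forces a recurrence that, under the standing assumption, cannot be absorbed into any single $\gamma_{i,j}$, nor into any $\beta_i$ (which lies in a pairwise disjoint free disk). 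Some iterate $f^k(\gamma_{i',j'})$, $k\geq 2$, must therefore meet a different piece of $\Gamma$, and a pigeonhole argument along the cycle should upgrade this to a single translation arc meeting its own iterate, contradicting Brouwer's translation lemma.

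The main obstacle is making this ``pigeonhole along the cycle'' rigorous. A convenient framework is the brick decomposition language of the preliminaries: choose a free brick decomposition $\mathcal{D}$ of $\D\setminus\fix(f)$ refined enough that each $z_i$ lies in its own brick $b_i\subset D_i$ and that transitions between consecutive pieces of the chain happen via brick adjacencies inside the $D_i$. The disk-chain condition then induces a closed chain of bricks in the dynamical sense of the preliminaries, and by the equivalence stated after Remark \ref{br}, this produces a brick $b$ with $b\in[b]_>$, i.e., $f^k(b)\cap b\neq\emptyset$ for some $k\geq 2$ (freeness rules out $k=1$). From such a self-returning brick one manufactures a translation arc $\gamma$ through a point $z\in b$ with $f^k(\gamma)\cap\gamma\neq\emptyset$, and Brouwer's translation lemma finishes the proof. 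The hardest step is verifying the chain-of-bricks property across the $\beta_i$, because adjacent bricks inside a free disk do not automatically satisfy the dynamical chain condition; one must exploit the pairwise disjointness of the $D_i$ simultaneously with their freeness to bridge these gaps.
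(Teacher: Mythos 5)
The paper does not prove this proposition; it cites it directly from Franks' work (\cite{pbf}), so there is no internal proof to compare against. Your submission is explicitly a plan rather than a proof, and both of the routes you sketch have genuine gaps that stop them short of a proof.

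The concatenation route fails because nothing forces the closed loop $\Gamma$ to produce a self-intersecting iterate of a single translation arc: concatenating the $\gamma_{i,j}$ and the $\beta_i$ gives a loop, but the loop may not even be simple, and ``the cyclic structure forces a recurrence'' is asserted, not argued. The pigeonhole you invoke would need to show that some specific iterate $f^k(\gamma_{i,j})$ meets the same $\gamma_{i,j}$, and there is no mechanism in the sketch that produces this; you acknowledge this yourself. The brick-decomposition fallback has a concrete error and a deeper problem. The error: a brick $b$ with $b\in[b]_>$ does \emph{not} satisfy $f^k(b)\cap b\neq\emptyset$ for some $k$. Unwinding the definition of $\varphi$, the condition $b\in\varphi^k(\{b\})$ yields a sequence of bricks $b=c_0,c_1,\dots,c_k=b$ with $f(c_j)\cap c_{j+1}\neq\emptyset$, i.e., a closed chain of bricks, but it does not give $f^k(b)\cap b\neq\emptyset$ because the forward images pass through the intermediate bricks $c_j$, not through $b$ alone. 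The deeper problem: even granting a self-returning brick, the step ``one manufactures a translation arc $\gamma$ through a point of $b$ with $f^k(\gamma)\cap\gamma\neq\emptyset$'' is exactly the hard content of Franks' lemma; handing it off to Brouwer's translation lemma does not close the gap. (There is also a risk of circularity if you try to resolve this via Proposition \ref{franksfino}, whose conclusion is precisely the hypothesis of the proposition you are trying to prove.) The actual proofs of this result, by Franks and later in sharpened form by Guillou and Le Roux, proceed differently: one perturbs $f$ by a homeomorphism supported in the free disks (as in Lemma \ref{gl} of this paper) to create a genuine periodic orbit for a modified map $g$ with $\fix(g)=\fix(f)$, then applies the Brouwer theory to $g$ and argues that the resulting index-$1$ curve also works for $f$. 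That last step is delicate and is the real substance missing from your sketch.
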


Following Le Calvez \cite{patrice}, we will say that $f$ is {\it recurrent}\index{recurrent homeomorphism} if there
exists a closed chain of free, open and pairwise disjoint disks for $f$.\\

The following proposition is a refinement of Franks' lemma due to Guillou and  Le Roux (see \cite{leroux}, page 39).

\begin{prop}\label{guile} Suppose there exists a closed chain
$(X_i)_{i\in\Z/r\Z}$ for $f$ of free subsets  whose
interiors are pairwise disjoint and which verify the following
property: given any two points $z,z'\in X_i$ there exists an
arc $\gamma$ joining $z$ and $z'$ such that $\gamma\backslash\{z,z'\}\subset
\inte (X_i)$. Then, $f$ is recurrent.
 
\end{prop}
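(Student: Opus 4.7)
Use the chain hypothesis to pick, for each $i \in \Z/r\Z$, a point $p_i \in X_i$ and an integer $k_i \geq 1$ such that $q_i := f^{k_i}(p_i) \in X_{i+1}$. Since $q_{i-1}$ and $p_i$ both lie in $X_i$, the arc-joining hypothesis furnishes an arc $\alpha_i \subset X_i$ from $q_{i-1}$ to $p_i$ with $\alpha_i \setminus \{q_{i-1}, p_i\} \subset \inte(X_i)$. The strategy is to replace each $X_i$ by a free open topological disk $D_i$ thickening $\alpha_i$, thereby obtaining a closed chain of free, open, pairwise disjoint disks, and to conclude by Franks' lemma that $f$ is recurrent.

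The disk $D_i$ will be a tubular open neighborhood of the open arc $\alpha_i \setminus \{q_{i-1}, p_i\}$ inside the open surface $\inte(X_i)$: elementary plane topology produces such a $D_i$ as an open topological disk with $\overline{D_i} \supset \alpha_i$, so that in particular $p_i, q_{i-1} \in \overline{D_i}$. Each $D_i$ is free because $D_i \subset X_i$ is free, and the family $(D_i)_{i\in\Z/r\Z}$ is pairwise disjoint because the interiors $\inte(X_i)$ are.

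The crucial and most delicate point is the chain condition $f^{k_i}(D_i)\cap D_{i+1}\neq\emptyset$ for every $i$. The obstacle is that $p_i$ may lie on $\partial X_i$ and $q_i$ on $\partial X_{i+1}$, so $p_i \notin D_i$ and $q_i \notin D_{i+1}$ in general; one only has $p_i \in \overline{D_i}$ and $q_i \in \overline{D_{i+1}}$. Continuity of $f^{k_i}$ at $p_i$ sends small neighborhoods of $p_i$ into preassigned neighborhoods of $q_i$, and $D_{i+1}$ approaches $q_i$ arbitrarily closely from inside $\inte(X_{i+1})$; however, two open sets that merely share $q_i$ on their closures need not meet. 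The task therefore reduces to shaping each $D_i$ carefully near its endpoints $p_i$ and $q_{i-1}$ within $\inte(X_i)$ — making $D_i$ contain an entire one-sided disk neighborhood of $p_i$ inside $\inte(X_i)$, and similarly for $D_{i+1}$ at $q_i$ — so that the continuity bound on $f^{k_i}$ forces some point of $D_i$ close to $p_i$ to have its image in $D_{i+1}$.

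This last refinement is the principal obstacle of the proof: it must be carried out simultaneously for all $i \in \Z/r\Z$, a finite but cyclically coupled collection of local conditions. Once it is achieved, the $D_i$ form a closed chain of free, open, pairwise disjoint topological disks, and Franks' lemma yields the existence of a simple closed curve of index $1$, so that $f$ is recurrent in the sense of Le Calvez.
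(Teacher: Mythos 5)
The paper itself does not prove this proposition; it is quoted from Guillou and Le Roux, so there is no internal argument to compare your proposal against. Assessed on its own terms, your plan correctly isolates the critical point, and you honestly flag that you do not complete it: you write ``Once it is achieved, the $D_i$ form a closed chain\ldots'' but the achievement is never supplied. Worse, the device you propose --- shaping each $D_i$ into a one-sided disk at its endpoints inside $\inte(X_i)$ --- cannot succeed in the problematic case. Since $D_i \subset \inte(X_i)$ and $D_{i+1} \subset \inte(X_{i+1})$, one has $f^k(D_i)\cap D_{i+1}\subset f^k(\inte(X_i))\cap \inte(X_{i+1})$ for every $k\geq 1$; hence the chain condition on the disks forces $\bigcup_{k\geq 1}f^k(\inte(X_i))\cap\inte(X_{i+1})\neq\emptyset$. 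But the hypothesis only provides $\bigcup_{k\geq 1}f^k(X_i)\cap X_{i+1}\neq\emptyset$, and in the very case you flag (where $p_i\in\partial X_i$ and $q_i=f^{k_i}(p_i)\in\partial X_{i+1}$) the open sets $f^{k_i}(\inte(X_i))$ and $\inte(X_{i+1})$ may accumulate at $q_i$ from opposite sides of a local separating arc and never meet. No shaping of one-sided neighborhoods inside the respective interiors can create an intersection that is not already there.

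What is missing is an argument that the open interiors themselves form a closed chain, or a way around such degenerate links. Possible routes: prove $\bigcup_{k\geq 1}f^k(\inte(X_i))\cap\inte(X_{i+1})\neq\emptyset$ from freeness together with the density of $\inte(X_i)$ in $X_i$ (which the arc-joining hypothesis does give you); or show that a degenerate link forces a periodic non-fixed point, in which case one surrounds its orbit by small free pairwise disjoint disks and gets recurrence directly; or pass to a different, shorter chain. This is precisely the content that separates the Guillou--Le Roux refinement from Franks' lemma itself, and without it the step you yourself call ``the principal obstacle'' remains open, so the proof as written is incomplete.
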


We deduce:
\begin{prop}\label{franksfino} Let $\cal D$ = $(V,E,B)$  be a free
brick decomposition of $\D \backslash \fix (f)$.  If there exists
$b\in B$ such that $b\in [b]_>$, then $f$ is recurrent.
 
\end{prop}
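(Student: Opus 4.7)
The plan is to reduce the conclusion directly to Proposition~\ref{guile} (Guillou--Le Roux's refinement of Franks' lemma). The hypothesis $b\in[b]_>$ is one of the six equivalent conditions listed in the paragraph preceding the definition of a chain, so it yields a closed chain of bricks $(b_i)_{i\in\Z/r\Z}$ with $b_0=b$, i.e.\ a family such that
$$\bigcup_{k\geq 1}f^k(b_i)\cap b_{i+1}\neq\emptyset\quad\text{for every }i\in\Z/r\Z.$$

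Next, I would reduce to the case where the $b_i$ are pairwise distinct. Whenever $b_i=b_j$ with $i<j$, either the subchain $(b_i,b_{i+1},\ldots,b_{j-1})$ or the complementary cyclic subchain $(b_0,\ldots,b_i,b_{j+1},\ldots,b_{r-1})$ is itself a closed chain of strictly smaller length. Iterating this shortening, I arrive at a closed chain of bricks $(b_i)_{i\in\Z/r\Z}$ in which all bricks are distinct (possibly with $r=1$, in which case $(b_0)$ with the self-loop $\bigcup_{k\geq 1}f^k(b_0)\cap b_0\neq\emptyset$ serves as the chain).

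Then I would check the hypotheses of Proposition~\ref{guile} for the family $(b_i)_{i\in\Z/r\Z}$, viewed as closed subsets of $\D\setminus\fix(f)$: each $b_i$ is free because $\cal D$ is a free decomposition; the interiors $\inte(b_i)$ are pairwise disjoint because distinct bricks are closures of distinct connected components of $M\setminus\Sigma(\cal D)$; and for any $z,z'\in b_i$ there is an arc $\gamma$ joining $z$ to $z'$ with $\gamma\setminus\{z,z'\}\subset\inte(b_i)$, obtained by first pushing each endpoint slightly into $\inte(b_i)$ (every boundary point of a brick admits interior points in any of its neighbourhoods, since a brick is the closure of an open connected 2-dimensional region) and then joining the two interior points inside the connected open set $\inte(b_i)$. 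An application of Proposition~\ref{guile} then gives that $f$ is recurrent.

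The main technical point to watch is the arc-joining property in the last step: one needs to know that $\inte(b_i)$ is connected and reaches every point of $\partial b_i$, which is built into the definition of a brick decomposition (the three-branches condition at singular points guarantees that bricks are regular closed sets with connected interior). Everything else is bookkeeping on the combinatorics of the chain.
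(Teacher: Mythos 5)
Your proof is correct and follows exactly the route the paper intends: the statement appears immediately after Proposition~\ref{guile} under the heading ``We deduce:'', so the intended argument is precisely to feed a closed chain of bricks into the Guillou--Le~Roux refinement. The paper leaves the details implicit; you have supplied the two points that actually need checking, namely the reduction to pairwise distinct bricks (so that the ``pairwise disjoint interiors'' hypothesis holds) and the verification that a brick satisfies the arc-joining condition, both of which you handle correctly.

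One small remark: you hedge with ``either the subchain $(b_i,\ldots,b_{j-1})$ or the complementary cyclic subchain'' is shorter, but in fact \emph{both} are closed chains of length strictly less than $r$ whenever $0<j-i<r$, so no choice is needed. Also, the regularity you invoke at the end (that a brick is the closure of its own interior, with connected interior) is indeed built into the definition via the local structure of $\Sigma(\mathcal{D})$ --- a line at regular points, a triple point at singularities --- so this part is sound as well.
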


\subsection{Previous results.}

Fix  $f\in \homeo ^+ (\D)$, different from the identity map and  {\it non-recurrent}. We will make use of the following two propositions from
\cite{patrice} (both of them depend on the non-recurrent character of $f$).  The first one (Proposition 2.2 in \cite{patrice})
 is a refinement of a result already appearing in \cite{sauzet}; the second one is Proposition 3.1 in \cite{patrice}.

\begin{prop}[\cite{sauzet},\cite{patrice}]\label{futcon}  Let $\cal D$ = $(V,E,B)$  be a
free maximal brick decomposition of $\D\backslash \fix (f)$.  Then,
 the sets $[b]_\geq$, $[b]_>$, $[b]_\leq$ and $[b]_<$ are connected.  In particular every connected component of an attractor
is an attractor, and  every connected component of a repeller is a repeller.

\end{prop}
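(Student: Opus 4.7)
The plan is first to reduce the four conclusions to the connectedness of $[b]_\geq$. Indeed $[b]_> = \varphi([b]_\geq)$, and $\varphi$ preserves connectedness (as noted after its definition in Section~\ref{bricks}), so connectedness of $[b]_>$ follows; the statements for $[b]_\leq$ and $[b]_<$ are obtained by applying the same arguments to $f^{-1}$. Once connectedness of $[b]_\geq$ is known for every brick $b$, the final sentence of the proposition is a short deduction: if $X$ is an attractor and $C \subset X$ is a connected component, then for each $b \in C$ the connected set $[b]_\geq$ is contained in $X$ (Remark~\ref{br}(\ref{br1})) and meets $C$ at $b$, hence $[b]_\geq \subset C$; therefore $\varphi(C) \subset \bigcup_{b \in C} [b]_\geq \subset C$.

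To prove $[b]_\geq$ connected I would argue by contradiction. Let $A$ be its connected component containing $b$ and suppose $A' := [b]_\geq \setminus A$ is non-empty. Since $\varphi(A)$ is connected and lies in $[b]_\geq$, it sits in a single component: $\varphi(A) \subset A$ would make $A$ an attractor containing $b$, so $[b]_\geq \subset A$ by Remark~\ref{br}(\ref{br1}), contradicting $A' \neq \emptyset$. Hence $\varphi(A) \subset A'$, and in particular $A$ is a free union of bricks. Maximality of $\mathcal{D}$ now forces $A$ to consist of a single brick: if $b_1, b_2 \in A$ were adjacent across an edge $e$, the merged brick $b_1 \cup e^\circ \cup b_2$ would still be free (as $A$ is free), so removing $e^\circ$ from $\Sigma(\mathcal{D})$ would yield a strictly smaller free subdecomposition, contradicting maximality. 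Thus $A = \{b\}$; applying the same merging argument to the connected component $A_1 \supset \varphi(\{b\})$ of $A'$ shows that either $A_1$ is also a single brick, or $\varphi(A_1) \subset A_1$ --- in which case $A_1$ is an attractor containing $\varphi(\{b\})$, so $A_1 = [b]_>$ and $[b]_\geq = \{b\} \sqcup [b]_>$ splits into exactly two components. In the iterated chain of single-brick components, any periodicity of the component containing $b$ gives some $\varphi^k(\{b\}) = \{b\}$, hence $b \in [b]_>$, contradicting Proposition~\ref{franksfino} via non-recurrence.

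The remaining cases --- a non-returning infinite chain of single-brick components, or the split $[b]_\geq = \{b\} \sqcup [b]_>$ --- I would dispatch by a path-crossing argument. Since $\{b\}$ and $A_1$ are disjoint closed subsets of $M$, maximality forces every brick adjacent to $b$ in $\mathcal{D}$ to lie in $[b]_<$: such a brick is in $[b]_>$ or $[b]_<$ by maximality, and placing it in $[b]_>$ would create an $M$-adjacency between $\{b\}$ and $A_1$, impossible. Joining $b$ to a brick of $\varphi(\{b\}) \subset A_1$ by an arc in their common $f$-invariant connected component of $M = \D \setminus \fix(f)$ and tracking its generic crossings of the skeleton yields a sequence of pairwise adjacent bricks $b = c_0, c_1, \ldots, c_n \in \varphi(\{b\})$ with $c_1 \in [b]_<$ and $c_n \in [b]_>$. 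At the first transition $c_{j-1} \notin [b]_\geq$, $c_j \in [b]_\geq$, Remark~\ref{br}(\ref{br1}--\ref{br2}) rules out $c_{j-1} \in [c_j]_>$ (which would place $c_{j-1} \in [b]_\geq$), so maximality gives $c_j \in [c_{j-1}]_>$. Combining this with the chain of forward $\varphi$-relations built up at the preceding transitions into and out of $[b]_<$, one splices together a closed chain of free, arc-connected subsets of $M$; Proposition~\ref{guile} (or directly a brick $b^\ast \in [b^\ast]_>$ contradicting Proposition~\ref{franksfino}) then contradicts the non-recurrence of $f$.

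The main obstacle is precisely this last splicing. Every time the arc crosses an edge of $\Sigma(\mathcal{D})$ between the regions $\{b\}$, $A_1$, $[b]_<$, and the ``outside'' $B \setminus ([b]_\geq \cup [b]_<)$, one must apply the maximality alternative and the attractor/repeller dichotomy of Remark~\ref{br} to pin down the direction of the induced $\varphi$-relation, and then check that the whole collection of these relations assembles into a genuine closed chain to which Proposition~\ref{franksfino} or~\ref{guile} applies.
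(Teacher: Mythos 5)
A point of comparison first: the paper does not actually prove Proposition~\ref{futcon}. It is imported as Proposition 2.2 of \cite{patrice} (itself refining a result of \cite{sauzet}) and used as a black box, so there is no in-paper proof to compare against; your argument has to stand on its own.

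Your reductions are correct: passing everything to the connectedness of $[b]_\geq$, and the deduction that components of attractors are attractors, are both fine. The observation that if the component $A$ of $b$ in $[b]_\geq$ satisfies $\varphi(A)\cap A=\emptyset$ in $B$ then $A$ is free, and that maximality then forces a connected free union of bricks to reduce to a single brick by merging along a shared edge, correctly extracts the real content of maximality in Sauzet's setting. But the last two paragraphs do not constitute a proof, and you say so yourself. Concretely: (i) to contradict non-recurrence you need some brick $b^\ast$ with $b^\ast\in[b^\ast]_>$, equivalently a cyclic sequence $c_0,\dots,c_r=c_0$ with $c_{j+1}\in[c_j]_>$ for every $j$; the arc from $b$ to $A_1$ only lets you control the direction of the $\varphi$-relation at a few privileged crossings (bricks edge-adjacent to $b$ lie in $[b]_<$, and the first entry into $[b]_\geq$ points forward), and nothing you derive forces the intermediate adjacency relations along the arc to all point forward --- a single backward relation breaks the cycle, and the maximality dichotomy by itself gives you no control over which way it goes in the region outside $[b]_\geq\cup[b]_<$; (ii) the ``iterated chain of single-brick components'' is not carried through: ruling out periodicity (correctly, via Proposition~\ref{franksfino}) does not dispose of the genuinely infinite non-returning case, which is deferred to the same unfinished arc argument, as is the subcase $[b]_\geq=\{b\}\sqcup[b]_>$. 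Until the splicing is made precise --- with the direction of every induced $\varphi$-relation along the arc pinned down and the resulting relations assembled into a bona fide closed chain --- this is a plan rather than a proof, which is essentially your own conclusion in your final paragraph.
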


\begin{prop}\cite{patrice}\label{gamak} If $f$  satisfies the
hypothesis of Theorem \ref{main*}, then for all $i\in \Z/n\Z$ we can find a sequence of arcs
$(\gamma _i^k)_{k\in\Z}$ such that:
\begin{enumerate}\item[$\bullet$] each $\gamma _i^k$ is a translation
arc from $f^k(z_i)$ to $f^{k+1}(z_i)$,
\item[$\bullet$] $f(\gamma _i^k)\cap \gamma _i^{k'} = \emptyset$ if
$k'<k$,
\item[$\bullet$] the sequence $(\gamma _i^k)_{k\leq 0}$ converges to
$\{\alpha _i\}$ in the Hausdorff topology,
\item[$\bullet$] the sequence $(\gamma _i^k)_{k\geq 0}$ converges to
$\{\om _i\}$ in the Hausdorff topology.
\end{enumerate}

\end{prop}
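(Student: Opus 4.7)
My plan is to argue by contradiction, assuming $f$ has no fixed point---the very conclusion the surrounding argument seeks---so that $\fix(f)=\emptyset$ and $f$ is automatically non-recurrent (otherwise Proposition \ref{franksfino} or Franks' lemma would produce a simple closed curve of index $1$, hence a fixed point). In this regime the whole brick-decomposition machinery of Section \ref{bricks} and both parts of Brouwer's translation lemma are available.

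Since $z_i\in\D\setminus\fix(f)=\D$, a translation arc $\gamma_i^0$ from $z_i$ to $f(z_i)$ exists. A natural first attempt is to set $\gamma_i^k:=f^k(\gamma_i^0)$, which is plainly a translation arc from $f^k(z_i)$ to $f^{k+1}(z_i)$. The required disjointness $f(\gamma_i^k)\cap\gamma_i^{k'}=\emptyset$ for $k'<k$ then reduces, after applying $f^{-k'}$, to
\[
 f^{m}(\gamma_i^0)\cap\gamma_i^0=\emptyset\qquad\text{for every }m\geq 2.
\]
Were this to fail for some such $m$, part $2$ of Brouwer's translation lemma would yield a simple closed curve of index $1$ and hence a fixed point, contradicting our hypothesis. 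So the disjointness is automatic under the naive iteration.

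The serious difficulty is the Hausdorff convergence $\gamma_i^k\to\{\alpha_i\}$ as $k\to-\infty$ and $\gamma_i^k\to\{\omega_i\}$ as $k\to+\infty$, which the iterates $f^k(\gamma_i^0)$ need not satisfy since they could stretch out. To repair this I would fix a maximal free brick decomposition of $\D$ and exploit the continuous extension of $f$ to $\alpha_i$, $\omega_i$: take nested neighborhoods $V_k$ of the appropriate boundary point in $\overline\D$ shrinking to it, and small enough that both endpoints $f^k(z_i), f^{k+1}(z_i)$ lie in $V_k\cap\D$ for $|k|$ sufficiently large. Inside each such free region one can redefine $\gamma_i^k$ as a translation arc supported in $V_k$; a finite surgery handles the bounded intermediate range of $k$. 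The distinctness of the points $\alpha_i,\omega_i$ ensures that the local modifications at different boundary points do not interfere once $|k|$ is large.

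The main obstacle is reconciling these boundary-localized modifications with the global translation-arc and disjointness properties: each local arc must still be a genuine translation arc (so that $f(\gamma_i^k)\cap\gamma_i^k$ is exactly one point), and the intersection constraint $f(\gamma_i^k)\cap\gamma_i^{k'}=\emptyset$ for $k'<k$ must survive the modification. For $|k|$ large the arcs lie in small pairwise-disjoint neighborhoods of the boundary points and the constraint holds trivially; for the bounded intermediate range, the same Brouwer-translation-lemma argument as above applies, now using the freeness of the brick decomposition to perform whatever perturbations are needed.
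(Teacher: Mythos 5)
Your opening observations are sound: iterating a single translation arc, $\gamma_i^k := f^k(\gamma_i^0)$, produces translation arcs with the required disjointness, and failure of $f^m(\gamma_i^0)\cap\gamma_i^0=\emptyset$ for $m\geq 2$ would contradict Brouwer's translation lemma (part 2). You also correctly identify the real difficulty, namely Hausdorff convergence to $\{\alpha_i\}$ and $\{\omega_i\}$. But the repair you sketch has genuine gaps that are exactly the content of the paper's proof.

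First, the neighborhoods $V_k$ of $\alpha_i$ or $\omega_i$ are \emph{not} free: $f$ extends continuously to $\omega_i$, so $f(V_k)$ is again a neighborhood of $\omega_i$ and meets $V_k$. Thus ``inside each such free region one can redefine $\gamma_i^k$ as a translation arc supported in $V_k$'' does not make sense as written; there is no freeness to exploit, and producing a translation arc with a prescribed endpoint inside a non-free region is precisely the nontrivial part. Second, and more seriously, the claim that for $|k|$ large ``the arcs lie in small pairwise-disjoint neighborhoods of the boundary points and the constraint holds trivially'' is false: all $\gamma_i^k$ with $k\gg 0$ accumulate on the \emph{same} point $\omega_i$, as do all $f(\gamma_i^k)$, so the condition $f(\gamma_i^k)\cap\gamma_i^{k'}=\emptyset$ for $k'<k$ is a genuine constraint between arcs crowding the same corner of the disk, not something forced by separation of the $\alpha$'s and $\omega$'s. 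What the paper actually does is construct, via a boundary-adapted coordinate system $h:\D\to(-1,1)^2$ in which the orbit is monotone in the vertical coordinate, a sequence of \emph{pairwise disjoint} boxes $U'^k$ shrinking to $\alpha$ and $\omega$, then free arcs $\gamma'^k\subset U'^k$ meeting their image, then pairwise disjoint free disks $D'^k$, and finally (using non-recurrence crucially, to rule out the forward orbit of $\gamma_-^k\gamma_+^k$ returning to an earlier $\gamma_-^{k'}\gamma_+^{k'}$) disks $D^k$ satisfying $f(D^k)\cap D^{k'}=f^2(D^k)\cap D^{k'}=\emptyset$ for $k'<k$. It is this last quantitative disjointness, not nesting of neighborhoods, that makes the translation arcs $\gamma^k\subset D^k\cup f(D^k)$ satisfy the required intersection property. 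Your proposal does not contain a substitute for this chain of constructions, so the proof is incomplete at its central step.
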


This result is  a consequence of
Brouwer's translation lemma and the hypothesis on the orbits of the
points $(z_i)_{i\in\Z/n\Z}$. In particular,   the
extension hypothesis of Theorem \ref{main*} is used. It
allows us to construct a particular
brick decomposition suitable for our purposes: \\

\begin{lema}\label{ends} For every $i\in \Z/n\Z$, take $U_i^-$ a neighbourhood of $\alpha _i$ in $\overline \D$ and 
$U_i^+$ a neighbourhood of $\om _i$ in $\overline \D$ such that $U_i^-\cap U_i^+ = \emptyset$.
There exists two families $(b_i'^l)_{i\in\Z/n\Z,l\geq 1}$ and
$(b_i'^l)_{i\in\Z/n\Z,l\leq -1}$ of closed disks in $\D$, and a family of integers $(l_i)_{i\in \Z/n\Z}$ such that:
\begin{enumerate}\item each $b_i'^l$ is free and contained in $U_i^-$
($l\leq-1$) or in $U_i^+$ ($l\geq 1$),
\item $\inte (b_i'^l) \cap \inte (b_i'^{l'}) = \emptyset$, if $l \neq l'$
,
\item for every $k>1$ the sets $(b_i'^l)_{1\leq l \leq k}$ and
$(b_i'^l)_{-k \leq l \leq -1 }$ are connected, 

\item for all $i\in\Z/n\Z$, $\partial \cup_{l\in \Z\backslash \{0\}} b_i'^l$ is a one dimensional submanifold,
\item if $x\in \D$, then $x$ belongs to at most two different disks in the family $(b_i'^l)_{l\in \Z\backslash \{0\}}$,
$i\in\Z/n\Z$,
\item\label{pto1} for all  $i\in \Z/n\Z$  $f^{l_i + l} (z_i) \in \inte(b_i'^{l+1}) $ for all $l\geq 0$,
and  $f^{-l_i-l} (z_i) \in
\inte(b_i'^{-l-1}) $ for all $l\geq 0$,
\item $f^k(z_j) \in b_i'^l$ if and only if $j=i$ and $k = l_i+l-1$,
\item the sequence $(b_i'^l)_{l \geq 1}$ converges to  $\{\om _i\}$ in the Hausdorff topology
and the sequence $(b_i'^l)_{l \leq -1}$ converges to $\{\alpha _i\}$ in the Hausdorff topology.

\end{enumerate}

\end{lema}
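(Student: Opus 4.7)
\textbf{Proof plan for Lemma \ref{ends}.}

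The plan is to thicken the translation arcs furnished by Proposition \ref{gamak} into free closed disks, chained consecutively and shrinking toward the boundary points. First I would shrink the given neighborhoods $U_i^{\pm}$ so that the whole family $\{U_i^{-},U_i^{+}:i\in\Z/n\Z\}$ is pairwise disjoint; this is allowed because the $2n$ points $\alpha_i,\om_i$ are pairwise distinct. Then, using the Hausdorff convergence in Proposition \ref{gamak}, I would pick an integer $l_i$ large enough that $\gamma_i^{k}\subset U_i^{+}$ for all $k\geq l_i-1$ and $\gamma_i^{-k}\subset U_i^{-}$ for all $k\geq l_i$, so every arc to be used is confined in the correct neighborhood.

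Next, for every $i$ and every $k\geq l_i$, I would build a small closed topological disk $D_k$ around the orbit point $f^{k-1}(z_i)$ (and analogously on the negative side). Since $f$ is non-recurrent, it is fixed point free (otherwise the Brouwer corollary would contradict non-recurrence), so by continuity of $f$ one can take $D_k$ small enough to be free. The orbit $(f^{k}(z_i))_k$ accumulates only on $\partial\D$ inside each $U_i^{\pm}$, hence one can shrink the $D_k$'s so that each one contains a single orbit point in its interior and meets no other. Shrinking the diameter to zero as $|k|\to\infty$ yields the Hausdorff convergence required in item 8.

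To secure item 3 (connectedness of consecutive families) and item 4 (the boundary of the union is a $1$-manifold) I would arrange that $D_k\cap D_{k+1}$ is a single arc, transverse to the translation arc $\gamma_i^{k-1}$ joining $f^{k-1}(z_i)$ to $f^{k}(z_i)$. Concretely, take a thin regular tubular neighborhood $T$ of $\gamma_i^{k-1}$, cut it by a small arc $\beta$ crossing $\gamma_i^{k-1}$ at a single point between the two endpoints, and append to each $D_k,D_{k+1}$ the half of $T$ containing the corresponding orbit point; the common arc $\beta$ becomes the shared boundary. Because $\gamma_i^{k-1}$ is a translation arc, a sufficiently thin such tube is free, and after this adjustment $D_k$ remains free. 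The property $f(\gamma_i^{k})\cap \gamma_i^{k'}=\emptyset$ for $k'<k$ from Proposition \ref{gamak} lets me keep the tubes for different $k$ disjoint in the future direction, so that items 2 and 5 (disjoint interiors, each point in at most two disks) follow after a further shrinking, and in particular $D_k\cap D_{k'}=\emptyset$ whenever $|k-k'|\geq 2$. Setting $b_i'^{l}=D_{l_i+l-1}$ for $l\geq 1$ and the symmetric definition for $l\leq -1$ fulfills items 6 and 7 by construction.

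The main obstacle is item 4 together with the freeness: one has to thicken the translation arcs finely enough that every individual $b_i'^{l}$ is free and that the seams $b_i'^{l}\cap b_i'^{l+1}$ are clean one-dimensional arcs rather than more complicated intersections, while simultaneously keeping the union from accidentally swallowing neighbouring orbit points (this is where item 7 can fail if the disks are not chosen small enough). Uniform transversal control of the tubes around the translation arcs, combined with the asymptotic property that the $\gamma_i^{k}$ collapse to $\om_i$, allows a diagonal shrinking procedure that achieves all requirements simultaneously.
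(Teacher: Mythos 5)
Your outline follows the paper's general idea---thickening the translation arcs from Proposition \ref{gamak} into a chain of free closed disks shrinking toward $\alpha_i$ and $\om_i$---but it skips the step the paper explicitly flags as essential: the concatenated curve $\prod_k\gamma_i^k$ (restricted to $U_i^\pm$) may have \emph{loops}, and one must first delete them to obtain an embedded tree $T_i^\pm$ before any thickening takes place.

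The issue is that Proposition \ref{gamak} only furnishes $f(\gamma_i^k)\cap\gamma_i^{k'}=\emptyset$ for $k'<k$; it does \emph{not} assert that the arcs $\gamma_i^k$ are pairwise disjoint. Two arcs $\gamma_i^k$ and $\gamma_i^{k'}$ with $|k-k'|\geq 2$ (hence with disjoint endpoint pairs) may cross transversally at some interior point $p$. In that situation every tubular neighbourhood of $\gamma_i^{k}$, however thin, contains a small disk around $p$, and so does every tubular neighbourhood of $\gamma_i^{k'}$; once you cut these tubes and glue halves onto the $D_m$'s, at least two of the resulting closed disks will have overlapping interiors near $p$. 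No diagonal shrinking of diameters can erase a genuine crossing of the spines, because the crossing point belongs to both arcs. This breaks item 2 (pairwise disjoint interiors) and item 5 (each point of $\D$ in at most two disks), and it also makes $\partial\bigcup_l b_i'^l$ fail to be a $1$-manifold near $p$, breaking item 4. Your own remark that item 4 ``together with the freeness'' is the main obstacle is on target, but the proposed fix does not address intersecting spines.

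The paper's remedy is precisely to replace $\prod_k\gamma_i^k\cap U_i^\pm$ by the tree obtained after loop deletion and then thicken that tree; loop deletion removes exactly the bad self-intersections while retaining the chain linking consecutive orbit points, after which the thickened tree decomposes into closed disks with the required incidence structure essentially for free. Your tube-cutting construction would be correct if you first performed this reduction and thickened the subarcs of the tree instead of the raw $\gamma_i^k$'s; as written, the argument has a genuine gap.
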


The idea is to construct  trees $T_i^-\subset U_i^-, T_i^+ \subset U_i^+$, $i\in \Z/n\Z$ by deleting the loops of the curves 
$\prod _{k\geq -1} \gamma _i^k\cap U_i^-$ and 
 $\prod _{k\leq 1} \gamma _i^k\cap U_i^+$ respectively, and then
thickening these trees to obtain the families $(b_i'^l)_{i\in\Z/n\Z,l\geq 1}$
and $(b_i'^l)_{i\in\Z/n\Z,l\leq -1}$. We have illustrated these families in Figure 5. 
Given the centrality of this lemma to this paper, a proof will be included
at the end of this section.  We remark, however that these results are contained in \cite{patrice}.

\begin{figure}[h]\label{fliabil}
\begin{center}
\psfrag{a}{$b_3'^{-l}$}\psfrag{b}{$b_1'^l$}\psfrag{c}{$b_0'^l$}\psfrag{d}{$b_2'^{-l}$}\psfrag{e}{$b_1'^{-l}$}\psfrag{f}{$b_3'^l$}
\psfrag{g}{$b_2'^{l}$}
\psfrag{h}{$b_0'^{-l}$}\psfrag{i}{$\alpha_3$}\psfrag{j}{$\om_1$}\psfrag{k}{$\om_0$}\psfrag{l}{$\alpha_2$}\psfrag{m}{$\alpha_1$}
\psfrag{n}{$\om_3$}\psfrag{o}{$\om_2$}\psfrag{p}{$\alpha_0$}
\includegraphics[scale=0.6]{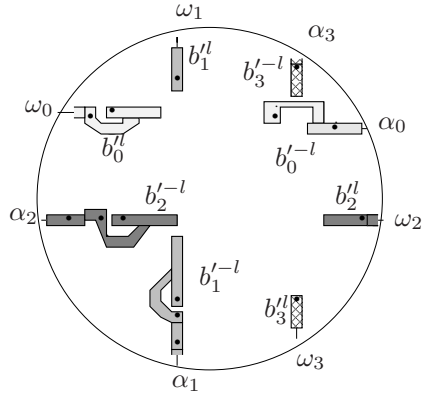}
\caption{The families $b_i'^l$}
\end{center}
\end{figure}

\begin{obs}\label{puntas}  The fact that the sequence $(b_i'^l)_{l\geq 1}$ converges in the 
Hausdorff topology to $\om _i$, implies that we can find an 
arc  $\Gamma_i ^+ : [0,1]\to \inte (\cup _{l\geq 0 } 
b_i'^{l}) \cup \{\om _i\}$ such that  $\Gamma_i ^+(1) = \om _i$, $i\in \Z/n\Z$.  Similarly, we can find an arc
$\Gamma_i ^- : [0,1]\to \inte (\cup _{l\geq 0 } b_i'^{-l}) \cup \{\alpha _i\}$ such that $\Gamma_i ^-(1) = \alpha _i$, 
$i\in\Z/n\Z$.
\end{obs}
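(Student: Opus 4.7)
The plan is to construct $\Gamma_i^+$ by concatenating short arcs that travel from one brick to the next, extending continuously to $\om_i$ at time $1$, and then extracting an injective subarc. The argument for $\Gamma_i^-$ is completely analogous, with time reversed.

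Write $B_i = \cup_{l\geq 1} b_i'^l$. The first step is a local structural observation. By property~(2) the disks have pairwise disjoint interiors, and by property~(5) at most two of them meet at any point. If two disks $b_i'^l$ and $b_i'^m$ met only at an isolated boundary point, the boundary of their union would present a figure-eight singularity there, contradicting property~(4), which requires $\partial \bigl(\cup_{l \in \Z \setminus \{0\}} b_i'^l\bigr)$ to be a one-dimensional topological submanifold. Therefore, whenever two bricks in the family intersect, they share an arc of common boundary, and the relative interior of this arc lies in $\inte(B_i)$. This provides open ``passages'' by which an arc may pass between the interiors of two intersecting bricks while remaining inside $\inte(B_i)$.

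Now I combine this with properties~(3) and~(8). Property~(3) ensures that each finite initial segment $\cup_{1 \leq l \leq k} b_i'^l$ is connected, so every $b_i'^k$ meets at least one earlier brick; one may therefore inductively build a connecting path in $\inte(B_i)$ through a chain of these passages, starting at some base point $p_1 \in \inte(b_i'^1)$ and visiting a point in every $\inte(b_i'^l)$. By property~(8), the diameters of the bricks $b_i'^l$ go to zero as $l \to \infty$, so with an appropriate time reparametrization (arranging the visit to the $l$-th brick on a suitable subinterval of $[0,1)$ so that the image of a neighborhood of $1$ is trapped in bricks of arbitrarily large index), the concatenation yields a continuous map $\beta : [0,1] \to \inte(B_i) \cup \{\om_i\}$ with $\beta(1) = \om_i$.

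The map $\beta$ need not be injective, but by the classical theorem that every continuous path between two distinct points of a Hausdorff space contains a topologically embedded simple arc with the same endpoints, one extracts the desired $\Gamma_i^+$, which satisfies $\Gamma_i^+(1) = \om_i$ and whose image lies in $\inte(B_i) \cup \{\om_i\}$. I expect the only real difficulty to lie in the structural step: the upgrade from ``two bricks meet'' to ``two bricks share a boundary arc with interior inside $\inte(B_i)$'' is what permits the connecting arcs to stay in the interior of the union, and this requires property~(4) in addition to mere connectedness of the chain.
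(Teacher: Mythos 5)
The paper does not actually prove this Remark: it treats it as evident from the construction sketched just after Lemma \ref{ends}, where the disks $b_i'^l$ are obtained by thickening successive segments of a tree $T_i^+$ running out to $\om_i$, so that a ray of $T_i^+$, pushed slightly into the interior of the thickening, furnishes $\Gamma_i^+$ directly. Your attempt to derive the Remark purely from the listed properties (1)--(8) is therefore a genuinely different route, and your structural step is essentially the right observation: properties (2), (4), (5) do force any two intersecting disks of the family to share a boundary arc whose relative interior lies in $\inte(B_i)$. (To make the figure-eight argument airtight one should rule out not only isolated intersection points but arbitrary totally disconnected intersections; the same mechanism works, since near any common point both boundary circles of the two disks would have to lie entirely in $\partial B_i$, which, being a $1$-manifold, must then locally coincide with each of them, producing a nondegenerate common boundary arc whose interior is swallowed by $\inte(B_i)$.)

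The gap is in the convergence step, which you flagged as the easy part. Property (3) only makes the initial segments $\cup_{1\le l\le k}b_i'^l$ connected; it does \emph{not} say $b_i'^k$ is adjacent to $b_i'^{k+1}$. So the connecting path from $p_k\in\inte b_i'^k$ to $p_{k+1}\in\inte b_i'^{k+1}$ may be forced to backtrack through low-index disks, whose diameters are bounded below, and then no reparametrization makes the image of a neighbourhood of $1$ lie in disks of arbitrarily large index: $\beta$ fails to be continuous at $1$. Indeed, demanding that the path visit \emph{every} disk is in general incompatible with convergence --- if the adjacency graph were an infinite binary tree, any such path would have to return to bounded-depth vertices infinitely often. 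The correct move is to extract an infinite simple ray rather than visit every disk. The adjacency graph on $\{1,2,\ldots\}$ is connected by (3) and locally finite, since each $b_i'^l$ is compact in $\D$, hence at positive distance from $\om_i$, hence by (8) disjoint from all sufficiently high-index disks; by K\"onig's lemma it contains an infinite simple ray $l_0=1, l_1, l_2,\ldots$, and since the $l_j$ are distinct one has $l_j\to\infty$ and $b_i'^{l_j}\to\{\om_i\}$. Running your concatenation and reparametrization along this ray does yield a continuous $\beta$ with $\beta(1)=\om_i$, and your arc-extraction step then finishes. (In the paper's actual construction one needs no K\"onig: the translation arc $\gamma_i^{l_i+l-1}$ joins $f^{l_i+l-1}(z_i)\in\inte b_i'^l$ to $f^{l_i+l}(z_i)\in\inte b_i'^{l+1}$, so consecutive disks are adjacent --- but this is not among properties (1)--(8), which is precisely why the abstract derivation needs the extra step.)
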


\begin{obs}\label{ptsdf} If the points $\alpha _i, \om_i$, $i\in \Z/n\Z$, are all different, the bricks $b_i'^l$, $i\in \Z/n\Z$, 
$l\in \Z\backslash\{0\}$ can be constructed as to have pairwise disjoint interiors.
 
\end{obs}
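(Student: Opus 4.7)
The plan is to revisit the construction outlined for Lemma~\ref{ends} and exploit the hypothesis that the $2n$ endpoints $\{\alpha_i,\om_i\}_{i\in\Z/n\Z}$ are distinct points of $\partial\D$.

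First, I would observe that distinctness of the endpoints, together with the Hausdorff property of $\overline\D$, allows us to choose the $2n$ neighbourhoods $U_i^-$ of $\alpha_i$ and $U_i^+$ of $\om_i$ to be \emph{pairwise disjoint}, rather than merely satisfying $U_i^-\cap U_i^+=\emptyset$ as required by Lemma~\ref{ends}. I would then rerun the construction of Lemma~\ref{ends} with this sharper choice of neighbourhoods. By construction, the trees $T_i^-\subset U_i^-$ and $T_i^+\subset U_i^+$ obtained by removing the loops of the curves $\prod_{k\geq -1}\gamma_i^k\cap U_i^-$ and $\prod_{k\leq 1}\gamma_i^k\cap U_i^+$ live in pairwise disjoint open sets, and the same is true for their thickenings once the thickening is taken sufficiently thin that it remains inside the ambient neighbourhood.

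Second, I would verify that all enumerated conclusions of Lemma~\ref{ends} still hold for this modified construction; only the thickness of the thickening is constrained, not its combinatorial structure, so properties (1)--(8) are preserved without change. In particular condition (1) is immediate since each brick still lies in the appropriate $U_i^\pm$, condition (2) (disjointness of interiors within a single family) follows exactly as before, and the Hausdorff convergence in (8) persists since shrinking the thickening only makes convergence easier.

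Finally, the pairwise disjointness of interiors across \emph{different} families $(i,\mathrm{sign}(l))\ne (j,\mathrm{sign}(l'))$ is now automatic: the two bricks $b_i'^l$ and $b_j'^{l'}$ sit inside two disjoint members of the family $\{U_i^\pm\}$, so their closures are disjoint, a fortiori their interiors. Combined with condition (2) of Lemma~\ref{ends} inside each family, this gives disjointness of interiors throughout the whole collection. As a bonus, property (5) of Lemma~\ref{ends} is trivially reinforced, since each point of $\D$ now belongs to at most two bricks of a single family, never to bricks of different families. The main ``obstacle'' is essentially bookkeeping: making sure the thickening step in the proof of Lemma~\ref{ends} can indeed be carried out inside a prescribed open neighbourhood, which is clear because the trees $T_i^\pm$ are compact subsets of the open sets $U_i^\pm$.
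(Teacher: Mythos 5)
Your argument is correct and is precisely the reasoning the paper leaves implicit: Lemma~\ref{ends} holds for any admissible choice of neighbourhoods $U_i^\pm$, and since the $2n$ points $\alpha_i,\om_i$ are distinct one may take those $2n$ neighbourhoods pairwise disjoint, whereupon conclusion~(1) ($b_i'^l\subset U_i^\pm$) gives disjointness of bricks belonging to distinct families indexed by $(i,\mathrm{sign}(l))$, while conclusion~(2) handles bricks within a single family. The only minor redundancy is the passage where you ``re-verify'' conclusions (1)--(8): the lemma already asserts them for any choice of $U_i^\pm$ satisfying $U_i^-\cap U_i^+=\emptyset$, and your sharper pairwise-disjoint choice is such a choice, so no re-verification is needed.
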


\begin{cor}\label{bon} If the points $\alpha _i, \om_i$, $i\in \Z/n\Z$, are all different, there exists a free brick 
decomposition $(V,E,B)$ of $\D\backslash \fix(f)$ such that for all $i\in\Z/n\Z$ and all $l\in\Z\backslash\{0\}$, there exists
$b_i^l\in B$ such that $b_i'^l\subset b_i^l$.
 
\end{cor}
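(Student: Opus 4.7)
The plan is to complete the family $(b_i'^l)$ into a free brick decomposition of $\D\backslash\fix(f)$, using the disks themselves as bricks. First I would invoke Remark \ref{ptsdf} to fix a realization of Lemma \ref{ends} with pairwise disjoint interiors, and set $K := \bigcup_{i,l} b_i'^l$. Item 4 of the lemma guarantees that $\partial K$ is a $1$-dimensional submanifold of $\D$ away from the isolated triple points allowed by item 5, so $\partial K$ already has the local structure of a piece of the skeleton of a brick decomposition. Each $b_i'^l$ is free by item 1 of the lemma, so they are natural candidates for bricks.

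Next I would extend $\partial K$ to the full skeleton of a free brick decomposition of $\D\backslash\fix(f)$ by constructing a compatible skeleton on the surface-with-boundary $S := \overline{\D\backslash(\fix(f)\cup\inte K)}$. Since $f$ has no fixed points on $S$, every point of $S$ admits a small closed free topological disk neighborhood. Using paracompactness of $S$, I would cover $S$ by a locally finite collection of such disks arranged in general position with $\partial K$ and with one another. The union of the boundaries of these disks together with $\partial K$ forms a $1$-dimensional singular submanifold $\Sigma$, and after a small perturbation near any vertices of valence different from three, $\Sigma$ becomes the skeleton of a brick decomposition of $\D\backslash\fix(f)$. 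Every complementary region of $\Sigma$ is either one of the $b_i'^l$ or is contained in one of the covering disks, so all bricks are free.

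Since $\Sigma$ contains $\partial K$ and is disjoint from $\inte(b_i'^l)$, each $b_i'^l$ is itself a brick of the resulting decomposition $(V,E,B)$, so setting $b_i^l := b_i'^l$ gives $b_i'^l \subset b_i^l \in B$, as required.

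The main obstacle is the technical construction of the cover of $S$: one must simultaneously achieve local finiteness, general position with $\partial K$, trivalence after perturbation, and freeness of each disk in the cover. This is however entirely standard surface topology and is essentially the same construction used in \cite{sauzet} to produce free brick decompositions in the fixed-point-free case, here adapted to a surface with boundary whose boundary arcs are themselves contained in free sets.
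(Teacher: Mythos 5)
Your overall strategy is the right one and agrees with what the paper implicitly has in mind: start from the given free disks, build a skeleton containing their boundaries, and fill in the rest of $\D\backslash\fix(f)$ with a locally finite cover by small free disks in general position, as in Sauzet. The paper does not write out a proof of this corollary, but this is clearly the intended route.

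There is, however, a concrete slip in the way you specify the skeleton. You set $K=\bigcup_{i,l}b_i'^l$ and take $\Sigma$ to be $\partial K$ together with the boundaries of the covering disks. But $\partial K$ is the topological boundary of the \emph{union}, and this does \emph{not} contain the interface arcs $\partial b_i'^l\cap\partial b_i'^{l+1}$: an interior point of such a shared arc has neighborhoods contained in $b_i'^l\cup b_i'^{l+1}\subset K$, hence lies in $\inte(K)$, not in $\partial K$. With the skeleton you describe, the complementary region containing $\inte(b_i'^l)$ also contains $\inte(b_i'^{l+1})$ (item 3 of Lemma~\ref{ends} makes $\bigcup_{l\geq 1}b_i'^l$ connected), so the resulting ``brick'' would be the whole chain $\bigcup_{l\geq 1}b_i'^l$. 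By item~\ref{pto1}/item 7 of Lemma~\ref{ends} this chain contains consecutive orbit points $f^{l_i+l-1}(z_i)$ and $f^{l_i+l}(z_i)$, so it is not free, and the construction fails. The skeleton must contain all of $\bigcup_{i,l}\partial b_i'^l$, not just $\partial K$; items 2, 4 and 5 of Lemma~\ref{ends} then ensure the singular points (the endpoints of the shared arcs) are trivalent. A further small point: item 5 forbids triple overlaps among the disks --- at most two disks through any point --- so the trivalent vertices you need do not come from ``triple points allowed by item 5'' but precisely from the endpoints of the interfaces between two overlapping disks.

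With $\Sigma\supset\bigcup_{i,l}\partial b_i'^l$ the rest of your argument (covering $S=\overline{\D\backslash(\fix(f)\cup\inte K)}$ by small free disks, general position, local perturbation to trivalence) goes through and yields the free decomposition with $b_i^l=b_i'^l$, which is stronger than the stated inclusion $b_i'^l\subset b_i^l$.
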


We will make use of proposition \ref{futcon} in the next section.  Propositions \ref{gamak} and 
\ref{ends} will not be used until section \ref{proof}.  

We finish this section with a proof of Proposition \ref{gamak}, due to Le Calvez \cite{patrice}.  Fix $f\in \homeo^+(\D)$ 
satisfying the hypothesis of Theorem \ref{main*}.  Let
$O_i$ be the orbit of $z_i$, and $z_i^k = f^k(z_i)$, $k\in\Z$.  We will need the three following lemmas, where
we will omit the index $i$ for simplicity.

\begin{lema} There exists a sequence of pairwise disjoint arcs $(\gamma '^{k})_{k\in\Z}$ such that:

\begin{itemize}
 \item $z^k\in \gamma'^k$;
\item $\gamma'^k \cap \fix(f) = \emptyset$:
\item $f(\gamma'^k)\cap \gamma'^k \neq \emptyset$;
\item the sequence $(\gamma '^{k})_{k\leq 0}$ converges to $\{\alpha\}$;
\item the sequence $(\gamma '^{k})_{k\geq 0}$ converges to $\{\om\}$.
\end{itemize}

\end{lema}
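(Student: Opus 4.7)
Following the Brouwer-theoretic approach of Le Calvez, I would construct the arcs in three movements: produce candidate translation arcs at every orbit point, localize those at large $|k|$ near the endpoints using the extension hypothesis, and then disjointify the finitely many central arcs that remain. The standing hypothesis of this section is that $f$ is non-recurrent, so the Corollary to Brouwer's translation lemma in the excerpt ensures $\per(f)=\fix(f)$. For each $k\in\Z$ the point $z^k$ is not fixed (its backward and forward limits $\alpha\neq\omega$ are distinct), so the Brouwer-theoretic fact recalled in the excerpt --- every non-fixed point lies on a translation arc --- yields a translation arc $\tilde\gamma^k$ from $z^k$ to $z^{k+1}$, automatically disjoint from $\fix(f)$ and satisfying $f(\tilde\gamma^k)\cap\tilde\gamma^k=\{z^{k+1}\}\neq\emptyset$ (because $z^k$ is not a non-fixed periodic point, the two alternatives of Brouwer's translation lemma force this intersection formula).

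Next I would exploit the extension hypothesis: $f$ is continuous at the boundary points $\omega$ and $\alpha$, which are fixed by the extension since they are limits of convergent orbits. Given shrinking neighborhoods $U^+$ of $\omega$ and $U^-$ of $\alpha$ in $\overline\D$, for all $|k|$ large enough the consecutive orbit points $z^k$ and $z^{k+1}$ lie in the appropriate $U^{\pm}$; by producing $\tilde\gamma^k$ inside the component of $U^{\pm}\setminus\fix(f)$ containing them --- using local Brouwer coordinates in which $f$ is conjugate to a planar translation --- one obtains $\tilde\gamma^k\to\{\omega\}$ in the Hausdorff topology as $k\to+\infty$, and symmetrically $\tilde\gamma^k\to\{\alpha\}$ as $k\to-\infty$.

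The main obstacle is disjointification. The convergences above imply that for some $N$ the tails $(\tilde\gamma^k)_{|k|>N}$ fit in small disjoint neighborhoods of $\omega$ and $\alpha$; a further refinement makes these tails pairwise disjoint among themselves and disjoint from the central block $(\tilde\gamma^k)_{|k|\le N}$. Only the finite central block can still have crossings, which I would remove by loop deletion. The subtlety is that adjacent arcs $\tilde\gamma^k$ and $\tilde\gamma^{k+1}$ share the endpoint $z^{k+1}$, and this very endpoint is precisely where $f(\tilde\gamma^k)$ meets $\tilde\gamma^k$; a naive truncation would destroy the intersection condition required by the lemma. The resolution is to work in a Brouwer chart around each $z^k$ in which $f$ is conjugate to a translation and to replace each offending arc by a nearby translation arc whose forward image still crosses it transversally in the interior while separating it from its neighbors; performing this modification on the finitely many arcs of the central block and declaring the resulting pairwise disjoint family to be the desired $(\gamma'^{k})_{k\in\Z}$ completes the construction.
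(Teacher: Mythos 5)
Your proposal diverges from the paper's approach and has a real gap. You read the lemma as asking for translation arcs joining $z^k$ to $z^{k+1}$, which creates the shared-endpoint problem you then spend most of the proposal trying to fix. But the lemma asks for much less: the arc $\gamma'^k$ only needs to \emph{contain} $z^k$ (not join it to $z^{k+1}$), to avoid $\fix(f)$, and to satisfy the weak non-freeness condition $f(\gamma'^k)\cap\gamma'^k\neq\emptyset$. Because you insist on translation arcs between consecutive orbit points, you manufacture a hard disjointification problem and then wave it away: ``replace each offending arc by a nearby translation arc whose forward image still crosses it transversally in the interior while separating it from its neighbors'' is not a construction. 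There is no global Brouwer chart in which $f$ is conjugate to a translation (there may be fixed points, and Brouwer's theorem is local in any case), and it is not explained how to carry out these replacements coherently for the whole finite block without spoiling earlier arcs.

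The paper avoids disjointification altogether by building the arcs inside pairwise-disjoint ``boxes'' from the start. One chooses a homeomorphism $h:\D\to(-1,1)^2$ under which $p_1$ is injective on the orbit $O$, $p_2$ is strictly increasing along $(z^k)_k$, and $p_1\to\mp1$ corresponds to convergence to $\alpha,\omega$. The bands $U^k$ between consecutive heights $p_2(z^k)$ and $p_2(z^{k+1})$ are then pairwise disjoint. One picks auxiliary points $z'^k\in U^k$ in generic position, and the extension hypothesis at $\alpha,\omega$ is used precisely to guarantee that one can fatten each band horizontally to a box $U'^k$ that contains \emph{both} $z'^k$ and $f(z'^k)$, while the closed boxes $\overline{U'^k}$ still Hausdorff-converge to $\{\alpha\}$ (as $k\to-\infty$) and $\{\omega\}$ (as $k\to+\infty$). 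Each arc $\gamma'^k$ is then drawn inside $U'^k\cup\{z^k\}$, with one endpoint at $z^k$, passing through $z'^k$ and $f(z'^k)$ (which gives $f(\gamma'^k)\cap\gamma'^k\neq\emptyset$ for free), and dodging $\fix(f)$; the case of infinitely many fixed points in $U'^k$ requires a separate argument via three arcs to fixed points and orientation preservation, which your proposal does not anticipate. Disjointness of the $\gamma'^k$ is then automatic from disjointness of the boxes.

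Finally, two smaller points: this lemma does \emph{not} assume non-recurrence (that hypothesis enters only in the third sublemma and in the proof of Proposition~\ref{gamak} itself), so invoking $\per(f)=\fix(f)$ here is unnecessary; and note that the hard content of the paper's proof --- constructing $h$ and invoking the extension at $\alpha,\omega$ to control the boxes --- is exactly where the extension hypothesis of Theorem~\ref{main} is used, which your proposal gestures at but does not actually carry out.
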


\begin{proof} We can construct a homeomorphism $h: \D\to (-1,1)^2$ such that:

\begin{itemize}
 \item $\lim_l p_1(x_l) =-1 \Leftrightarrow \lim _l h^{-1}(x_l) =\alpha$; 
\item $\lim_l p_1(x_l) = 1 \Leftrightarrow \lim _l h^{-1}(x_l) =\om$, where $p_1, p_2$ 
are the projections to the horizontal and vertical coordinates;
\item $p_1$ is injective on $h(O)$, where $O$ is the orbit of $z$;
\item the sequence $p_2(h(z^k))_{k\in\Z}$ is increasing.
\end{itemize}

Indeed, it is easy to construct a homeomorphism $h': \D\to (-1,1)^2$ satisfying the two first items. As
$$\lim_{k\to -\infty} z^k = \{\alpha\}, \lim_{k\to \infty} z^k = \{\om\}, $$ \noindent for any $k$ there is only
a finite number of points in $h'(O)$ in the same vertical line that $h'(z^k)$, and so we can perturb $h'$ in a homeomorphism
$h''$ satisfying the three first items.  Once one has injectivity of $p_1$ on $h''(O)$, we can compose $h''$ with a 
homeomorphism fixing each vertical line to have $p_2$ increasing on $h''(O)$.

For simplicity, we will no longer write $h$;  we will suppose that $p_1$ and $p_2$ 
are defined in $\D$.
Let $I_k$ be the open interval of $(-1,1)$ delimited by $p_1(z^k)$ and $p_1(z^{k+1})$, and 
$U^k = I_k \times (p_2(z^k), p_2 (z^{k+1}))$.  Then, $(\overline{U^k})_{k\leq 0}$
and $(\overline{U^k})_{k\leq 0}$ are sequences of closed disks in $\D$ converging
respectively to $\{\alpha\}$ and $\{\om\}$.

We can pick a point $z'^0\in U^0$ such that $f(z'^0)$ does not belong to the same
vertical line that any of the points in $O$.  We can also pick a point 
$z'^1\in U^1$ such that $f(z'^1)$ does not belong to the same
vertical line that any of the points in $O$, and such that  $f(z'^1)$ does not belong
either to the same vertical line as $z'^0$, or the image of this vertical line or its preimage.
We can define inductively a sequence $(z'^k)_{k\in\Z}$ such that:

\begin{itemize}
 \item $z'^k \in U^k$;
\item $p_1(f(z'^k))\neq p_1(z^{k'})$ for all $k,k'\in \Z$;
\item $p_1(z'^{k})\neq p_1(z'^{k'})$ if $k\neq k'$;
\item  $p_1(f(z'^k))\neq p_1(z'^{k'})$ if $k\neq k'$.
\end{itemize}

So, we can modify $h$ (by composition with a homeomorphism fixing each vertical line, as we did before) so as to have 
$p_2(z^k)<p_2(f(z'^k))<p_2(z^{k+1})$. 

{\it The arguments that follows depends on the extension hypothesis of Theorem \ref{main*}}.
As $f$ extends to a homeomorphism of $\D\cup \{\alpha, \om\}$, and the sequences
 $(z'^k)_{k\leq 0}$ and  $(z'^k)_{k\geq 0}$ converge respectively to $\{\alpha\}$, and $\{\om\}$,
we obtain that the sequences   $(f(z'^k))_{k\leq 0}$ and  $(f(z'^k))_{k\geq 0}$ also converge  to $\{\alpha\}$, and $\{\om\}$
respectively.   It follows
that one can construct a sequence $(I'^k)_{k\in\Z}$ of open intervals of $(-1,1)$ such that:

\begin{itemize}
 \item $I^k\subset I'^k$;
\item $U'^k = I'^k \times (p_2(z^k), p_2(z^{k+1}))$ contains $z'^k$ and $f(z'^k)$;
\item the sequences of closed disks  $(\overline{U'^k})_{k\leq 0}$
and $(\overline{U'^k})_{k\leq 0}$  converge respectively to $\{\alpha\}$ and $\{\om\}$.
\end{itemize}

We will construct our arcs $\gamma '^k$ to be contained in $U'^k\cup \{z^k\}$.  So, these
arcs will be pairwise disjoint and the sequences $(\gamma '^k)_{k\leq 0}$ and $(\gamma '^k)_{k\geq 0}$
will converge respectively to $\{\alpha\}$ and $\{\om\}$.

If there is only a finite number of fixed points in $U'^k$, we can suppose that $z'^k$ is not fixed and take
an arc $\gamma '^k\subset U'^k\cup \{z^k\}$ disjoint from $\fix(f)$, with an endpoint in $z^k$,
and containing both $z'^k$ and $f(z'^k)$.

If there are infinitely many fixed points in $U'^k$, we can construct three arcs contained in
 $U'^k\cup \{z^k\}$, each one of them with an endpoint in $z^k$ and the other one in a fixed point, such
that these arcs meet only in $z^k$.  We can also suppose that the only fixed point of these arcs
is their other endpoint.  If one of these arcs meets its image outside its fixed extremity, we can find a
subarc $\gamma '^k$ disjoint from the fixed point set and meeting its image as we want.  Otherwise, as $f$ is
orientation preserving, necessarily the union of two of these three segments must meet its image outside the 
fixed point set.  If we delete a neighbourhood of the fixed extremity for both ot these arcs, we obtain
our arc $\gamma '^k$.

\end{proof}

By thickening the arcs given by the preceeding lemma, and then taking the ``smallest'' disk which is no longer free, we obtain:

\begin{lema}\label{dprima} There exists a sequence of pairwise disjoint closed disks $(D '^{k})_{k\in\Z}$ such that:

\begin{itemize}
 \item $z^k\in \partial D'^k$;
\item $D'^k \cap \fix(f) = \emptyset$:
\item $f(D'^k)\cap D'^k \neq \emptyset$;
\item $f(\inte((D'^k)))\cap \inte(D'^k) = \emptyset$;
\item the sequence $(D '^{k})_{k\leq 0}$ converges to $\{\alpha\}$;
\item the sequence $(D '^{k})_{k\geq 0}$ converges to $\{\om\}$.
\end{itemize}

\end{lema}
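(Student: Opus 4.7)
The plan is to thicken each arc $\gamma'^k$ to a closed disk neighborhood with $z^k$ on its boundary, then shrink within a nested parametric family until reaching the ``smallest'' disk still meeting its own image; at that critical parameter the closed disk is non-free but its interior is free, yielding the two new conditions $f(D'^k)\cap D'^k\neq\emptyset$ and $f(\inte(D'^k))\cap\inte(D'^k)=\emptyset$. The first step is to pick, for each $k\in\Z$, a closed disk neighborhood $E^k$ of $\gamma'^k$ with $z^k\in\partial E^k$ and $E^k\cap\fix(f)=\emptyset$, chosen thin enough that the $E^k$ are pairwise disjoint and the subsequences $(E^k)_{k\leq 0}$, $(E^k)_{k\geq 0}$ still converge in Hausdorff topology to $\{\alpha\}$, $\{\om\}$. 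This is possible since the arcs $(\gamma'^k)_{k\in\Z}$ are pairwise disjoint, avoid $\fix(f)$ and have the required convergence; thickening on only one side of the endpoint $z^k$ of the arc places $z^k$ on $\partial E^k$.

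Next, fixing $k$ and working in a local chart that straightens $\gamma'^k$ to the segment $[0,1]\times\{0\}$ with $z^k$ at the origin, I would introduce the nested family of closed topological disks
$$E^k_s := [0,s]\times[-\rho(s),\rho(s)] \subset E^k, \qquad s\in(0,1],$$
with $\rho:(0,1]\to(0,+\infty)$ continuous, increasing and small. Each $E^k_s$ has $z^k$ on its boundary (as the corner at the origin); the family is strictly nested away from $z^k$, i.e.\ $E^k_{s_1}\setminus\{z^k\}\subset\inte(E^k_{s_2})$ for $s_1<s_2$; it shrinks Hausdorff to $\{z^k\}$ as $s\to 0^+$; and crucially, any compact subset of $\inte(E^k_{s_0})$ lies in some $E^k_s$ with $s<s_0$.

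The heart of the argument is then monotonicity. Let $A_s:=f(E^k_s)\cap E^k_s$, which is non-decreasing in $s$. For small $s$, $f(E^k_s)$ is a small neighborhood of $f(z^k)\neq z^k$ disjoint from $E^k_s$, so $A_s=\emptyset$; for $s=1$, $A_1\supset f(\gamma'^k)\cap\gamma'^k\neq\emptyset$. Setting $s^*:=\inf\{s:A_s\neq\emptyset\}$ and $D'^k:=E^k_{s^*}$, a standard compactness argument on points $x_n\in A_{s_n}$ with $s_n\downarrow s^*$ yields $A_{s^*}\neq\emptyset$, hence $f(D'^k)\cap D'^k\neq\emptyset$. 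For the interior-freeness, any $x\in\inte(D'^k)\cap f(\inte(D'^k))$ would force $\{x,f^{-1}(x)\}\subset\inte(E^k_{s^*})$, and then $\{x,f^{-1}(x)\}\subset E^k_s$ for some $s<s^*$ by the covering property, giving $A_s\neq\emptyset$, contradicting the definition of $s^*$.

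The remaining items ($D'^k\cap\fix(f)=\emptyset$, pairwise disjointness of the $D'^k$, and Hausdorff convergence to $\{\alpha\}$ and $\{\om\}$) all follow at once from $D'^k\subset E^k$. The main delicate point is producing the nested family $(E^k_s)$ near the boundary point $z^k$ satisfying both strict nesting away from $z^k$ and the covering property $\inte(E^k_{s_0})\subset\bigcup_{s<s_0}E^k_s$; this forces the disks to meet ``tangentially'' at $z^k$, and the explicit rectangular local model above is built precisely to guarantee these two properties simultaneously.
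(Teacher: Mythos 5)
Your proposal is correct and follows the paper's own (one-sentence) sketch: thicken the arcs $\gamma'^k$ into disks and then pass to the ``smallest'' disk that is no longer free, the formalization of ``smallest'' being your nested family $E^k_s$ and the critical parameter $s^*=\inf\{s:f(E^k_s)\cap E^k_s\neq\emptyset\}$. One small inaccuracy: the claimed ``strict nesting away from $z^k$'', namely $E^k_{s_1}\setminus\{z^k\}\subset\inte(E^k_{s_2})$ for $s_1<s_2$, is false for your rectangular model (any point $(0,y)$ with $0<|y|\le\rho(s_1)$ lies on the side $\{0\}\times[-\rho(s_2),\rho(s_2)]$ of $E^k_{s_2}$, hence not in $\inte(E^k_{s_2})$), but fortunately this property is never invoked --- the argument only uses monotonicity of $A_s$, the identity $\bigcap_{s>s^*}E^k_s=E^k_{s^*}$ (from continuity of $\rho$), and the covering property $\inte(E^k_{s_0})\subset\bigcup_{s<s_0}E^k_s$, all of which do hold, so the proof goes through.
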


This last lemma allows us to construct the desired translation arcs.

\begin{lema}  Suppose that $f$ is not recurrent.  Then, there exists a sequence of pairwise disjoint closed disks 
$(D ^{k})_{k\in\Z}$ such that:

\begin{itemize}
 \item $z^k\in \inte (D^k)$;
\item $D^k \cap \fix(f) = \emptyset$:
\item $f(D^k)\cap D^k \neq \emptyset$;
\item $f(D^k)\cap D^{k'} = f^2(D^k)\cap D^{k'} =  \emptyset$ if $k'<k$;
\item the sequence $(D^{k})_{k\leq 0}$ converges to $\{\alpha\}$;
\item the sequence $(D^{k})_{k\geq 0}$ converges to $\{\om\}$.
\end{itemize}
 
\end{lema}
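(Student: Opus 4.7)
The plan is to construct the disks $D^k$ by a controlled enlargement of the disks $D'^k$ from Lemma \ref{dprima}, and to arrange the non-collision conditions with earlier disks via the non-recurrence hypothesis.

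First, I would replace each $D'^k$ by $D^k := D'^k\cup V^k$, where $V^k$ is a small closed disk neighborhood of $z^k$ inside $\D\backslash\fix(f)$, chosen small enough that the enlarged family $(D^k)_{k\in\Z}$ remains pairwise disjoint. This is possible because the $D'^k$ are pairwise disjoint, the orbit points $z^k$ are pairwise distinct (and lie on the boundaries of the $D'^k$), and the tails of the family converge to the two distinct boundary points $\alpha$ and $\om$. After this enlargement we have $z^k\in\inte(D^k)$, $D^k\cap\fix(f)=\emptyset$, Hausdorff convergence to $\{\alpha\}$ and $\{\om\}$ is preserved, and $f(D^k)\cap D^k\neq\emptyset$ is inherited from Lemma \ref{dprima}.

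The main task is to enforce $f(D^k)\cap D^{k'}=\emptyset$ and $f^2(D^k)\cap D^{k'}=\emptyset$ for every $k'<k$. I would argue by contradiction. Suppose $f(D^k)\cap D^{k'}\neq\emptyset$ for some $k'<k$. Since $z^j\in\inte(D^j)$ and $f(z^j)=z^{j+1}\in\inte(D^{j+1})$ for every $j$, we automatically have $f(D^j)\cap D^{j+1}\ni z^{j+1}\neq\emptyset$, so the sequence $D^{k'},D^{k'+1},\ldots,D^k$ is a forward chain, and the hypothesized intersection closes it. From this closed chain of connected sets I would extract a closed chain of free, open, pairwise disjoint topological disks satisfying the path-connectedness property of Proposition \ref{guile}: at intermediate indices use small free open disks around the orbit points $z^j$, and at the two end indices enlarge these via thin free arcs inside the connected sets $D^{k'}$ and $D^k$ reaching the witnesses $f(y)\in D^{k'}$ and $y\in D^k$ of the closing intersection. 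Proposition \ref{guile} then forces $f$ to be recurrent, contradicting the hypothesis. The case $f^2(D^k)\cap D^{k'}\neq\emptyset$ is analogous, the only difference being one extra iterate of $f$ in the closing link of the chain.

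The main obstacle is the joint bookkeeping: the enlargements $V^k$ must be chosen coherently so that \emph{every} non-collision condition holds simultaneously while the convergence to $\alpha,\om$ is preserved. The decisive technical point is that for each fixed $k$ the compact sets $f(D^k)$ and $f^2(D^k)$ can only meet finitely many of the disks $D^{k'}$ with $k'<k$, since the latter accumulate only at $\alpha$ as $k'\to-\infty$; hence the enlargements can be refined inductively to kill all potential violations without destroying the properties already secured.
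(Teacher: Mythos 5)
There is a genuine gap, and it sits precisely where you wave at ``bookkeeping.''  Your disks $D^k = D'^k \cup V^k$ contain the fixed disks $D'^k$ from Lemma~\ref{dprima}, so the conditions $f(D^k)\cap D^{k'}=\emptyset$ and $f^2(D^k)\cap D^{k'}=\emptyset$ can only hold if the corresponding statements already hold for the $D'^k$-parts, namely $f(D'^k)\cap D'^{k'}=\emptyset$ and $f^2(D'^k)\cap D'^{k'}=\emptyset$.  No amount of shrinking the bumps $V^k$ can affect these, so your closing paragraph --- ``only finitely many violations, kill them by refinement'' --- does not address the real difficulty: shrinking handles collisions involving a bump, but a collision $y\in D'^k$, $f(y)\in D'^{k'}$ is built into the fixed disks and is either already impossible or fatal.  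You are implicitly relying on your contradiction argument to prove that such collisions never occur, but then the refinement step is not what closes the proof.  Moreover, the contradiction argument as sketched has an unaddressed obstruction: the ``thin free arc'' from $z^k$ to the witness $y$ inside $D^k$ is \emph{not} automatically free.  Since $\inte(D'^k)$ is free but $D'^k$ itself is not, an arc from the boundary point $z^k$ through $\inte(D'^k)$ to $y$ can fail to be free exactly when $f^{-1}(y)$ (or, in the $f^2$ case, $f^{-2}(y)$) lies on the arc, and a priori $f^{-1}(y)$ may well lie in $D'^k$.  One must route the arc to avoid this point, and for Proposition~\ref{guile} one must then thicken the arcs into free open strips with pairwise disjoint interiors --- none of which you verify.

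This is exactly the difficulty the paper's proof is engineered to sidestep.  Rather than enlarging $D'^k$, the paper \emph{shrinks} to two carefully chosen free arcs $\gamma_-^k, \gamma_+^k \subset D'^k$ with endpoints at $z^k$ and at a point $x^k \in \partial D'^k \cap f^{-1}(\partial D'^k)$ and its image $f(x^k)$; with these endpoints the freeness check is clean because the problematic preimages land on $\partial D'^k$ rather than in the interior.  The non-recurrence hypothesis is then applied once to show that the positive orbit of $\gamma_\pm^k$ never meets $\gamma_\pm^{k'}$ for $k'<k$, and only \emph{afterwards} are the $D^k$ built as successively chosen closed disk neighbourhoods of the concatenations $\gamma_-^k\gamma_+^k$, using the already-established orbit property together with Hausdorff convergence of the arcs to $\alpha$ and $\om$ to enforce every disjointness condition against the finitely many previously constructed disks.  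Your plan could conceivably be repaired along the same lines, but as written it replaces the paper's controlled construction with an enlargement that cannot work and a contradiction argument whose key freeness step is left open.
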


\begin{proof} Let $(D '^{k})_{k\in\Z}$ be the sequence of pairwise disjoint closed disks given by Lemma \ref{dprima}.  If $\gamma$ is an
arc joining $z^k$ and a point $z'\in \partial D'^k$ which is contained in $\inte (D'^k)$ except for its endpoints, then $\gamma$ is free.
Indeed as $\inte (D'^k)$ is free, $f(\gamma)\cap \gamma \neq \emptyset$ implies either $z^k \in f(\gamma)\cap \gamma $ or
$z'^k \in f(\gamma)\cap \gamma$.  The first case is impossible because $z^{k-1}$, the preimage of $z^k$, is contained in $D'^{k-1}$ which
is disjoint from $D'^k$.  The second case implies (as $D'^k\cap \fix(f)\neq \emptyset$) that $z'^k = f(z^k)$, which is also imposible as  
$z^{k+1}$ is contained in $D'^{k+1}$ which is disjoint from $D'^k$. 

Take a point $x_k\in \partial D'^k\cap f^{-1}(\partial D'^k)$, and two arcs $\gamma _-^k$, $\gamma_+^k$ contained in $\inte (D'^k)$ except 
for its endpoints, the former joining $x^k$ and $z^k$, and the latter  joining $z^k$ and $f(x^k)$, and such that $\gamma _-^k\cap \gamma_+^k =
\{z_k\}$.  If $k'<k$, then the positive orbit of $\gamma _-^{k'}$ and $\gamma_+^{k'}$ meets $\gamma _-^k$ and $\gamma_+^k$.  As these arcs
are all free, and we are supposing that $f$ is not recurrent, we obtain that the positive orbit of  $\gamma _-^k$ and $\gamma_+^k$ never
meets  $\gamma _-^{k'}$ or $\gamma_+^{k'}$.  Besides, as

$$\lim_{k\to -\infty} \gamma_-^k\gamma_+^k = \{\alpha\}, \lim_{k\to -\infty} \gamma_-^k\gamma_+^k = \{\alpha\}, $$
\noindent we can find a closed disk $D^0$ neighbourhood of $\gamma_-^0\gamma_+^0$ such that:

\begin{itemize}
 \item $D^0\cap \fix(f) =\emptyset$;
\item $D^0\cap \gamma_-^k\gamma_+^k = f(D^0)\cap \gamma_-^k\gamma_+^k = f^2(D^0)\cap \gamma_-^k\gamma_+^k = \emptyset$, if $k<0$;
\item $D^0\cap \gamma_-^k\gamma_+^k = f^{-1}(D^0)\cap \gamma_-^k\gamma_+^k = f^{-2}(D^0)\cap \gamma_-^k\gamma_+^k = \emptyset$, if $k>0$.
\end{itemize}

We obtain:

\begin{itemize}
 \item $z^0\in \inte(D^0)$;
\item $f(D^0\cap D^0)\neq \emptyset$.
\end{itemize}

Now we can choose a closed disk $D^1$ neighbourhood of $\gamma_-^1\gamma_+^1$ such that:

\begin{itemize}
 \item $D^1\cap \fix(f) =\emptyset$;
\item $D^1\cap \gamma_-^k\gamma_+^k = f(D^1)\cap \gamma_-^k\gamma_+^k = f^2(D^1)\cap \gamma_-^k\gamma_+^k = \emptyset$, if $k<1$;
\item $D^1\cap D^0 = f(D^1)\cap D^0= f^2(D^1)\cap D^0 = \emptyset$;
\item $D^1\cap \gamma_-^k\gamma_+^k = f^{-1}(D^1)\cap \gamma_-^k\gamma_+^k = f^{-2}(D^0)\cap \gamma_-^k\gamma_+^k = \emptyset$, if $k>1$.
\end{itemize}

So, 

\begin{itemize}
 \item $z^1\in \inte(D^1)$;
\item $f(D^1\cap D^1)\neq \emptyset$.
\end{itemize}

We proceed inductively to construct our sequence $(D ^{k})_{k\in\Z}$.

\end{proof}

Now we are ready to prove Proposition \ref{gamak}:

\begin{proof} Suppose that $f$ is non-recurrent, and take a sequence of closed disks $(D ^{k})_{k\in\Z}$ as in the preceding lemma.  By taking
a smaller disk if necessary, we can suppose that the interior of each $D^k$ is free.  Take a point 
$x_k\in \partial D'^k\cap f^{-1}(\partial D'^k)$, and two arcs $\gamma _-^k$, $\gamma_+^k$ contained in $\inte (D'^k)$ except 
for one endpoint, the former joining $x^k$ and $z^k$, and the latter  joining $z^k$ and $f(x^k)$, and such that $\gamma _-^k\cap \gamma_+^k =
\{z_k\}$.  Then, $\gamma_-^k\gamma_+^k$ is a translation arc.  As $f$ is not recurrent, $\gamma ^k = \gamma_-^k\gamma_+^k$ is a translation arc 
as well.  Besides, $\gamma ^k$ joins $z^k$ and $z^{k+1}$.  The other required properties of $\gamma ^k$ are verified because:

\begin{itemize}
 \item $\gamma^k \subset D^k\cup f(D^k)$ and $f$ extends to a homeomorphism of $\D\cup \{\alpha, \om\}$;
\item $D^{k'}\cup f (D^{k'})$ is disjoint from $f(D^{k})\cup f^2 (D^{k})$ if $k'<k$.
\end{itemize}

\end{proof}

The following lemma allows us to suppose that $\gamma _i^k\cap O_i = \{z_i^k, z_i^{k+1}\}$ and $\gamma _i^k\cap O_{i'} = \emptyset$ if $i'\neq i$.

\begin{lema}  Let $\gamma$ be a translation arc for a point $z\notin \fix (f^2)$ and $\gamma ' \subset \gamma \backslash \{z, f(z)\}$ an arc.
There exists a neighbourhood $U$ of $\gamma '$ such that any arc joining $z$ and $f(z)$ contained in $\gamma \cup U$ is a translation arc.
 
\end{lema}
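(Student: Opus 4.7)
The plan is to exploit the fact that $\gamma$ is already a translation arc to isolate three disjointness properties of $\gamma'$, and then to promote these to an open neighbourhood by a compactness argument. Since $z\notin\fix(f^2)$, what must be checked for a candidate arc $\tilde\gamma$ joining $z$ to $f(z)$ is that $f(\tilde\gamma)\cap\tilde\gamma=\{f(z)\}$; the point $f(z)$ automatically lies in this intersection as an endpoint of $\tilde\gamma$ and as $f$-image of the other endpoint $z$.

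First I would check three elementary equalities. From $\gamma\cap f(\gamma)=\{f(z)\}$ and $f(z)\notin\gamma'$, one reads off both $\gamma'\cap f(\gamma)=\emptyset$ and $\gamma'\cap f(\gamma')=\emptyset$. For $f(\gamma')\cap\gamma=\emptyset$, the same containment gives $f(\gamma')\cap\gamma\subset\{f(z)\}$; should $f(z)$ belong to $f(\gamma')$, injectivity of $f$ would force $z\in\gamma'$, contradicting the hypothesis $\gamma'\subset\gamma\setminus\{z,f(z)\}$.

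Since $\gamma'$ is compact and $f$ is continuous, each of these disjointness statements persists under a sufficiently small thickening of $\gamma'$. I would therefore choose an open neighbourhood $U$ of $\gamma'$, with compact closure disjoint from $\{z,f(z)\}$, small enough that
$$\overline{U}\cap f(\gamma)=\emptyset,\quad f(\overline{U})\cap\gamma=\emptyset,\quad \overline{U}\cap f(\overline{U})=\emptyset.$$
For any arc $\tilde\gamma\subset\gamma\cup U$ joining $z$ and $f(z)$,
$$\tilde\gamma\cap f(\tilde\gamma)\subset(\gamma\cup U)\cap(f(\gamma)\cup f(U))=\gamma\cap f(\gamma)=\{f(z)\},$$
the three cross-terms vanishing by the choice of $U$. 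Combined with $f(z)\in\tilde\gamma\cap f(\tilde\gamma)$, this gives equality, so $\tilde\gamma$ is a translation arc.

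No step is a serious obstacle; the only mildly delicate point is the bookkeeping to ensure $\overline{U}$ stays away from $\{z,f(z)\}$, so that the endpoints of $\tilde\gamma$ really do lie on $\gamma$ outside $U$ and participate in $\gamma\cap f(\gamma)$ in the expected way. This is automatic as soon as $\gamma'$ is bounded away from these two points, which is granted by $\gamma'\subset\gamma\setminus\{z,f(z)\}$.
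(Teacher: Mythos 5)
Your proof is correct and follows essentially the same route as the paper, which simply records the three disjointness equalities $f(\gamma')\cap\gamma' = f(\gamma')\cap\gamma = \gamma'\cap f(\gamma) = \emptyset$ (the paper's display contains an evident typo, writing $f(\gamma'\cap\gamma')$ for $f(\gamma')\cap\gamma'$) and leaves the thickening-by-compactness step implicit. You have spelled out that step carefully and correctly; nothing further is needed.
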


\begin{proof} Just note that $$f(\gamma ' \cap \gamma ') = f(\gamma ')\cap \gamma = \gamma '\cap f(\gamma) = \emptyset.$$
 
\end{proof}

\section{Repeller/Attractor configurations at infinity}\label{raconf}

\subsection{Cyclic order at infinity.}\label{co}

Let $(a_i)_{i\in \Z/n\Z}$ be a family of  non-empty, pairwise disjoint, closed, connected subsets of 
$\D$, such that $\overline a_i \cap \partial \D \neq \emptyset$ and $U= \D\backslash (\cup_{i\in \Z/n\Z} a_i)$ is
a connected open set.  As $U$ is connected, and its complementary set in $\C$
$$\{z\in \C: |z|\geq 1\}\cup \cup_{i\in \Z/n\Z} a_i$$\noindent is connected, $U$ is simply connected.   

With these hypotheses, there is a natural cyclic order on the sets $\{a_i\}$.  Indeed, $U$ is conformally isomorphic
to the unit disc via the Riemann map $\varphi : U \to \D$, and one can consider the Carath\'eodory's extension of $\varphi$,

$$\hat\varphi: \hat U \to \overline{\D},$$\noindent which is a homeomorphism between the prime ends completion $\hat U$
of $U$ and the closed unit disk $\overline {\D}$.  The set $\hat J_i$ of prime ends whose impression is contained in $a_i$ is open and connected.  It follows that
the images $J_i = \hat\varphi (\hat J_i)$ are pairwise disjoint open intervals in $S^1$, and are therefore cyclically ordered
following the positive orientation in the circle.

\subsection{Repeller/Attractor configurations. }

We fix $f\in \homeo ^+ (\D)$ together with a free maximal decomposition in bricks  $\cal
D$$=(V,E,B)$ of $\D\backslash \fix (f)$ .\\

Let  $(R_i)_{i\in \Z/n\Z}$ and  $(A_i)_{i\in \Z/n\Z}$ be two families of connected, pairwise disjoint subsets
 of $B$ such that  :

\begin{enumerate}
 
\item For all $i\in \Z/n\Z$:

\begin{enumerate} \item  $R_i$ is a repeller and $A_i$ is an attractor;

\item there exists non-empty, closed, connected subsets of $\D$, $r_i\subset \inte(R_i)$, $a_i\subset
\inte(A_i)$
such that $\overline{r_i} \cap \partial \D\neq \emptyset$ and $\overline{a_i} \cap \partial \D\neq \emptyset$ ,
\end{enumerate}

\item \label{orden} $\D\backslash (\cup_{i\in \Z/n\Z}(a_i\cup r_i))$ is a connected open set.
\end{enumerate}

We say that the pair $((R_i)_{i\in \Z/n\Z} , (A_i)_{i\in \Z/n\Z})$ is a {\it Repeller/Attractor configuration of order $n$}
\index{Repeller/Attractor configuration}.\\ 
We will note 
$${\cal{E}} = \{R_i, A_i: i\in \Z/n\Z \}.$$

Property \ref{orden} in the previous definition allows us to  give a cyclic order to the sets $r_i, a_i, i\in \Z/n\Z$ (see
the beginning of this section).

We say that a Repeller/Attractor configuration of order $n\geq 3$ is an {\it elliptic configuration}\index{Repeller/Attractor configuration! elliptic} if :

\begin{enumerate}
\item the cyclic order of the sets $r_i, a_i$, $i\in \Z/n\Z$, satisfies the 
{\it elliptic order property}\index{elliptic order property}:
 $$a_0\to r_2\to a_1\to \ldots\to a_i\to r_{i+2}\to a_{i+1}\to \ldots\to a_{n-1}\to r_1\to a_0.$$
\item for all $i\in\Z/n\Z$ there exists a brick $b_i\in R_i$ such that $b_{i_{\geq}}\cap A_i\neq \emptyset $;

\end{enumerate}

 We say that a Repeller/Attractor configuration is a {\it hyperbolic configuration}\index{Repeller/Attractor configuration! hyperbolic} if:
\begin{enumerate}
\item  the cyclic order of the sets $r_i, a_i$, $i\in \Z/n\Z$, satisfies the
 {\it hyperbolic order property}\index{hyperbolic order property}:
$$r_0\to a_0\to r_{1}\to a_1\to \ldots\to r_i\to a_{i}\to r_{i+1}\to a_{i+1}\to\ldots \to r_{n-1}\to 
 a_{n-1}\to r_0.$$ 
 \item for all $i\in\Z/n\Z$ there exists two bricks $b_i^i, b_{i}^{i-1}\in R_i$ such that $[b]_{i_>}^i\cap A_i\neq \emptyset$, and 
$[b]_{i_>}^{i-1}\cap A_{i-1}\neq \emptyset$;

\end{enumerate}

\begin{figure}[h]   \label{noorientado}
\begin{center}
\psfrag{a}{$R_0$}\psfrag{b}{$A_0$}\psfrag{c}{$R_1$}\psfrag{d}{$A_1$}\psfrag{e}{$R_2$}\psfrag{f}{$A_2$}\psfrag{m}{$R_3$}
\psfrag{n}{$A_3$}\psfrag{g}{$R_1$}\psfrag{h}{$A_0$}\psfrag{i}{$R_2$}\psfrag{j}{$A_1$}\psfrag{k}{$R_0$}\psfrag{l}{$A_2$}

   \subfigure[An elliptic configuration]{\includegraphics[scale=0.3]{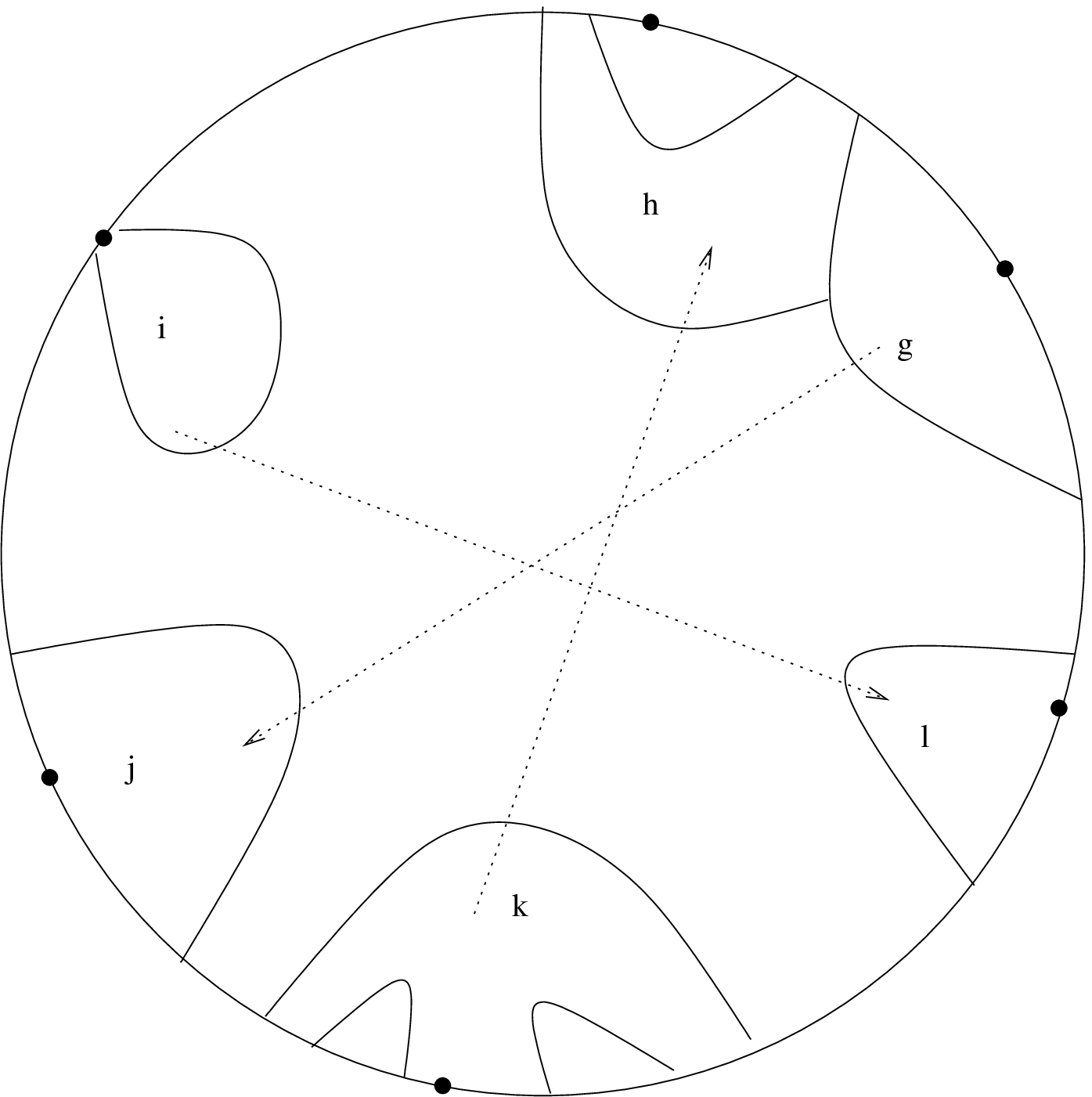}}\hspace{.25in}
    \subfigure[ A hyperbolic configuration] {\includegraphics[scale=0.3]{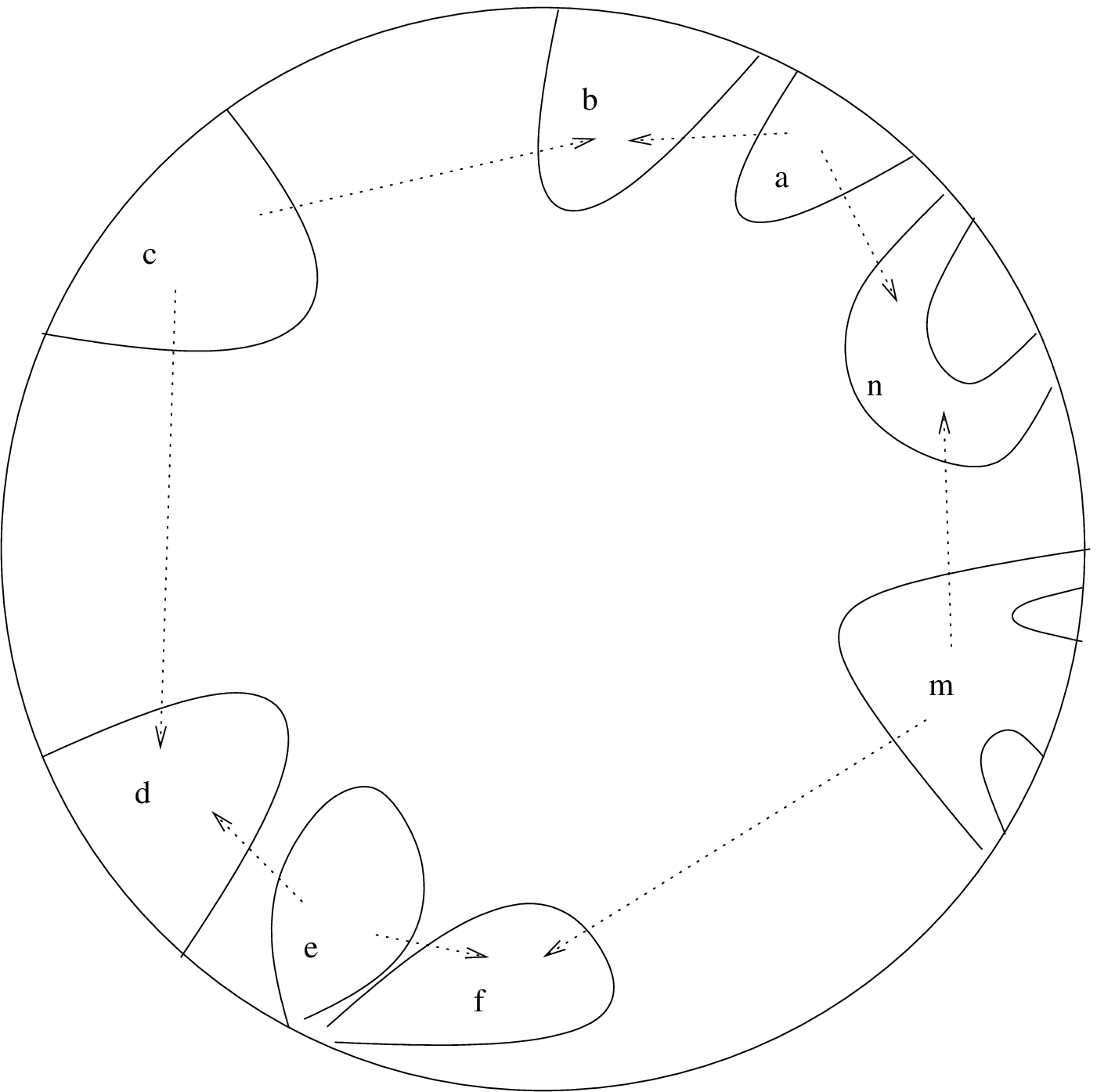}}\hspace{.25in} \\

\end{center}
 \end{figure}

We will show:

\begin{prop}\label{hc} If there exists an elliptic configuration of order $n\geq 3$, then $f$ is recurrent.
 
\end{prop}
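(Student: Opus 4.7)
The plan is to argue by contradiction. Assume $f$ is non-recurrent. Then by Proposition \ref{franksfino} no brick $b$ of the decomposition $\cal D$ satisfies $b\in[b]_>$, and by Proposition \ref{futcon} the sets $[b]_\geq$ and $[b]_\leq$ are connected for every brick $b$. For each $i\in\Z/n\Z$ I would set $F_i:=[b_i]_\geq$, a connected attractor containing $b_i\in R_i$ and meeting $A_i$ by the second clause in the definition of an elliptic configuration; then pick $a_i^*\in F_i\cap A_i$ and set $G_i:=[a_i^*]_\leq$, a connected repeller containing both $b_i$ and $a_i^*$. A first easy observation, using Remark \ref{br} and the pairwise disjointness of the $R_j$'s, is that $F_i\cap R_j=\emptyset$ for $j\neq i$, and dually $G_i\cap A_j=\emptyset$: a brick $b\in F_i\cap R_j$ would force $b_i\in [b]_\leq\subset R_j$, contradicting $R_i\cap R_j=\emptyset$.

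The heart of the argument is a plane-topology crossing lemma applied to the simply-connected region $U=\D\setminus\cup_k(r_k\cup a_k)$ from Section \ref{co}. Its prime-end compactification $\hat U\cong\overline\D$ carries the disjoint arcs $\hat J_{r_k}, \hat J_{a_k}$ in the cyclic order prescribed by the elliptic order property, so that for consecutive indices $i, i+1$ the four arcs $\hat J_{r_i}, \hat J_{a_i}, \hat J_{r_{i+1}}, \hat J_{a_{i+1}}$ interleave (they occupy the positions $2i-2, 2i+1, 2i, 2i+3$ in the cyclic pattern $a_0, r_2, a_1, r_3, \ldots$). In $\hat U$, any continuum joining $\hat J_{r_i}$ to $\hat J_{a_i}$ therefore meets any continuum joining $\hat J_{r_{i+1}}$ to $\hat J_{a_{i+1}}$. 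Applying this to the connected continua $\tilde F_i:=F_i\cup R_i\cup A_i$ and $\tilde F_{i+1}$, and then to $\tilde F_i$ and $\tilde G_{i+1}:=G_{i+1}\cup R_{i+1}\cup A_{i+1}$, and using pairwise disjointness of the $R_j, A_j$'s together with the first-step exclusions to rule out spurious intersections through the $R$'s or $A$'s, one gets $F_i\cap F_{i+1}\neq\emptyset$ and $F_i\cap G_{i+1}\neq\emptyset$; Remark \ref{br} then produces actual common bricks $c_i\in F_i\cap F_{i+1}$ and $e_i\in F_i\cap G_{i+1}$. These give the forward chains $\{b_i,b_{i+1}\}\subset[c_i]_\leq$ and $b_i\in[e_i]_\leq\subset [a_{i+1}^*]_\leq$; in particular $F_i$ also meets $A_{i+1}$.

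The main obstacle is to assemble this cyclic family of crossings into a single brick $b$ with $b\in [b]_>$, which by Proposition \ref{franksfino} would contradict non-recurrence. My idea is to combine the bricks $c_i$ with the dual common bricks $d_i\in G_i\cap G_{i+1}$ (satisfying $\{a_i^*,a_{i+1}^*\}\subset[d_i]_\geq$, obtained by the symmetric crossing of $\tilde G_i$ with $\tilde G_{i+1}$), and thread them into a single closed chain using the connectedness of each $F_j$ and $G_j$ granted by Proposition \ref{futcon}; the elliptic rotation on the prime-end circle should force this chain to close up after going once around $\Z/n\Z$. Making this precise requires careful use of Remark \ref{br}, and in particular the fact that disjoint attractors (resp.\ repellers) cannot be adjacent, to rule out all alternative configurations in which the chain could escape the cyclic pattern; this is the delicate portion of the proof.
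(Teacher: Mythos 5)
Your argument starts from the same objects ($b_i\in R_i$ with $[b_i]_\geq\cap A_i\neq\emptyset$) but then follows a genuinely different route from the paper, and it has a gap at precisely the step you flag as ``delicate.'' The paper does not try to exhibit a closed chain of bricks directly. Instead it first reduces to $n=3$, then passes to a \emph{maximal} elliptic configuration, uses the notion of a \emph{connexion brick} (Lemmas~\ref{max}, \ref{ady}, \ref{bcon}) to find a brick $b$ whose strict past $[b]_<$ meets every $R_j$, and then enlarges $A_0$ to the connected component of $B\setminus(\cup R_i\cup[b]_\leq)$ containing $A_0$ --- a strictly larger elliptic configuration, contradicting maximality. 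Maximality is the engine of the contradiction; you never invoke it.

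The concrete problem with your plan to ``assemble this cyclic family of crossings into a single brick $b$ with $b\in[b]_>$'' is structural, not merely technical. Each crossing you extract gives a \emph{wedge}, not a link in a chain. From $c_i\in F_i\cap F_{i+1}$ you get $c_i\in[b_i]_\geq$ and $c_i\in[b_{i+1}]_\geq$, i.e.\ $b_i\to c_i\leftarrow b_{i+1}$; from $e_i\in F_i\cap G_{i+1}$ you get $b_i\to e_i\leftarrow a_{i+1}^*$, i.e.\ $a_{i+1}^*\in[b_i]_\geq$; from $d_i\in G_i\cap G_{i+1}$ you get $d_i\to a_i^*$ and $d_i\to a_{i+1}^*$. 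In every case the arrows point from the repeller side to the attractor side. To close a chain you need at least one arrow going the other way --- from some $a^*_\bullet$ or $c_\bullet$ back into a repeller --- and Remark~\ref{br}, item~\ref{br2}, rules this out: a brick outside a repeller has future disjoint from that repeller. So the relations you have produced cannot concatenate into a closed chain, no matter how you thread them around $\Z/n\Z$; the obstruction is exactly the attractor/repeller asymmetry, and it cannot be removed with ``careful use of Remark~\ref{br}.'' (There is also a secondary issue in the crossing step itself: $\tilde F_i=F_i\cup R_i\cup A_i$ need not avoid the other $A_j$'s, so it is not a continuum inside $\overline U$ and the prime-end interleaving argument cannot be applied to it verbatim; the paper's Lemma~\ref{eop} is stated in terms of separation by $\inte(C)$ precisely to sidestep this.) The missing idea is the maximality argument, and the auxiliary reduction to $n=3$; without them the proof does not close.
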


\begin{prop}\label{2c} If there exists a hyperbolic configuration of order $n\geq 2$, then $\fix (f)\neq \emptyset$.
 
\end{prop}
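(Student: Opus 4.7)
The plan is to reason by contradiction: assume $\fix(f)=\emptyset$, and show that $f$ must then be recurrent. Once recurrence is established, Franks' lemma produces a simple closed curve $C\subset\D$ of index~$1$, and the Brouwer index formula yields a fixed point inside $C$, contradicting the assumption. So I may assume throughout that $f$ is both fixed-point free and non-recurrent, and aim for an outright contradiction by exhibiting a single brick $b^{\star}$ with $b^{\star}\in[b^{\star}]_{>}$: Proposition~\ref{franksfino} would then force recurrence, contradicting our reduction.

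I fix a free maximal brick decomposition $\mathcal{D}=(V,E,B)$ of $\D\setminus\fix(f)=\D$. By Proposition~\ref{futcon}, every strict future $[b]_{>}$ and strict past $[b]_{<}$ is connected. For each $i\in\Z/n\Z$ set
$$P_i^{+}\;:=\;[b_i^{i}]_{\geq}\qquad\text{and}\qquad P_i^{-}\;:=\;[b_i^{i-1}]_{\geq};$$
these are connected attractors with $P_i^{+}\cap A_i\neq\emptyset$ and $P_i^{-}\cap A_{i-1}\neq\emptyset$. The union
$$\Gamma\;:=\;\bigcup_{i\in\Z/n\Z}\bigl(R_i\cup P_i^{+}\cup A_i\cup P_{i+1}^{-}\bigr)$$
is a connected closed subset of $\D$ meeting $\partial\D$ along $2n$ arcs in the hyperbolic cyclic order $r_0,a_0,r_1,a_1,\dots,r_{n-1},a_{n-1}$.

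The core of the argument is to exploit the bidirectional access at each attractor $A_i$: it is reached both from $R_i$ (via $P_i^{+}$) and from $R_{i+1}$ (via $P_{i+1}^{-}$), so the subsets $P_i^{+}\cap A_i$ and $P_{i+1}^{-}\cap A_i$ are attractors sitting inside the connected attractor $A_i$. Using connectedness of $A_i$ in $B$ together with item~\ref{br4} of Remark~\ref{br} --- which forbids two disjoint attractors from being adjacent --- I aim to produce, for each $i$, a brick $c_i\in A_i$ with $b_i^{i},b_{i+1}^{i}\in[c_i]_{<}$; indeed, either the two attractor subsets share a brick directly, or the connectedness of $A_i$ forces an intermediate brick whose past meets both. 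The dual reasoning applied at each repeller $R_i$ (reached backwards from two attractors) produces bricks $d_i\in R_i$ in a symmetric position. The connectedness of the $R_j$'s and $A_j$'s in $B$ should then assemble the family $\{c_i,d_i\}$, together with the starting bricks $b_i^{i},b_i^{i-1}$, into an $f$-chain that cycles once around $\Gamma$ and returns to its starting brick.

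The main obstacle is precisely the closure of this chain. Unlike the elliptic situation (Proposition~\ref{hc}), where the cyclic order forces the relevant paths to cross as diagonal chords and a common brick falls out automatically, the hyperbolic paths $P_i^{+},P_j^{-}$ sit between adjacent boundary arcs and need not cross topologically at all; moreover, since an attractor's forward orbit cannot reach a repeller, the chain cannot close dynamically by a direct ``attractor-to-repeller'' arrow, and instead must close through the combined connectedness of all $R_j$ and $A_j$ together with the maximality of $\mathcal{D}$. I expect to handle this delicate combinatorial step by induction on $n$, with the symmetric base case $n=2$ --- where each repeller reaches both attractors and the brick diagram has a manifest $\Z/2\Z$-symmetry --- verified by an explicit enumeration of the possible configurations of $\{b_i^{j},c_i,d_i\}$, ruling out the non-closure of the chain via the non-recurrence hypothesis.
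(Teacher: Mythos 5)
Your high-level plan — assume $\fix(f)=\emptyset$ and non-recurrence, derive a contradiction — is the right contradiction to aim for, but the mechanism you propose (assembling a closed brick chain $b^{\star}\in[b^{\star}]_{>}$ by traversing $\Gamma$ cyclically) has a gap that you actually identify yourself and then do not close. The dynamics of $\varphi$ only ever lead \emph{from} a repeller \emph{towards} an attractor: if $b\in R_i$ then $[b]_{>}$ is disjoint from every repeller (Remark~\ref{br}, item~\ref{br2}), and dually the past of a brick in $A_i$ is disjoint from every attractor. So once your chain has reached a brick $c_i\in A_i$, no amount of ``combined connectedness of all $R_j$ and $A_j$'' will take you forward into any $R_j$ again — the closed chain you want cannot exist through the bricks in $\cup(R_i\cup A_i)$, and you give no construction of a return path outside them. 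Maximality of the brick decomposition $\mathcal{D}$ (Prop.~\ref{futcon}) gives connected futures and pasts, but it does nothing to bridge the attractor-to-repeller gap; the ``explicit enumeration'' for $n=2$ is asserted, not carried out, and there is no reason to expect a $\Z/2\Z$-symmetric diagram to produce a closed chain where the order structure forbids one.

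The paper takes a genuinely different route: it never tries to close a chain. Instead it works with maximality of the \emph{configuration} (not of the decomposition). Under non-recurrence, Lemma~\ref{bconh} shows that a maximal hyperbolic configuration either already forces $\fix(f)\neq\emptyset$, or admits a \emph{connexion brick} $b$ from some $R_j$ to $A_j$ (this is exactly where $\fix(f)=\emptyset$ is used, via the structure of $\partial R_i\cap\partial A_i$ at a vertex of $\Sigma(\mathcal{D})$). From a connexion brick one builds a strictly larger configuration when $n=2$ (the separation statement of Lemma~\ref{hop} shows the components $A_0'$, $A_1'$ of $B\setminus(R_0\cup[b]_\leq\cup R_1)$ are disjoint and $A_0'\supsetneq A_0$), and for $n>2$ one merges $R_0$ with some $R_i$ along $[b]_\leq$ to obtain a hyperbolic configuration of strictly smaller order $\ge 2$, setting up an induction whose base case is $n=2$. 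Contradicting configuration maximality, rather than manufacturing a recurrent brick, is precisely what sidesteps the topological non-crossing you correctly flag as the obstacle: in the hyperbolic arrangement the paths $P_i^{+},P_{i+1}^{-}$ need not cross, and the paper's argument does not need them to. You would need to replace your chain-closure step with something like the connexion-brick/maximality mechanism; as written, the proposal does not prove the statement.
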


One could think that Proposition \ref{2c} should give a negative-index fixed point, as the example that comes to mind is that
of a saddle point (see the figure below).\newpage

\begin{figure}[h]\label{silla}
\begin{center}
\psfrag{a}{$R_0 $}\psfrag{b}{$A_0$}\psfrag{c}{$R_1$}\psfrag{d}{$A_1$}
{\includegraphics[scale=0.6]{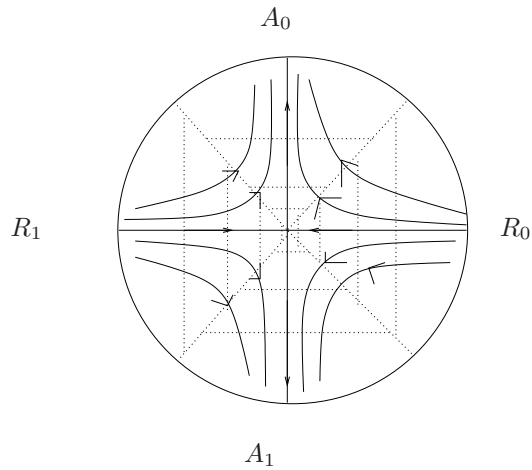}}
\caption{A hyperbolic configuration arising from a saddle point.}
\end{center}
\end{figure}

However, this is not the case, as the following example shows. 

\begin{ex}  Let $f_1$ be the time-one map of the flow whose orbits are drawn in the following figure:
 
\begin{figure}[h]
\begin{center}
\psfrag{e}{$R_0 $}\psfrag{b}{$A_0$}\psfrag{c}{$R_1$}\psfrag{d}{$A_1$}
{\includegraphics[scale=0.6]{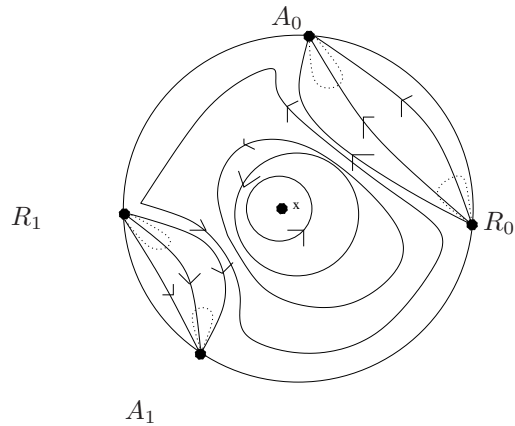}}
\caption{A hyperbolic configuration without a fixed point of negative index.}
\end{center}
\end{figure}

One can perturb $f_1$ in a homeomorphism $f$ such that:

\begin{enumerate}
\item $\fix(f) = \fix(f_1)= \{x\}$,
\item $f = f_1$ in a neighbourhood of $x$,
\item $f = f_1$ in a neighbourhood of $S^1$ (and so $f$ preserves the repellers and attractors drawn in dotted lines),
\item there is an $f$-orbit from $R_0$ to $A_1$,
\item there is an $f$-orbit from $R_1$ to $A_0$.

\end{enumerate}

So, $((R_i)_{i\in\Z/2\Z}, (A_i)_{i\in\Z/2\Z})$ is a hyperbolic configuration for $f$, but the only fixed point
$f$ has is an index-one fixed point.

\end{ex}

We define an order relationship in the set of Repeller/Attractor configurations of order $n$ :

$$((R_i)_{i\in \Z/n\Z} , (A_i)_{i\in \Z/n\Z}) \leq ((R_i')_{i\in \Z/n\Z} , (A_i')_{i\in \Z/n\Z})$$\noindent if and only if for all 
$i \in \Z/n\Z$ $$A_i \subseteq A_i ' \textrm{ and}\  R_i \subseteq R_i '.$$   \\

As the union of attractors (resp. repellers) is an attractor (resp. repeller), the existence of an 
elliptic (resp. hyperbolic) Repeller/Attractor configuration implies the existence
of a maximal elliptic (resp.hyperbolic) Repeller/Attractor configuration by Zorn's lemma.

\begin{ex} The hyperbolic configuration in Figure 6 is maximal.
 
\end{ex}

\noindent {\bf We will assume for the rest of this section that $f$ is non-recurrent.}  In particular, for any brick $b\in B$,
the sets
$[b]_\geq$, $[b]_>$, $[b]_\leq$ and $[b]_<$ are connected (see Proposition \ref{futcon}).\\

The following lemma is an  immediate consequence of the maximality of configurations:

\begin{lema}\label{max} Let $((R_i)_{i\in \Z/n\Z} , (A_i)_{i\in \Z/n\Z})$ be a maximal configuration 
(either elliptic or hyperbolic), and consider a brick $b \in B\backslash \cup_{i\in \Z/n\Z} (R_i\cup A_i)$.
If  $b$ is adjacent to $R_i$, then there exists, $j\neq i$, such that  $[b]_<\cap R_j \neq \emptyset$
in $B$.  If  $b$ is adjacent to $A_i$, then there exists, $j\neq i$, such that  $[b]_>\cap A_j \neq \emptyset$
in $B$.
 
\end{lema}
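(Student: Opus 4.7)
The plan is to argue by contradiction, by showing that if either conclusion of the lemma fails one can strictly enlarge the configuration, contradicting maximality. I will focus on the repeller statement; the attractor statement is entirely dual.

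First I would assume $b$ is adjacent to $R_i$ while $[b]_< \cap R_j = \emptyset$ in $B$ for every $j \neq i$, and collect a few routine facts. Since $b$ is adjacent to the repeller $R_i$ and $b \notin R_i$, Remark~\ref{br}(\ref{br3}) gives $[b]_< \cap R_i \neq \emptyset$. Since $b \notin A_k$ for every $k$, Remark~\ref{br}(\ref{br2}) applied to each attractor $A_k$ gives $[b]_\leq \cap A_k = \emptyset$, hence $[b]_< \cap A_k = \emptyset$. Moreover $b \notin [b]_<$: otherwise the list of equivalent conditions stated just before Proposition~\ref{franksfino} would produce a closed chain of bricks, so $f$ would be recurrent, contrary to the standing hypothesis of this section.

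Next I would form the candidate enlargement $\widetilde{R}_i = R_i \cup [b]_< \cup \{b\}$ and check that: (i) it is a repeller, because $\varphi_-$ sends $R_i$ into $R_i$, sends $[b]_<$ into itself, and sends $\{b\}$ into $[b]_<$; (ii) it is connected as a subset of $B$, since $R_i$ is connected, $[b]_<$ is connected by Proposition~\ref{futcon} (this is where non-recurrence is crucial), the two share a brick by the first step, and $b$ is adjacent to $R_i$; (iii) it strictly contains $R_i$, because $b \in \widetilde{R}_i \setminus R_i$; (iv) it remains disjoint in $B$ from every other $R_j$ and from every $A_k$, by the contradiction hypothesis together with $b \notin R_j$, $b \notin A_k$, and $[b]_< \cap A_k = \emptyset$.

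Finally I would verify that replacing $R_i$ by $\widetilde{R}_i$ preserves all the remaining ingredients of an elliptic (respectively hyperbolic) configuration: the inclusion $r_i \subset \inte(R_i) \subset \inte(\widetilde{R}_i)$ lets us keep the same $r_i$, so the cyclic order and the connectedness of $\D \setminus \cup_i (a_i \cup r_i)$ are untouched, and the elliptic (resp.\ hyperbolic) witness bricks $b_i \in R_i$ (resp.\ $b_i^i, b_i^{i-1} \in R_i$) still lie in $\widetilde{R}_i$ with the same strict futures. This gives a strictly larger configuration of the same type, contradicting maximality. The dual argument with $\widetilde{A}_i = A_i \cup [b]_> \cup \{b\}$ handles the attractor case. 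The only substantive step is the connectedness in (ii), for which the appeal to Proposition~\ref{futcon}, and thus to the non-recurrence of $f$, is indispensable.
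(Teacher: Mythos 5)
Your proof is correct and follows essentially the same route as the paper: you form the connected repeller $R_i \cup [b]_\leq$ (which you write as $R_i \cup [b]_< \cup \{b\}$), note that it strictly contains $R_i$, and invoke maximality to force an intersection with some other member of $\mathcal{E}$, which by Remark~\ref{br}(\ref{br2}) must be a repeller $R_j$ with $j \neq i$. You phrase the argument as a contradiction and spell out the verification that the enlargement still satisfies the configuration axioms, which the paper treats as immediate, but the underlying idea and all the key ingredients (connectedness of $[b]_\leq$ from Proposition~\ref{futcon} under non-recurrence, pairwise disjointness of $\mathcal{E}$, pasts avoiding attractors) are identical.
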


\begin{proof} Let $b\in B \backslash \cup_{i\in \Z/n\Z} (R_i\cup A_i)$ be adjacent to $R_i$. As both $R_i$ and $[b]_\leq$ are 
connected and they intersect, it
follows that the repeller $R= [b]_\leq \cup R_i$ is connected.  As our configuration is maximal and $R_i \subsetneq R$, 
there exists  $X\in {\cal{E}}\backslash \{R_i\}$, such that $R\cap X \neq \emptyset$ (in $B$).  As the sets in ${\cal{E}}$ are pairwise
disjoint, and $b$ does not belong to $X$, this implies that $[b]_<\cap X\neq \emptyset$ (in $B$).  So, $X = R_j$ for some $j \neq i$
, because  $[b]_\leq$ 
cannot intersect any attractor (see Remark \ref{br}, item \ref{br2}).
The second statement in the lemma is proved analogously.

\end{proof}

We say that a brick $b \in B$ is a {\it connexion brick}\index{brick! connexion} from  $R_j$ to $A_j$  if:
\begin {enumerate}
 \item $b \in B\backslash \cup_{i\in \Z/n\Z} (R_i\cup A_i)$,
\item $b$ is adjacent to $R_j$ and
\item $[b]_>$ contains a brick $b' \in B \backslash  \cup _{i\in \Z/n\Z} (R_i \cup A_i)$ which is adjacent to $A_j$.\\

\end {enumerate}

\begin{figure}[h]\label{conbric}
\begin{center}
\psfrag{x}{$R_j$}\psfrag{y}{$A_j$}\psfrag{b}{$b$}\psfrag{c}{$b'$}\psfrag{d}{$\subset [b]_>$}
{\includegraphics[scale=0.25]{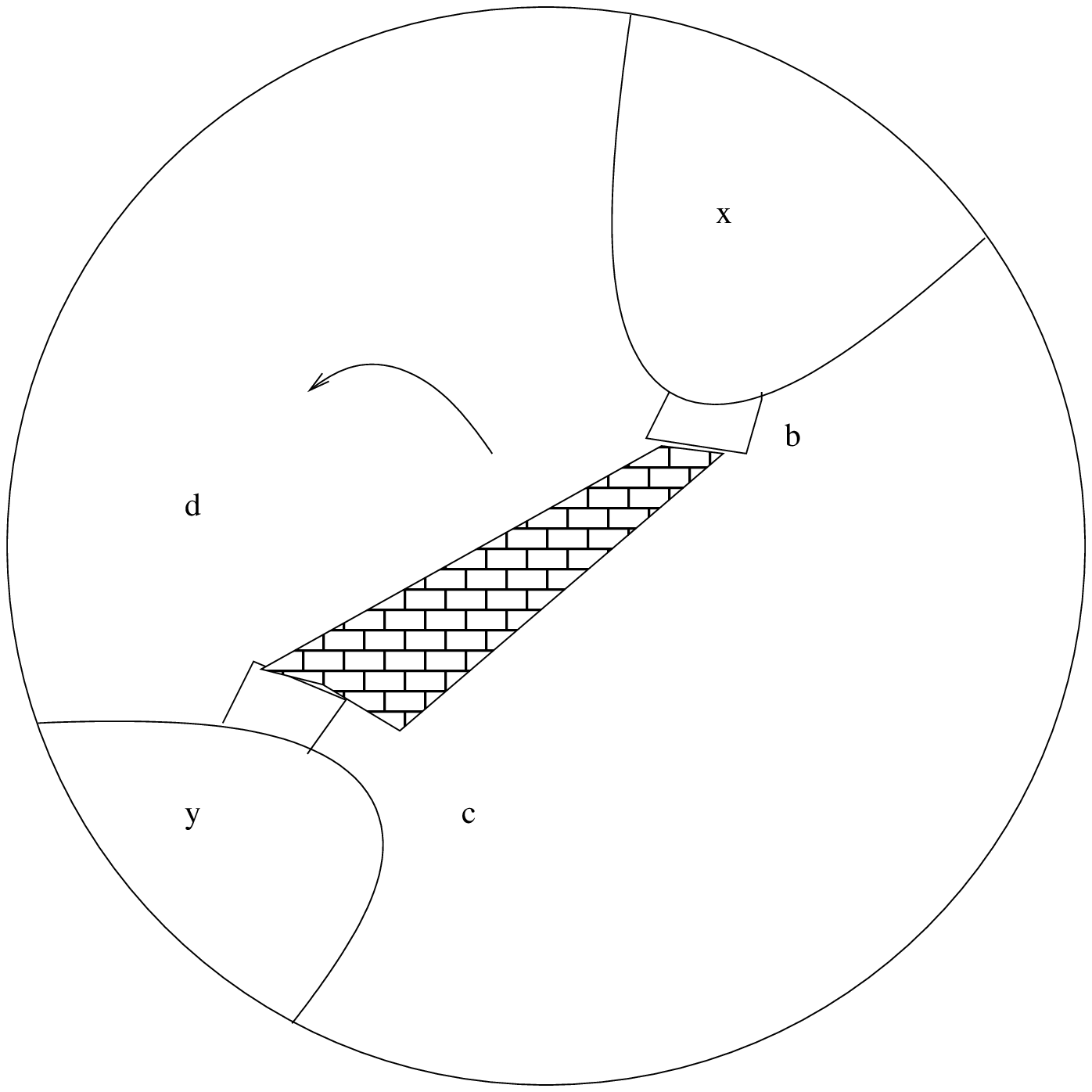}}
\caption{A connexion brick.}
\end{center}
\end{figure}

\begin{lema}\label{ady}  Let $((R_i)_{i\in \Z/n\Z} , (A_i)_{i\in \Z/n\Z})$ be a maximal elliptic or hyperbolic 
configuration. The following two conditions guarantee the existence of a connexion brick from $R_i$ to $A_i$:

\begin{enumerate}
\item\label{adj2} There exists a brick $b\notin \cup_{i\in \Z/n\Z} (R_i\cup A_i)$ which is adjacent to both $R_i$ and $A_i$,
 \item \label{notadj} $R_i$ is not adjacent to $A_i$.
\end{enumerate}

\end{lema}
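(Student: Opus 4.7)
I plan to treat conditions~(1) and~(2) separately within a common three-step scheme: first, identify a candidate $b_0$ for the connexion brick (meaning $b_0\notin\bigcup_j(R_j\cup A_j)$ is adjacent to $R_i$); next, show that $[b_0]_>$ meets $A_i$ but is not contained in $A_i$; finally, extract from the connectedness of $[b_0]_>$ (Proposition~\ref{futcon}) a transition brick $b'\in[b_0]_>\setminus A_i$ adjacent to $A_i$, and verify that $b'\notin\bigcup_j(R_j\cup A_j)$.

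\textbf{Condition 1.} I would take the brick $b$ supplied by the hypothesis as the connexion brick itself; the first two clauses in the definition are then immediate. For the second step, Lemma~\ref{max} applied on the attractor side yields $j\neq i$ with $[b]_>\cap A_j\neq\emptyset$, while Remark~\ref{br}(\ref{br3}) gives $[b]_>\cap A_i\neq\emptyset$; since $A_i$ and $A_j$ are disjoint attractors and so non-adjacent by Remark~\ref{br}(\ref{br4}), the set $[b]_>$ straddles the boundary of $A_i$. In the third step, I would walk in $[b]_>$ along an adjacency-chain from $A_j$ toward $A_i$; the last brick before entering $A_i$ would play the role of $b'$. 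That $b'\notin\bigcup_j(R_j\cup A_j)$ follows because $b'$ is adjacent to $A_i$ (hence avoids $A_{j'}$ for $j'\neq i$ by Remark~\ref{br}(\ref{br4})), and because $b'\in R_k$ would, via Remark~\ref{br}(\ref{br1}) and $b\in[b']_<$, force the contradiction $b\in R_k$.

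\textbf{Condition 2.} Assuming $R_i$ is not adjacent to $A_i$, the configuration property provides a brick $b_*\in R_i$ with $[b_*]_>\cap A_i\neq\emptyset$. The plan here is to produce a brick $b_0$ adjacent in $B$ to $R_i$, with $b_0\notin\bigcup_j(R_j\cup A_j)$ and $[b_0]_>\cap A_i\neq\emptyset$; then the last two steps of the common scheme would proceed exactly as in Condition~1. Any brick adjacent to $R_i$ automatically avoids every $R_k$ (Remark~\ref{br}(\ref{br4})) and, by the non-adjacency hypothesis, avoids $A_i$; so only membership in $A_{j}$, $j\neq i$, has to be excluded, which the maximality of the configuration combined with Lemma~\ref{max} should achieve. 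Once $b_0$ is in hand, that $[b_0]_>\not\subset A_i$ will follow by applying Lemma~\ref{max} to a brick of $[b_0]_>$ adjacent to $A_i$, obtaining a brick of $[b_0]_>$ in some $A_{j'}$, $j'\neq i$, and the transition brick $b'$ is then extracted as before.

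\textbf{Main obstacle.} The hard part will be Condition~2: converting the purely dynamical datum $[b_*]_>\cap A_i\neq\emptyset$, with $b_*\in R_i$, into the existence of a brick actually adjacent in $B$ to $R_i$ with the same forward property. The $\varphi$-chain from $b_*$ does leave $R_i$ at some step, but the exit brick is related to its predecessor by $f$, not by adjacency in the brick decomposition; bridging this gap, using the maximality of the configuration together with Remark~\ref{br}(\ref{br4}) to place the bricks of $N_M(R_i)$ outside $\bigcup_j(R_j\cup A_j)$, is where I expect the subtlety to concentrate.
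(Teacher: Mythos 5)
Your treatment of Condition~\ref{adj2} is correct and takes a route genuinely dual to the paper's: where the paper regards the given brick as the ``future end'' $b'$ and digs in $[b']_<$ (a connected repeller meeting $R_i$ and some $R_j$, $j\neq i$, and avoiding all attractors) to find the connexion brick $b$, you keep the given brick as the connexion brick $b$ and dig in $[b]_>$ to find $b'$. Your checks that the transition brick $b'$ lies outside every $A_{j'}$ (non-adjacent attractors, Remark~\ref{br}(\ref{br4})) and every $R_k$ (via $b\in[b']_\leq$ and Remark~\ref{br}(\ref{br1})) are sound. This is a valid alternative for that case.

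For Condition~\ref{notadj}, however, you have a genuine gap, and it is exactly the one you flag under ``Main obstacle.'' Your plan is to first produce a brick $b_0$ adjacent to $R_i$, outside $\bigcup_j(R_j\cup A_j)$, with $[b_0]_>\cap A_i\neq\emptyset$, and then repeat the Condition~1 scheme. Two things go wrong. First, taking the first exit brick of $[b_*]_\geq$ out of $R_i$ along an adjacency path does give a brick adjacent to $R_i$ that avoids every $R_k$ (non-adjacent repellers) and avoids $A_i$ (by the non-adjacency hypothesis), but nothing prevents it from lying in some $A_j$ with $j\neq i$ — a repeller and an attractor of the configuration \emph{can} be adjacent, and Remark~\ref{br}(\ref{br4}) does not exclude it. Second, even granting $b_0$, your plan to find $b'$ requires a brick of $[b_0]_>$ adjacent to $A_i$, hence \emph{outside} $A_i$; but $[b_0]_>\subset A_i$ is not ruled out, and your proposed way of ruling it out (``applying Lemma~\ref{max} to a brick of $[b_0]_>$ adjacent to $A_i$'') presupposes exactly the brick you are trying to produce. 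So the argument is circular at that point.

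The paper resolves Condition~\ref{notadj} by reversing the order of construction, and this is the idea your proposal is missing. Starting from $b_i\in R_i$ with $[b_i]_\geq\cap A_i\neq\emptyset$, the connected set $[b_i]_\geq$ (not the strict future) contains $b_i\in R_i$ and meets $A_i$, hence contains a brick $b'$ adjacent to $A_i$. This $b'$ is automatically outside \emph{all} the configuration sets: not in $A_i$ or any other $A_j$ since it is adjacent to $A_i$; not in $R_i$, else $R_i$ would be adjacent to $A_i$; not in $R_j$ for $j\neq i$, since $b_i\in[b']_\leq$ would then lie in $R_j$. Only \emph{then} does one pass to $[b']_\leq$, which is connected, contains $b_i\in R_i$, and contains $b'\notin R_i$, hence contains a brick $b$ adjacent to $R_i$; and $[b']_\leq$ meets no attractor at all, so $b\notin\bigcup_j(R_j\cup A_j)$. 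If $b\in[b']_<$ one is done; if $b=b'$ one falls back on Condition~\ref{adj2}. Producing $b'$ first, from the non-strict future of a brick \emph{inside} $R_i$, is what dissolves the obstacle you correctly identified.
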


\begin{proof} \ref{adj2}. Let $b'\notin \cup_{i\in \Z/n\Z} (R_i\cup A_i)$ be adjacent to both $R_i$ and $A_i$. As a subset of
$B$, the repeller $[b']_<$ meets a repeller $R_j$ different from $R_i$ (Lemma \ref{max}), meets $R_i$ because
$b'$ is adjacent to $R_i$ (Remark \ref{br}, item \ref{br3}), and does not meet any $A_j$, $j\in \Z/n\Z$ (Remark \ref{br}, 
item \ref{br2}). As it is connected, $[b']_<$ contains
a brick $b$ which is adjacent to $R_i$, which implies that $b \notin \cup_{i\in \Z/n\Z} (R_i\cup A_i)$
(Remark \ref{br}, item \ref{br4}).
As $b'\in [b]_>$, and $b'$ is adjacent to $A_i$, $b$ is a connexion brick from $R_i$ to $A_i$.

\ref{notadj}. Assume that $R_i$ is not adjacent to $A_i$. We know there exists $b_i\in R_i$ such that
$ [b_i]_\geq\cap A_i\neq \emptyset$. As $[b_i]_\geq$ is connected, it  contains a brick $b'$ adjacent to $A_i$. This 
brick $b'$ is not contained in $R_i$; otherwise, $R_i$ would be adjacent to $A_i$. Neither it is contained in any attractor or 
in any  repeller other that $R_i$ (Remark \ref{br}, items \ref{br2} and \ref{br4}). Therefore,  
$b'\notin \cup_{i\in \Z/n\Z} (R_i\cup A_i)$ .

As $b_i\in [b']_\leq$ and $[b']_\leq$ is connected, $[b']_\leq$  contains a brick $b$ adjacent to $R_i$. If $b\in [b']_<$, 
then $b$ is a connexion brick from $R_i$ to $A_i$ (again, $b\notin \cup_{i\in \Z/n\Z} (R_i\cup A_i)$ by Remark \ref{br}, items
\ref{br2} and \ref{br4}). If $b=b'$, then $b$ is adjacent to both $R_i$ and $A_i$ and we are done by the previous item.

\end{proof}

\begin{obs} Connexion bricks do not always exist; figure 6 exhibits an example. Of course, none of the conditions of Lemma \ref{ady}
is satisfied.  Indeed, in this example $\cup_{i\in \Z/2\Z} (R_i\cup A_i) = B$ and $R_i$ is adjacent to $A_i$ for all 
$i\in \Z/2\Z$.

\end{obs}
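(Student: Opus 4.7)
The plan is to justify the remark by directly reading off the properties of the brick decomposition pictured in Figure 6. No clever argument is needed; this is essentially a verification that the drawn example really is a witness to the two failures being claimed.

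First, I would record the combinatorial content of the figure. In this example the four sets $R_0,R_1,A_0,A_1$ together exhaust every brick of the decomposition, i.e. $B=R_0\cup R_1\cup A_0\cup A_1$. Once this is noted, the non-existence of a connexion brick follows immediately from its definition: any connexion brick from $R_j$ to $A_j$ must lie in $B\setminus \cup_{i\in \Z/2\Z}(R_i\cup A_i)$, and here this set is empty.

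Next I would verify that neither hypothesis of Lemma \ref{ady} holds. Condition (\ref{adj2}) asks for a brick outside $\cup_{i\in \Z/2\Z}(R_i\cup A_i)$ that is adjacent to both $R_i$ and $A_i$; this is vacuously impossible since the complement is empty. Condition (\ref{notadj}) would require $R_i$ not to be adjacent to $A_i$; but inspection of Figure 6 shows that near the saddle point each repeller $R_i$ meets each attractor $A_i$ along a common edge of the decomposition (the local stable/unstable separatrices), so $R_i$ is in fact adjacent to $A_i$ for every $i\in\Z/2\Z$. Hence both sufficient conditions of Lemma \ref{ady} fail, consistent with the absence of connexion bricks.

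The only point needing any real care — and it is minor — is to make certain that Figure 6 genuinely corresponds to an admissible brick decomposition of $\D\setminus \fix(f)$ exhibiting the stated adjacencies. This can be arranged explicitly by choosing the skeleton $\Sigma(\mathcal{D})$ to consist of small arcs transverse to the four local separatrices of the saddle, producing four bricks (two ``repeller'' bricks along the stable manifold, two ``attractor'' bricks along the unstable manifold) which meet pairwise at the four trivalent vertices adjacent to the fixed point. This confirms the example and completes the justification.
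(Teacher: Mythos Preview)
Your first three paragraphs are exactly what the paper does: the remark in the paper has no separate proof --- it simply states that in Figure~6 one has $\cup_{i\in \Z/2\Z}(R_i\cup A_i)=B$ and that $R_i$ is adjacent to $A_i$ for all $i$, and lets the reader see that this kills both hypotheses of Lemma~\ref{ady} and the very definition of a connexion brick. So the substance of your justification matches the paper.

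Your last paragraph, however, is off. You propose realizing the picture with exactly four bricks (one per set $R_0,R_1,A_0,A_1$) meeting at ``four trivalent vertices adjacent to the fixed point.'' This cannot be a \emph{free} brick decomposition: near a saddle each quadrant-like region is forward- or backward-invariant, so a single brick equal to an entire quadrant satisfies $f(b)\cap b\neq\emptyset$ and is not free. The picture in Figure~6 (compare the model decomposition of $\R^2\setminus\{0\}$ in Figure~2(b)) should be read as having each $R_i$ and $A_i$ composed of infinitely many small free bricks accumulating on the removed fixed point; the adjacency $R_i\sim A_i$ then comes from neighbouring bricks sharing edges along the separatrix directions. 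Drop the four-brick construction and your justification is fine.
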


\subsection{The elliptic case.}

The following consequences of the elliptic order property will be used in the proof of Proposition \ref{hc}:

\begin{lema}\label{eop} Let $((R_i)_{i\in \Z/n\Z} , (A_i)_{i\in \Z/n\Z})$ be an elliptic configuration.

\begin{enumerate}
 \item\label{eop1} If $C\subset B$ is a connected set containing both $R_i$ and $A_i$,  and $C\cap (R_{i+1}\cup A_{i+1}) =
\emptyset$ in $B$,
then  $R_{i+1}$ and $A_{i+1}$ belong to different connected components of 
$\D\backslash \inte (C)$; in particular $R_{i+1}\cap A_{i+1} = \emptyset$ in $\D$.

\item \label{eop2} If $C\subset B$ is a connected set containing both $R_i$ and $R_{i+1}$, and $C\cap 
(R_{i-1}\cup A_{i-1})=\emptyset$ in $B$,
then $R_{i-1}$ and 
$A_{i-1}$ belong to different connected components of $\D\backslash \inte (C)$; in particular 
$R_{i-1}\cap A_{i-1} = \emptyset$ in $\D$.

\item \label{eop3} If $C\subset B$ is a connected set containing every repeller $R_i$, and disjoint (in $B$) from every
attractor $A_i$, then the $n$ attractors $\{A_i\}$ belong to $n$ different connected components of $\D\backslash \inte (C)$.
\end{enumerate}

\end{lema}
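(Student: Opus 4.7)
All three assertions are planar-separation statements driven by the cyclic order of the sets $r_j, a_j$ on $\partial\D$ prescribed by the elliptic order property. The common engine is that a compact connected subset $K\subset\overline\D$ meeting $\partial\D$ in two (or more) disjoint arcs separates any two connected subsets of $\overline\D\setminus K$ whose boundary footprints lie in different arcs of $\partial\D\setminus K$.

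\textbf{Step 1 (unwinding the cyclic order).} The elliptic order $a_0, r_2, a_1, r_3, \ldots, a_{n-1}, r_1$ on $\partial\D$ yields the following linkings. For (\ref{eop1}), the four sets $r_i, a_i, r_{i+1}, a_{i+1}$ appear on $\partial\D$ in the cyclic order $r_i, r_{i+1}, a_i, a_{i+1}$, so the pair $\{r_i, a_i\}$ is linked with $\{r_{i+1}, a_{i+1}\}$. For (\ref{eop2}), $r_{i-1}, a_{i-1}, r_i, r_{i+1}$ appear in cyclic order $r_{i-1}, r_i, a_{i-1}, r_{i+1}$, so $\{r_i, r_{i+1}\}$ is linked with $\{r_{i-1}, a_{i-1}\}$. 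For (\ref{eop3}), the cyclic order forces each pair of distinct attractors $a_j, a_k$ to be separated on $\partial\D$ by at least one repeller on each side, so $\{a_j, a_k\}$ is linked with some pair of repellers already contained in $C$.

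\textbf{Step 2 (separation).} I detail (\ref{eop1}); the other two items are identical modulo relabeling. Set $K:=\overline{\inte(C)}$, closure taken in $\overline\D$. Since $C$ is a connected union of bricks, $\inte(C)$ is a connected open subset of $\D$ (by the characterization recalled in Section \ref{bricks}), so $K$ is compact and connected. The hypotheses $r_i\subset\inte(R_i)$, $a_i\subset\inte(A_i)$ combined with $R_i,A_i\subset C$ give $r_i\cup a_i\subset\inte(C)$; hence $K\supset\overline{r_i}\cup\overline{a_i}$ and $K$ contains points $p\in\overline{r_i}\cap\partial\D$ and $q\in\overline{a_i}\cap\partial\D$. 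By Step 1, $\overline{r_{i+1}}\cap\partial\D$ and $\overline{a_{i+1}}\cap\partial\D$ lie in the two distinct open arcs of $\partial\D\setminus\{p,q\}$. Because $R_{i+1}$ and $A_{i+1}$ share no bricks with $C$, their interiors in $\D$ are disjoint from $\inte(C)$ and, being open, also from $K$. A standard planar-topology argument (Janiszewski's theorem applied in $S^2\supset\overline\D$, or equivalently the Carath\'eodory extension of the Riemann map on $U=\D\setminus\bigcup_j(r_j\cup a_j)$ from Section \ref{co}) then places $\inte(R_{i+1})$ and $\inte(A_{i+1})$ into distinct components of $\overline\D\setminus K$. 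Hence $R_{i+1}$ and $A_{i+1}$ lie in distinct components of $\D\setminus\inte(C)$; in particular they are disjoint in $\D$.

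\textbf{Main obstacle.} The cyclic-order bookkeeping is mechanical; the genuine work is the planar-separation step. The delicate point is to align the natural topological statement, which lives in $\overline\D$ and describes $\overline\D\setminus K$, with the desired conclusion in $\D\setminus\inte(C)$: one needs $C=\overline{\inte(C)}$ (which holds because $C$ is a union of bricks whose one-skeleton $\partial C$ is a one-dimensional submanifold) and the observation that $\partial C$ cannot carry an arc between two open sets on opposite sides of $K$. I would resolve both issues by working directly in the prime-end compactification of $U$ set up in Section \ref{co}, where the linkings of Step 1 become linkings on $S^1$ and the separation reduces to the Jordan curve theorem in the model disk.
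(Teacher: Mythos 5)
Your proposal follows essentially the same route as the paper: both reduce each item to the observation that $\inte(C)$ is a connected open set meeting $\D$ near the right prime-end arcs, and then invoke the cyclic order at infinity from Section \ref{co} to conclude that the remaining $r_j$'s and $a_j$'s are scattered into distinct components of $\D\setminus\inte(C)$, finishing by passing from $r_j$, $a_j$ to $\inte(R_j)$, $\inte(A_j)$ and then to $R_j$, $A_j$ because the components are closed. The paper's proof is terser and works with $\D\setminus\inte(C)$ directly instead of the closure $K$ in $\overline\D$, so it bypasses the re-alignment issue you flag in your last paragraph; that issue does resolve in the way you indicate, since $\partial C$ is a one-manifold so no two components of $\D\setminus C$ share a boundary point, making the components of $\D\setminus\inte(C)$ precisely the closures in $\D$ of the components of $\D\setminus C$.
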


\begin{proof}
\begin{enumerate}
 \item First we remark that $C\cap (R_{i+1}\cup A_{i+1}) =
\emptyset$ in $B$ implies $\inte (R_{i+1}) \cap \inte (C) = \emptyset$ and $\inte (A_{i+1}) \cap \inte (C) = \emptyset$. Besides, 
$\inte (C)$ is  a connected set  containing both $r_i$ and $a_i$. So, the elliptic order property implies
that $r_{i+1}$ and $a_{i+1}$ belong  to different connected components of $\D\backslash \inte (C)$. Now, $\inte (R_{i+1})$ and 
$\inte (A_{i+1})$ belong to different connected components of $\D\backslash \inte (C)$.  As each connected component of  
$\D\backslash \inte (C)$ is closed (in $\D$), we obtain that $R_{i+1}$ and $A_{i+1}$ belong to different connected components of 
$\D\backslash \inte (C)$; in particular $R_{i+1}\cap A_{i+1} = \emptyset$ in $\D$.

\item As before, we know that  $\inte (R_{i-1}) \cap \inte (C) = \emptyset$ and $\inte (A_{i-1}) \cap \inte (C) = \emptyset$.
Besides,  
$ \inte (C)$ is a connected set containing both $r_i$ and $r_{i+1}$.  So, the elliptic order property implies
that $r_{i-1}$ and $a_{i-1}$ belong  to different connected components of $\D\backslash \inte(C)$. It follows that $\inte (R_{i-1})$ and 
$\inte (A_{i-1})$ belong to different connected components of $\D\backslash \inte (C)$, and we conclude as in the preceding item.

\item As before, we know that  $\inte (A_i) \cap \inte (C) = \emptyset$ for all $i\in \Z/n\Z$. Furthermore, $ \inte (C)$ is a 
connected set containing $r_i$ for all $i\in \Z/n\Z$.  So, the elliptic order property implies
that each $a_i$, $i\in \Z/n\Z$ belong to a different connected component of $\D\backslash \inte (C)$. It follows that each 
$\inte (A_i)$, 
$i\in \Z/n\Z$, belong to a different connected component of $\D\backslash \inte (C)$, and we conclude as in the preceding item.

\end{enumerate}

\end{proof}

\begin{lema}\label{bcon} Let $((R_i)_{i\in \Z/n\Z} , (A_i)_{i\in \Z/n\Z})$ be a maximal elliptic configuration. Then, for
 some $i \in \Z / n\Z$ there exists a connexion brick from $R_i$ to $A_i$.
 
\end{lema}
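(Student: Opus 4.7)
The plan is to argue by contradiction, playing the adjacencies forced at consecutive indices against each other via the elliptic cyclic order. I would assume that no $i \in \Z/n\Z$ admits a connexion brick from $R_i$ to $A_i$ and aim for a geometric impossibility.

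The first step extracts the content of Lemma \ref{ady}. Item \ref{notadj} of that lemma says that if $R_i$ is not adjacent to $A_i$, then a connexion brick from $R_i$ to $A_i$ exists; its contrapositive, applied to the standing assumption at every index, forces $R_i$ to be adjacent to $A_i$ in $B$ for all $i \in \Z/n\Z$. In particular, fixing any $i$, the set $C := R_i \cup A_i$ is a connected subset of $B$ (since $R_i$ and $A_i$ are each connected and adjacent in $B$), and by the pairwise disjointness of the members of ${\cal E}$ we have $C \cap (R_{i+1} \cup A_{i+1}) = \emptyset$ in $B$.

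The second step invokes Lemma \ref{eop}, item \ref{eop1}, with this $C$: its conclusion is that $R_{i+1}$ and $A_{i+1}$ lie in different connected components of $\D \setminus \inte(C)$, and in particular $R_{i+1} \cap A_{i+1} = \emptyset$ as subsets of $\D$. However, adjacency in $B$ of $R_{i+1}$ and $A_{i+1}$ (which was established in the first step applied to the index $i+1$) requires them to share at least one point of $\D$. This is the desired contradiction.

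The substance of the proof is really the combination of these two facts: the geometric implication ``$R_i$ adjacent to $A_i$ at one index forbids $R_{i+1}$ adjacent to $A_{i+1}$ at the next'' comes directly from the elliptic order via Lemma \ref{eop}, while Lemma \ref{ady} provides the two independent sufficient conditions that make the simultaneous absence of connexion bricks commit us to the forbidden adjacency at every index. No serious obstacle is expected; condition \ref{adj2} of Lemma \ref{ady} does not even enter the argument, and the maximality hypothesis is needed only through its use in the statement of Lemma \ref{ady}.
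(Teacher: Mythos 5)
Your proof is correct and follows essentially the same route as the paper: both reduce via Lemma \ref{ady} (item \ref{notadj}) to showing some $R_i$ is not adjacent to $A_i$, then use Lemma \ref{eop} (item \ref{eop1}) to show that adjacency of $R_i$ and $A_i$ forces $R_{i+1}\cap A_{i+1}=\emptyset$ in $\D$, hence non-adjacency at the next index. The only cosmetic difference is that you phrase it as a global contradiction while the paper phrases it as a direct dichotomy; the substance is identical.
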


\begin{proof} Because of lemma \ref{ady}, it is enough to show that for some $i \in \Z / n\Z$,  $R_i$ is not adjacent to
$A_i$.

If $R_i$ is adjacent to $A_i$, then $C = R_i\cup A_i $ is a connected set containing $R_i$ and $A_i$.  Besides,  
$C\cap (R_{i+1}\cup A_{i+1}) =\emptyset$ in $B$, because the sets in ${\cal E}$ are pairwise disjoint.  So, item \ref{eop1} of the preceeding lemma tells us that
$R_{i+1}\cap A_{i+1} = \emptyset$ in $\D$. In particular, $R_{i+1}$ cannot be adjacent to $A_{i+1}$.  
\end{proof}

The following lemma tells us that it is enough to prove Proposition \ref{hc} for configurations of order $n = 3$:

\begin{lema}  Let $((R_i)_{i\in \Z/n\Z} , (A_i)_{i\in \Z/n\Z})$ be an elliptic configuration of order $n>3$. Then, there
exists an elliptic configuration $((R'_i)_{i\in \Z/(n-1)\Z} , (A'_i)_{i\in \Z/(n-1)\Z})$ of order $n-1$.
\end{lema}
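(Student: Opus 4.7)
The plan is to reduce the order by merging two cyclically consecutive pairs into a single pair, while keeping the other $n-2$ pairs unchanged. Fix an index $j \in \Z/n\Z$; I define the merged pair to contain $(R_{j-1} \cup R_j, A_{j-1} \cup A_j)$, possibly augmented by bridging bricks to secure connectedness (see below), and carry the pairs $(R_i, A_i)$ for $i \in \Z/n\Z \setminus \{j-1, j\}$ over unchanged. After relabeling by indices in $\Z/(n-1)\Z$, I choose representatives $r'_0 = r_{j-1}$ and $a'_0 = a_j$ for the merged pair and keep the original $r_i, a_i$ for the unmerged ones. This has the effect of deleting precisely the two items $r_j$ and $a_{j-1}$ from the cyclic list of representatives on $\partial \D$; by the elliptic order property these occupy cyclically adjacent positions (positions $2j-3$ and $2j-2$ in the standard labeling), so the surviving $2(n-1)$ representatives still alternate, and a direct index calculation shows that the resulting cyclic sequence matches the elliptic order property $a'_j \to r'_{j+2}$ for $n-1$ pairs.

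The routine checks go through. The complement $\D \setminus \bigcup (r'_i \cup a'_i)$ is obtained from the original connected complement $U$ by adjoining the two removed connected sets $r_j$ and $a_{j-1}$, each meeting $\overline{U}$, so it remains connected. Each unmerged pair inherits its orbit witness from the original configuration; for the merged pair, the original witness $b_j \in R_j$ with $[b_j]_\geq \cap A_j \neq \emptyset$ still serves, since $R_j$ and $A_j$ lie in the new merged sets. Disjointness of the unmerged pairs from the merged one in $B$ follows from the original pairwise disjointness, provided the bridging bricks we add are chosen outside the other pairs.

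The main obstacle is connectedness of the merged sets as subsets of $B$. By Remark~\ref{br}\eqref{br4}, two disjoint repellers (resp.\ attractors) in a configuration cannot be adjacent, so the naive union $R_{j-1} \cup R_j$ is generically disconnected in $B$. To remedy this, I would adjoin to $R_{j-1} \cup R_j$ the past $[b]_\leq$ of a brick $b$ chosen adjacent to $R_j$ (and lying outside every $R_k \cup A_k$) whose strict past meets $R_{j-1}$; this yields a connected repeller, and a symmetric construction on the attractor side produces a connected $A'_0$. Existence of such a bridging brick for an appropriately chosen $j$ is where the real work lies: for a maximal elliptic configuration Lemma~\ref{max} says that any brick adjacent to $R_j$ outside the configuration has strict past meeting some $R_k$ with $k \neq j$; iterating over $j$ and using the cyclic constraints of Lemma~\ref{eop} (especially parts~\eqref{eop1} and~\eqref{eop2}) forces at least one choice of $j$ to yield $k = j - 1$, because the elliptic cyclic order confines the past of bricks adjacent to $R_j$ into one of two arcs of $\D \setminus \inte(R_{j-1} \cup R_j)$. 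The non-recurrence hypothesis prevents these bridges from creating closed chains, and the assumption $n > 3$ provides sufficient room in the cyclic order for the bridging orbits to be placed without disturbing the other pairs. This cyclic/topological argument for the existence of a compatible $j$ and bridging orbit is the step I expect to be technically hardest.
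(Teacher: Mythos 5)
Your approach and the paper's are genuinely different, and yours has a gap that the paper's route avoids entirely.

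The paper does not merge anything. It keeps the original repellers and attractors as they are, simply \emph{discards} $R_1$ and $A_0$, and pairs $R_0$ with $A_1$ to form the new configuration: $R'_0 = R_0$, $R'_i = R_{i+1}$ for $1\le i\le n-2$, $A'_i = A_{i+1}$ for $0\le i\le n-2$. All the axioms of an elliptic configuration for the unchanged pairs, the pairwise disjointness, the cyclic order (after deleting the two representatives $r_1,a_0$), and the connectedness of the complement are then immediate, because the underlying sets in $B$ are a subfamily of the original ones. The only thing that needs to be produced is the new witness brick for the merged \emph{pair} $(R_0, A_1)$, i.e.\ a brick $b\in R_0$ with $[b]_\geq\cap A_1\neq\emptyset$. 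The paper gets it by applying Lemma~\ref{eop}, item~\ref{eop1}, to the connected set $C=R_0\cup[b_0]_\geq\cup A_0$: either $[b_0]_\geq$ hits $A_1$ directly, or $\inte(C)$ separates $R_1$ from $A_1$, in which case the connected set $R_1\cup[b_1^+]_\leq\cup A_1$ (with $b_1^+\in[b_1]_\geq\cap A_1$) must meet $C$; chasing through Remark~\ref{br}, item~\ref{br2}, gives the required $b$.

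Your plan to literally merge $R_{j-1}\cup R_j$ (resp.\ $A_{j-1}\cup A_j$) creates the connectedness problem you acknowledge, and the bridging step does not close. Lemma~\ref{max} (which moreover presupposes a \emph{maximal} configuration, not assumed in the statement) only tells you that the strict past of a brick adjacent to $R_j$ meets \emph{some} $R_k$ with $k\neq j$; nothing forces $k=j-1$, and even if it did, $[b]_\leq$ is under no obligation to avoid the remaining $R_m$'s, $m\notin\{j-1,j\}$. Adjoining such a past to the merged repeller would then break the pairwise disjointness in $B$ that the configuration axioms require. Your sketch ``iterating over $j$ forces $k=j-1$'' is exactly the unproved core, and as far as I can see there is no cyclic-order argument that rules out the past landing on a non-adjacent repeller. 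The simpler observation that resolves all of this is that \emph{merging is not needed at all}: the lemma only asserts existence of some order-$(n-1)$ configuration, so you are free to throw away $R_1$ and $A_0$ and re-pair, which sidesteps every connectedness and disjointness issue.
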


\begin{proof} We claim that there exists a brick $b\in R_0$ such that $[b]_\geq \cap A_1\neq \emptyset$. Indeed, 
$$(R_0 \cup [b_0]_\geq\cup A_0)\cap R_1 = \emptyset \textrm{ in } B,$$ \noindent by Remark \ref{br}, item \ref{br2} (we recall
that for all $i\in\Z/n\Z$ there exists $b_i\in R_i$ such that $[b_i]_\geq \cap A_i\neq \emptyset$).  So,
 Lemma 
\ref{eop}, item \ref{eop1}  implies that either $$(R_0 \cup [b_0]_\geq
\cup A_0)\cap A_1\neq \emptyset \textrm{\ in \ } B,$$\noindent or $\inte (R_0 \cup [b_0]_\geq\cup A_0)$ separates $R_1$ from 
$A_1$ (recall that $b_0\in R_0, [b_0]_{\geq}\cap A_0 \neq \emptyset$,  and that the future of any brick is connected).
In the first case, necessarily $$ [b_0]_{\geq}\cap A_1\neq  \emptyset \textrm{\ in \ } B,$$ \noindent and we take $b = b_0$.
In the second case, we obtain $$(R_0 \cup [b_0^-]_{\geq} \cup A_0)\cap (R_1\cup [b_1^+]_{\leq}\cup A_1)\neq
\emptyset  \textrm{\ in \ } B,$$ \noindent where $b_{1}^+\in [b_1]_{\geq}\cap A_1$ .  By Remark \ref{br}, item \ref{br2}, we
 know that $[b_0]_{\geq} \cap R_1 = \emptyset$ and 
$[b_1^+]_{\leq} \cap A_0 = \emptyset$. So, in fact 
$$(R_0 \cup [b_0]_{\geq} )\cap ([b_1^+]_{\leq}\cup A_1)\neq \emptyset  \textrm{\ in \ } B.$$ 
If $R_0\cap [b_1^+]_{\leq}\neq \emptyset$ in $B$, we
take any brick $b\in R_0\cap [b_1^+]_{\leq}$; if $[b_0]_{\geq}\cap ([b_1^+]_{\leq} \cup A_1)\neq \emptyset$ in $B$, we take 
$b = b_0$. (Note that $b\in [b_1^+]_{\leq}$ implies $b_{1}^+\in  [b]_\geq\cap A_1$). This finishes the proof of our claim.

Now, by defining $$R'_0 = R_0, \ R'_i = R_{i+1}\textrm{ for } 1\leq i\leq n-2,$$ 
$$A'_i = A_{i+1} \textrm{ for } 0\leq i \leq n-2,$$ we are done.
 
\end{proof}

We are now ready to prove Proposition \ref{hc} :

\begin{proof} Because of the previous lemma, we can suppose that there exists an elliptic configuration of order $n=3$ and  
take a maximal one $$((R_i)_{i\in \Z/3\Z} , (A_i)_{i\in \Z/3\Z}).$$ \noindent We will show that our assumption that $f$ is not recurrent
contradicts the maximality of this configuration.  Lemma \ref{bcon} allows us to consider a connexion brick
$b$ from $R_i$ to $A_i$, for some $i\in \Z/3\Z$, and there is no loss of generality in supposing $i = 0$. Let $b' \in B 
\backslash \cup_{i\in \Z/3\Z} (R_i\cup A_i)$ be adjacent to $A_0$ and such that $b' \in [b]_>$. We will first show that
$[b]_<$ meets every repeller and no attractor in the configuration.  Then, by defining $A'_i$ as to be the connected component of
$B\backslash (\cup _{i\in \Z/3\Z} R_i\cup [b]_<)$ containing $A_i$, we will be able to show that 
$((R_i)_{i\in \Z/3\Z}, (A'_i)_{i\in \Z/3\Z})$ is an elliptic configuration strictly bigger than the initial configuration, due
to the fact that $b'\in A'_0\backslash A_0$.

Indeed, we know by Lemma \ref{max} that $[b]_\leq 
\cap R_{j}\neq \emptyset$ for some $j\in \{1,2\}$.  We 
will suppose $[b]_\leq \cap R_{1}\neq \emptyset$; the proof is analogous in the other case. We claim that this implies 
$[b]_\leq \cap R_{2} \neq \emptyset$.  To see this, note that item \ref{eop2} of Lemma \ref{eop} implies
$$R\cap (R_2\cup [b_2]_{\geq}\cup A_2)\neq \emptyset,$$\noindent where
$$R= R_0\cup [b]_\leq \cup R_{1}.$$
 So, actually  
$$[b]_\leq\cap [b_2]_\geq \neq \emptyset,$$  \noindent which implies $[b]_\leq \cap R_{2} \neq \emptyset$.

We have obtained that $R'= \cup _{i\in \Z/3\Z} R_i \cup [b]_\leq$ is a connected repeller disjoint (in $B$)
from every attractor $A_i$, $i\in \Z/3\Z$ (Remark \ref{br}, item \ref{br2}).   Let  $A_j '$ be the connected component of
$B\backslash R'$ containing $A_j$ for all $j\in \Z/3\Z$. Then, the  sets  $A_j '$ $j\in \Z/3\Z$ are pairwise disjoint (in $\D$) by the 
elliptic order property. We know that $b'\in B\backslash R'$; otherwise, we would have $b'\in [b]_\leq$ as $b' \notin \cup_{i\in \Z/3\Z}
(R_i\cup A_i)$, which is impossible because $b' \in [b]_>$ and we are supposing that $f$ is non-recurrent. So, $A_0$ is strictly 
contained in $A_0 '$ and we deduce that  $((R_i)_{i\in \Z/n\Z} , (A_i ')_{i\in \Z/3\Z})$ is 
an elliptic configuration strictly greater than $((R_i)_{i\in \Z/3\Z} , (A_i)_{i\in \Z/3\Z})$, contradicting the maximality
of the configuration.

\end{proof}

\subsection{The hyperbolic case.}

In what follows, we deal with the hyperbolic case. The proof of the following lemma is analogous to that of Lemma \ref{eop},
substituting of course the elliptic order property by the hyperbolic order property.

\begin{lema}\label{hop} Let $((R_i)_{i\in \Z/n\Z} , (A_i)_{i\in \Z/n\Z})$ be a hyperbolic configuration.

If $C\subset B$ is a connected set containing $R_i$ and $R_{i+1}$, and $C\cap A_m=\emptyset$ in
$B$ for all $m\in \Z/n\Z$, then $\inte (C)$ separates (in $\D$) $A_i$ from any $A_j$, $j\neq i$.

\end{lema}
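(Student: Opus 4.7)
The plan is to mimic the proof of Lemma \ref{eop}, substituting the hyperbolic order property for the elliptic one. First I would reduce the separation statement to a statement about the open set $\inte(C)$ and the ``small'' sets $r_m, a_m$ lying inside the $R_m, A_m$. The hypothesis $C\cap A_m = \emptyset$ in $B$ implies $\inte(A_m)\cap \inte(C) = \emptyset$ in $\D$ for every $m\in \Z/n\Z$, and in particular $\inte(C)$ is disjoint from each $a_m$. Moreover $\inte(C)$ is connected, and contains both $r_i$ (which is in $\inte(R_i)$) and $r_{i+1}$ (which is in $\inte(R_{i+1})$).

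Next I would invoke the cyclic order on $\partial\D$ associated to the family $(r_m, a_m)_{m\in\Z/n\Z}$ as set up in Section \ref{co}. The hyperbolic order property
$$r_0\to a_0\to r_1\to a_1\to\cdots\to r_{n-1}\to a_{n-1}\to r_0$$
shows that the two arcs of $\partial\D$ cut out by (the prime-end intervals associated to) $r_i$ and $r_{i+1}$ contain, respectively, only $a_i$ and the whole collection $\{a_j : j\neq i\}\cup\{r_m : m\neq i,i+1\}$. Since $\inte(C)$ is a connected subset of $\D$ that joins $r_i$ to $r_{i+1}$ while avoiding every $a_m$, planar topology (exactly as used in Lemma \ref{eop}) forces $a_i$ and each $a_j$, $j\neq i$, to lie in distinct connected components of $\D\setminus\inte(C)$.

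Finally I would lift this separation from the small sets to the whole attractors. Because $a_j\subset \inte(A_j)$ and $\inte(A_j)$ is connected and disjoint from $\inte(C)$, the set $\inte(A_j)$ is entirely contained in the component of $\D\setminus\inte(C)$ that contains $a_j$; hence $\inte(A_i)$ and $\inte(A_j)$ sit in different components for every $j\neq i$. Each connected component of $\D\setminus \inte(C)$ is closed in $\D$, so taking closures yields that $A_i$ and $A_j$ themselves lie in different connected components of $\D\setminus \inte(C)$, which is the desired separation.

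The only non-routine point is the planar-topology step that turns the cyclic-order statement on $\partial\D$ into an actual separation inside $\D$; but this is precisely the argument already invoked in the proof of Lemma \ref{eop}, transported verbatim via the hyperbolic order property, so no new idea is needed.
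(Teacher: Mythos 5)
Your proof is correct and follows exactly the same route as the paper, which itself disposes of this lemma with the one-line remark that it is ``analogous to that of Lemma \ref{eop}, substituting \ldots the elliptic order property by the hyperbolic order property.'' You have simply unpacked that analogy: the reduction to the small sets $r_m, a_m$ via $C\cap A_m=\emptyset$ in $B$ implying $\inte(C)\cap\inte(A_m)=\emptyset$, the appeal to the cyclic order of section \ref{co} with the hyperbolic order property placing $a_i$ alone between $r_i$ and $r_{i+1}$, and the lift back to full attractors using connectedness of $\inte(A_j)$ and closedness of the components of $\D\setminus\inte(C)$ are precisely the steps in the paper's proof of Lemma \ref{eop}, transported verbatim.
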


\begin{lema}\label{upsi} Let $((R_i)_{i\in \Z/n\Z} , (A_i)_{i\in \Z/n\Z})$ be a  hyperbolic configuration.  If 
$X\in {\cal E}$, then there is only one connected component of $B\backslash X$ containing sets in  
${\cal E}$.
\end{lema}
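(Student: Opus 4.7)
The plan is to split into the two cases $X=R_j$ and $X=A_j$. In each case I use the defining bricks $b_k^k, b_k^{k-1}\in R_k$ of the hyperbolic configuration to join $R_k$ with $A_k$ and with $A_{k-1}$ by a connected subset of $B\setminus X$, and then chain these elementary joinings around the cyclic order. The main subtlety will be the asymmetry between removing a repeller and removing an attractor: futures of the hyperbolic witnesses automatically avoid a removed repeller, but not a removed attractor, so the attractor case requires a dual construction using pasts.

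Case $X=R_j$. Fix $k\neq j$. The bricks $b_k^k,b_k^{k-1}\in R_k$ satisfy $[b_k^k]_>\cap A_k\neq\emptyset$ and $[b_k^{k-1}]_>\cap A_{k-1}\neq\emptyset$. Since $R_k\cap R_j=\emptyset$, these bricks lie outside the repeller $R_j$, so Remark~\ref{br}, item~\ref{br2} gives $[b_k^k]_\geq\cap R_j=[b_k^{k-1}]_\geq\cap R_j=\emptyset$. Futures are connected by Proposition~\ref{futcon}, and $R_k,A_k,A_{k-1}$ are all disjoint from $R_j$, so $R_k\cup[b_k^k]_\geq\cup A_k$ and $R_k\cup[b_k^{k-1}]_\geq\cup A_{k-1}$ are connected subsets of $B\setminus R_j$. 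Hence, for every $k\neq j$, $R_k$, $A_k$ and $A_{k-1}$ all belong to a single connected component of $B\setminus R_j$.

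Case $X=A_j$. For $k\neq j$, pick any brick $b'\in[b_k^k]_>\cap A_k$. Then $b'\in A_k$, which is disjoint from $A_j$, so $b'\notin A_j$; Remark~\ref{br}, item~\ref{br2} then gives $[b']_\leq\cap A_j=\emptyset$. By Proposition~\ref{futcon}, $[b']_\leq$ is a connected repeller containing $b_k^k\in R_k$ and $b'\in A_k$, so $R_k\cup[b']_\leq\cup A_k$ is a connected subset of $B\setminus A_j$, and hence $R_k$ and $A_k$ lie in the same component of $B\setminus A_j$. The same argument applied to $b_k^{k-1}$ and any brick in $[b_k^{k-1}]_>\cap A_{k-1}$ yields the analogous relation between $R_k$ and $A_{k-1}$, whenever $k-1\neq j$, i.e.\ $k\neq j+1$.

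Chain argument. Writing $\sim$ for ``belongs to the same connected component of $B\setminus X$'', the edges collected above form a bipartite graph on $\mathcal{E}\setminus\{X\}$ in which all relations $R_k\sim A_k$ and $R_k\sim A_{k-1}$ hold except those incident to $X$. For $X=R_j$ this yields the cyclic chain $A_j\sim R_{j+1}\sim A_{j+1}\sim\cdots\sim R_{j-1}\sim A_{j-1}$; for $X=A_j$ it gives the path $R_{j+1}\sim A_{j+1}\sim R_{j+2}\sim\cdots\sim A_{j-1}\sim R_j$. In both cases every element of $\mathcal{E}\setminus\{X\}$ appears in a single connected component of $B\setminus X$, as desired.
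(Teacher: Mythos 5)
Your proof is correct and follows essentially the same route as the paper: use the hyperbolic witness bricks $b_k^k,b_k^{k-1}\in R_k$ together with the connectedness of futures/pasts (Proposition~\ref{futcon}) and Remark~\ref{br} to chain consecutive sets of ${\cal E}$ around the cycle inside a single component of $B\setminus X$. The only difference in presentation is that the paper treats $X=R_j$, adding sets to the component $C$ one at a time by induction and relying on $B\setminus R_j$ being an attractor, while asserting the case $X=A_j$ is ``analogous''; you instead build the pairwise adjacency relations explicitly via $R_k\cup[b_k^k]_\geq\cup A_k$ (resp.\ $R_k\cup[b']_\leq\cup A_k$), and you correctly identify that the $X=A_j$ case needs pasts rather than futures — a genuine asymmetry the paper glosses over. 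This is a more explicit write-up of the same argument; both use item~\ref{br2} of Remark~\ref{br}, connectedness of $[b]_\geq$ and $[b]_\leq$, and the pairwise disjointness of the sets in ${\cal E}$, and neither needs the hyperbolic order property.
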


\begin{proof}  We will suppose that  $X= R_j, j\in \Z/n\Z$; the proof is analogous for any $X\in {\cal E}$.
 We will show that the connected component $C$ of $B\backslash R_j$ containing $A_j$ contains every $X\in {\cal E}, 
X\neq R_j$. As $B\backslash R_j$ is an attractor, and there is
a brick in $R_{j+1}$ whose (connected) future intersects
$A_j$, we have that $R_{j+1}\subset C$ (we recall that every connected component of an attractor is an attractor, 
see Proposition \ref{futcon}). As there is also a brick in $R_{j+1}$ whose future intersects
$A_{j+1}$, the same argument shows that $A_{j+1}\in C$. By induction, we get that every $X\in {\cal E}\backslash \{R_j\}$
belongs to $C$.
\end{proof}

\begin{lema}\label{bconh}   Let $((R_i)_{i\in \Z/n\Z} , (A_i)_{i\in \Z/n\Z})$ be a  maximal hyperbolic configuration. One
of the following is true:

\begin{enumerate}
 \item $\fix(f)\neq \emptyset$,
\item there exists a connexion brick from $R_j$ to $A_j$ for some $j\in \Z/n\Z$.
\end{enumerate}
 
\end{lema}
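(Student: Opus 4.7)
The plan is to apply Lemma \ref{ady} to reduce to a very tight situation, and then to analyse topological chains of bricks inside the futures $[b_j^j]_>$ guaranteed by the hyperbolic configuration. If $R_j$ fails to be adjacent (in $\D$) to $A_j$ for some $j$, then Lemma \ref{ady}(\ref{notadj}) immediately produces the desired connexion brick and we are done. So I may assume $R_j$ is adjacent to $A_j$ for every $j$. Set $Y := B\setminus\bigcup_i(R_i\cup A_i)$. By Lemma \ref{ady}(\ref{adj2}) I may further assume, arguing by contradiction, that no brick of $Y$ is simultaneously adjacent to $R_j$ and $A_j$ for any $j$; under this hypothesis I will derive $\fix(f)\neq\emptyset$.

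For each $j$ the hyperbolic configuration supplies $b_j^j\in R_j$ with $[b_j^j]_>\cap A_j\neq\emptyset$, and by Proposition \ref{futcon} the set $[b_j^j]_\geq$ is connected. I would therefore pick a topological chain of pairwise adjacent bricks $b_j^j=c_0,c_1,\dots,c_L$ in $[b_j^j]_\geq$ with $c_L\in A_j$ and $L$ minimal. A key preliminary fact is that $[b_j^j]_>$ meets no repeller $R_i$ with $i\neq j$: any witness point $x\in b_j^j$ with $f^k(x)$ lying in a common brick of $R_i$ satisfies $x\in f^{-k}(R_i)\subset R_i$ since $R_i$ is a repeller, which contradicts $x\in R_j$ and the disjointness of $R_i,R_j$. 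Combining this with Remark \ref{br}(\ref{br4})---two disjoint attractors cannot be adjacent---tightly constrains where the chain can go.

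Let $c_m$ be the first brick of the chain leaving $R_j$. Then $c_m$ is adjacent to $R_j$ and, by the preceding paragraph, either $c_m\in Y$ or $c_m\in A_i$ for some $i$. If $c_m\in Y$, I would walk along a path inside $[c_m]_>\subset[b_j^j]_>$ from $c_m$ up to a brick of $A_j$ (such a path exists by Proposition \ref{futcon}); the same constraints force the penultimate brick to lie in $Y$, exhibiting $c_m$ as a connexion brick from $R_j$ to $A_j$ and contradicting our standing hypothesis. The main obstacle is therefore the remaining case $c_m\in A_i$, in which the chain jumps directly from $R_j$ into an attractor; topological adjacency in $\D$ forces cyclic adjacency in the hyperbolic order, so necessarily $i\in\{j-1,j\}$. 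Applying this dichotomy to every $j\in\Z/n\Z$ simultaneously and using Lemmas \ref{hop} and \ref{upsi}, I expect the resulting rigid pattern of direct adjacencies to propagate cyclically around the configuration and eventually produce a closed chain of bricks, that is, a brick $b$ with $b\in[b]_>$. Proposition \ref{franksfino} then forces $f$ to be recurrent, contradicting the standing non-recurrence hypothesis of this section and completing the proof.
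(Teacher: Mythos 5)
The paper's proof and yours diverge already at the third sentence, and unfortunately yours does not close. After reducing (correctly, via Lemma~\ref{ady}) to the situation where every $R_i$ is adjacent to $A_i$ and no brick of $Y$ is adjacent to both, the paper does a purely \emph{local} topological analysis of the adjacency $R_i\cap A_i$: since under $\fix(f)=\emptyset$ every connected component of $\partial X$ ($X\in\mathcal E$) is an embedded line, the adjacency is either through a shared boundary line (which, by Lemma~\ref{upsi} applied to both $R_i$ and $A_i$, would force \emph{all} other sets of $\mathcal E$ on both sides of that line simultaneously --- impossible), or through a shared vertex of $\Sigma(\mathcal D)$, and the third brick at that vertex lies in $Y$ by Remark~\ref{br}(\ref{br4}) and is adjacent to both, contradicting the standing assumption via Lemma~\ref{ady}(\ref{adj2}). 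Nothing needs to propagate around the cycle.

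Your chain-walking strategy leaves several steps unjustified. First, in the case $c_m\in Y$, you need $[c_m]_>\cap A_j\neq\emptyset$ to even start walking toward $A_j$; knowing $c_m\in[b_j^j]_>$ and $[b_j^j]_\geq\cap A_j\neq\emptyset$ does not give this, since the future of $c_m$ may miss $A_j$ entirely (there is no reason for $c_L$ to lie in $[c_m]_>$ just because they are both in $[b_j^j]_\geq$; your chain is one of \emph{topological} adjacency, not of iterated $\varphi$). Second, the assertion ``topological adjacency in $\D$ forces cyclic adjacency, so $i\in\{j-1,j\}$'' is precisely the kind of statement that needs an argument here (it is not a formal consequence of the definition of a hyperbolic configuration, and indeed the cyclic order is on the small sets $r_i,a_i$, not on $R_i,A_i$). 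Third, the final paragraph is explicitly conjectural (``I expect the resulting rigid pattern\ldots to propagate cyclically\ldots and eventually produce a closed chain''); no mechanism is given for producing a brick $b$ with $b\in[b]_>$. As written, the proof cannot be completed; the missing ingredient is exactly the paper's observation that under $\fix(f)=\emptyset$ the boundaries of attractors and repellers are embedded lines, which turns the adjacency of $R_i$ and $A_i$ into a dichotomy (shared line versus shared vertex) both branches of which lead to a contradiction.
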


\begin{proof} We will show that if $\fix(f) = \emptyset$, then there exists a connexion brick from $R_j$ to $A_j$ 
for some $j\in \Z/n\Z$. By Lemma \ref{ady}, we
can suppose that $R_i$ is adjacent to $A_i$ for all $i\in \Z/n\Z$. If  $R_i$ is adjacent to $A_i$, either there is one 
connected component $\gamma$ of $\partial R_i$ which is also a connected component of $\partial A_i$ or there is a point 
$x\in R_i\cap A_i\cap \partial (R_i\cup A_i)$. If $\fix (f) = \emptyset$, then every connected component of
$\partial X$ is an embedded line in $\D$, for any $X\in {\cal E}$.  So, if there were one 
connected component $\gamma$ of $\partial R_i$ which is also a connected component of $\partial A_i$, $\gamma$ would 
separate $\D$ into two connected components $C_1$ and $C_2$, containing $\inte (A_i)$ and $\inte (R_i)$ respectively.  Then,
Lemma \ref{upsi} would imply that every set in  ${\cal E}\backslash R_i$ belongs to $C_1$, and that every set in  ${\cal E}
\backslash A_i$ belongs to $C_2$, which is clearly impossible.  

We are left with the case where there is a point $x\in R_i\cap A_i\cap \partial (R_i\cup A_i)$. This point $x$ is necessarily
a vertex of $\Sigma ({\cal D})$.  It belongs to three bricks: one that belongs to $R_i$, another one which belongs to $A_i$,
and a third one which is adjacent to both $R_i$ and $A_i$. This third brick brick does not belong to any repeller or 
attractor, as it is adjacent to both $R_i$ and $A_i$ (see Remark \ref{br}, item \ref{br4}). So,  by Lemma \ref{ady}, item 
\ref{adj2}, there exists a connexion brick from $R_i$ to $A_i$.
 
\end{proof}

We will prove  Proposition \ref{2c} by induction on the order of the configuration. We begin by the case $n = 2$:

\begin{prop} If there exists a hyperbolic configuration of order $2$, then $\fix (f)\neq\emptyset$.
 
\end{prop}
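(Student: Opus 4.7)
The plan is to proceed by contradiction. Assuming $\fix(f)=\emptyset$, Zorn's lemma gives a maximal hyperbolic configuration $((R_0,R_1),(A_0,A_1))$, and Lemma \ref{bconh} then produces a connexion brick $b$ from $R_j$ to $A_j$ for some $j\in\Z/2\Z$. Without loss of generality take $j=0$, and let $b'\in[b]_>$ be the corresponding brick adjacent to $A_0$, with $b,b'\notin\cup_i(R_i\cup A_i)$.

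The idea is to show that $A_0$ can be strictly enlarged, contradicting maximality. Since $b$ is adjacent to $R_0$ and lies outside the configuration, Lemma \ref{max} yields $[b]_<\cap R_j\neq\emptyset$ in $B$ for some $j\neq 0$; as $n=2$, this forces $[b]_<\cap R_1\neq\emptyset$. Set
$$R'=R_0\cup [b]_\leq\cup R_1.$$
This is a connected repeller: the three pieces are repellers, and connectedness follows because $b\in[b]_\leq$ is adjacent to $R_0$ while $[b]_\leq$ shares a brick with $R_1$. Moreover, since $b$ lies in no attractor, Remark \ref{br}, item \ref{br2}, gives $[b]_\leq\cap A_m=\emptyset$ in $B$ for each $m\in\Z/2\Z$, so $R'$ is disjoint in $B$ from $A_0$ and $A_1$. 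Lemma \ref{hop} then implies that $\inte(R')$ separates $A_0$ from $A_1$ in $\D$.

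Define $A_i'$ to be the connected component of $B\setminus R'$ containing $A_i$, for $i\in\Z/2\Z$. Each $A_i'$ is an attractor by Proposition \ref{futcon}, and $A_0'\cap A_1'=\emptyset$ in $B$ since their interiors lie in distinct connected components of $\D\setminus\inte(R')$. The key step is to show $b'\in A_0'\setminus A_0$. By the definition of connexion brick, $b'\notin R_0\cup R_1$; moreover the non-recurrence of $f$ gives $[b]_>\cap[b]_\leq=\emptyset$ (one of the equivalent reformulations of recurrence recalled in Section \ref{bricks}), so $b'\in[b]_>$ forces $b'\notin[b]_\leq$. Hence $b'\in B\setminus R'$, and since $b'$ is adjacent to $A_0$ with both $b'$ and $A_0$ inside $B\setminus R'$, they lie in the same connected component, so $b'\in A_0'$ while $b'\notin A_0$.

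It remains to verify that $((R_0,R_1),(A_0',A_1'))$ is still a hyperbolic configuration. One keeps the same witnesses $r_i\subset\inte(R_i)$ and $a_i\subset\inte(A_i)\subset\inte(A_i')$ (the last inclusion since $A_i\subset A_i'$), which preserve the hypothesis that $\D\setminus\cup_i(a_i\cup r_i)$ is connected as well as the hyperbolic order property. The original connection bricks $b_i^i,b_i^{i-1}\in R_i$ still satisfy $[b_i^i]_>\cap A_i'\neq\emptyset$ and $[b_i^{i-1}]_>\cap A_{i-1}'\neq\emptyset$ because $A_j\subset A_j'$. Since $b'\in A_0'\setminus A_0$, the new configuration strictly dominates the old one, contradicting maximality. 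The main subtlety, though modest, is ruling out $b'\in [b]_\leq$, which is precisely where the standing non-recurrence assumption is indispensable.
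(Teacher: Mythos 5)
Your proof is correct and follows essentially the same route as the paper: invoke Lemma \ref{bconh} to get a connexion brick $b$ from $R_0$ to $A_0$, use Lemma \ref{max} to show $[b]_<$ meets $R_1$, form the enlarged connected repeller $R'=R_0\cup[b]_\leq\cup R_1$, apply Lemma \ref{hop} to separate $A_0$ from $A_1$, and enlarge $A_0$ by the component of $B\setminus R'$ containing it, using non-recurrence to place $b'$ outside $R'$ and thereby contradict maximality. You spell out somewhat more explicitly than the paper that the enlarged pair is still a hyperbolic configuration, but the argument is the same.
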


\begin{proof}  Suppose there exists such a  configuration  and take a maximal one $$((R_i)_{i\in \Z/2\Z} , 
(A_i)_{i\in \Z/2\Z}).$$
Because of Lemma \ref{bconh},   we can suppose that there exists a connexion brick $b$ from $R_j$ to
$A_j$ for some $j\in \Z/2\Z$, and there is no loss of generality in supposing $j=0$.  We take a brick $b'$ 
such that $b' \in [b]_>$, $b'\in B \backslash \cup_{i\in \Z/n\Z} (R_i\cup A_i)$ and
 $b'$ is adjacent to $A_0$. Here again, we will first show that $[b]_<$, the strict past of $b$, meets every repeller
and no attractor in the configuration. Then, by
defining $A'_i$ as the connected component of $B\backslash (\cup_{i\in \Z/2\Z}R_i\cup [b]_<)$ containing $A_i$, we will
be able to show that $((R_i)_{i\in \Z/2\Z} , (A'_i)_{i\in \Z/2\Z})$ is a hyperbolic configuration strictly greater than the 
original one, due to the fact that $b'\in A'_0\backslash A_0$. 

Because of Lemma \ref{max} we know that $[b]_<\cap R_1\neq \emptyset$ in $B$.
So,  $$R = R_0\cup b_{{\leq}}\cup R_1$$
\noindent is connected and disjoint from every attractor in the configuration (see Remark \ref{br}, item \ref{br2}). 
It follows that $\inte (R)$ separates  $A_0$ from $A_1$, this being the content of Lemma \ref{hop}. Let $A'_i$ be the
connected component of $B\backslash R$ containing $A_i$, $i\in \Z/2\Z$.  Then, $A'_0\cap A'_1 = \emptyset$. We know
that $b'\notin R$, because $b'\in [b]_>$, and otherwise $f$ would be recurrent. So, 
$b'$ belongs to $A'_0\backslash A_0$, contradicting the maximality of $((R_i)_{i\in \Z/2\Z} , (A_i)_{i\in \Z/2\Z})$.

\end{proof}

Now we are ready to prove  Proposition \ref{2c}:

\begin{proof}
We will show that given a maximal hyperbolic configuration of order $n>2$ $$((R_i)_{i\in \Z/n\Z} , (A_i)_{i\in \Z/n\Z}),$$ 
\noindent we can
 construct a new hyperbolic
 configuration whose order is strictly smaller than $n$ (and yet greater or equal to $2$). We can suppose there exists a 
connexion brick $b$ from $R_0$ to $A_0$.  We take a brick $b'\in [b]_>$ such that $b'\in B 
\backslash \cup_{i\in \Z/n\Z} (R_i\cup A_i)$ and $b'$ is adjacent to $A_0$.  By Lemma \ref{max}, 
$$[b]_\leq\cap R_i\neq \emptyset \textrm{ for some }\ i \neq 0. $$ We can suppose that 
$i\neq 1$; otherwise, we could use the same argument we used for the case $n=2$. Indeed, Lemma \ref{hop}
would imply that $R_0\cup R_1\cup [b]_\leq$ is a connected repeller which separates $A_0$ from any other
$A_j$, $j\neq 0$. So, by replacing $A_0$ by $A'_0$, the connected component of 
$B\backslash (R_0\cup R_1\cup [b]_\leq)$ containing $A_0$, we would have a hyperbolic
configuration strictly bigger than the original one.

So, we may suppose that $$i = \min \{j\in \{1, \ldots, n-1\}: [b]_\leq\cap R_j\neq \emptyset \}\neq 1 .$$ 
\noindent We define $$R = R_0 \cup  [b]_\leq\cup R_i,$$ \noindent which is a connected repeller. 

If we set $R'_0 = R$, $R'_j = R_j$ for all $1\leq j\leq i-1$, and $A'_j=A_j$ for all $i\in \Z/n\Z$, $0\leq j \leq i-1$.  
Then, $((R'_j)_{j\in \Z/i\Z}, (A'_j)_{j\in \Z/i\Z})$ is a hyperbolic configuration of order $i$, $2\leq i <n$.

\end{proof}

\subsection{Proof of the Theorem}

In this section we prove Theorem \ref{main}.
We fix an orientation preserving homeomorphism $f :\D\to \D$ which realizes a compact convex polygon $P\subset \D$, and
can be extended to a homeomorphism of $\D\cup (\cup_{i\in\Z/n\Z}\{\alpha_i, \om_i\})$.  We suppose that $i(P)\neq 0$, and we will show  that either $f$
is recurrent, or we can construct an elliptic  or hyperbolic Repeller/Attractor configuration.\\

Some polygons can be simplified, due to the fact that they may have  ``extra'' edges. More precisely,  
we will say that the polygon $P$ is minimal if for every $i\in \Z/n\Z$, the lines $\{\Delta _j: j\neq i\}$
do not bound a compact convex polygon.  The following lemma tells us that it is enough to deal with minimal polygons.

\begin{lema}\label{min} The map $f$ realizes a minimal polygon $P'$ such that $i(P') = i(P)$, or a triangle $T$ such that $i(T)=1$.

\end{lema}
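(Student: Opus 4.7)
The plan is a reductive induction on the number $n$ of edges of $P$. The base case $n = 3$ is immediate: any triangle is minimal, since two lines cannot bound a compact convex set, so $P' = P$ satisfies the conclusion. For the inductive step, assume $P$ has $n \geq 4$ edges and is non-minimal, so there exists $i_0 \in \Z/n\Z$ such that $\{\Delta_j : j \neq i_0\}$ bounds a compact convex polygon $\widetilde P$. Since the lines $\Delta_j$ with $j \neq i_0$ are unchanged, their boundary endpoints $\alpha_j, \omega_j$ coincide with those of $P$ and remain pairwise distinct, and the subfamily $(z_j)_{j \neq i_0}$ (restricted to those $j$ whose line actually supports an edge of $\widetilde P$) realizes $\widetilde P$.

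The key is the behavior of the index under this reduction. In $\widetilde P$ the vertices $v_{i_0}, v_{i_0+1}$ merge into a single new vertex $v'$ where $\Delta_{i_0-1}$ meets $\Delta_{i_0+1}$. Because $P \subseteq \widetilde P$, the polygon lies on the same side of each $\Delta_j$, so $\delta$'s at all surviving vertices are unchanged, and the new $\delta'$ at $v'$ has the same parity as $\delta_{i_0} + \delta_{i_0+1}$. A direct computation yields
\[
i(\widetilde P) - i(P) = \tfrac{1}{2}\bigl(\delta_{i_0} + \delta_{i_0+1} - \delta'\bigr) \in \{0,\, 1\},
\]
the value being $1$ precisely when $\delta_{i_0} = \delta_{i_0+1} = 1$, and $0$ otherwise. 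Thus every admissible reduction is either \emph{good} (index-preserving) or \emph{bad} (index-raising by one).

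If some admissible reduction is good, apply the inductive hypothesis to $\widetilde P$ (whose index equals $i(P) \neq 0$) and we are done. Otherwise every admissible reduction is bad. Performing such a reduction strictly raises the index, and since the index cannot exceed $1$, iterating produces after finitely many steps a polygon $Q$ realized by $f$ with $i(Q) = 1$. At index $1$ all $\delta^Q_j$ vanish, i.e., $Q$ lies on the same side of each of its oriented supporting lines. Since $Q$ is a bounded convex polygon in $\D$, its outward-normal directions are not contained in any open half-plane, so one may choose three of its lines $\Delta_{j_1}, \Delta_{j_2}, \Delta_{j_3}$ whose corresponding inner half-planes intersect in a compact convex triangle $T \supseteq Q$. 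All three edges of $T$ then have $T$ on the same side, giving $i(T) = 1$, and the three orbit points $z_{j_1}, z_{j_2}, z_{j_3}$ realize $T$. The main obstacle is to prevent the bad-reduction cascade from terminating at a minimal polygon of intermediate index distinct from both $i(P)$ and $1$; this requires verifying that as long as the index has not yet reached $1$, each bad reduction leaves a polygon that still admits a further reduction, which amounts to a careful combinatorial analysis of how the pattern of $\delta$'s and the redundancy of the supporting lines interact under successive removals.
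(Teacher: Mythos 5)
Your index formula $i(\widetilde P) - i(P) = \tfrac{1}{2}(\delta_{i_0} + \delta_{i_0+1} - \delta')$ is correct, and you correctly identify that the value lies in $\{0,1\}$. What you miss, and what dissolves both issues in the second half of your argument, is that this quantity is itself the index of a triangle realized by $f$. Namely, the three lines $\Delta_{i_0-1}$, $\Delta_{i_0}$, $\Delta_{i_0+1}$ bound a compact triangle $T \subset \D$ whose third vertex is $v' = \Delta_{i_0-1}\cap\Delta_{i_0+1}$; this $T$ lies on the \emph{opposite} side of $\Delta_{i_0}$ from $P$, and a short computation (the $\delta$'s at $v_{i_0}$ and $v_{i_0+1}$ flip, the one at $v'$ agrees with $\delta'$) gives $i(T) = \tfrac{1}{2}(\delta_{i_0}+\delta_{i_0+1}-\delta')$, so that $i(\widetilde P) = i(P) + i(T)$ with $i(T)\in\{0,1\}$. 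Since $T$ is bounded by three of the $\Delta_j$'s, the map $f$ realizes $T$ via the orbits of $z_{i_0-1}, z_{i_0}, z_{i_0+1}$, whose $\alpha$'s and $\omega$'s are among those of $P$ and hence still distinct. Thus in your ``bad'' case one simply outputs $T$ and stops; in the ``good'' case one passes to $\widetilde P$, which has the same index, and iterates. The edge count strictly decreases, so termination is automatic, and there is no cascade of index-raising steps whose stopping behavior would need to be controlled --- the gap you flag never arises. Note also that your fallback step at index $1$ is not sound as stated: a square has index $1$, its outward normals are not contained in any open half-plane, yet no three of its four supporting lines bound a compact region, because each choice of three includes a pair of parallel lines.
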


\begin{figure}[h]
\begin{center}

{\includegraphics[scale=0.6]{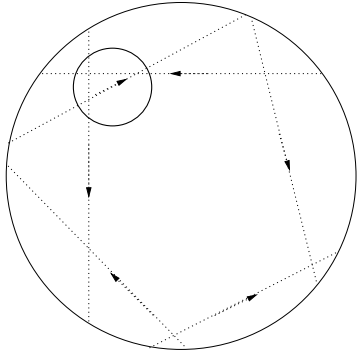}}
\caption{A non-minimal hexagon of index $-2$ presenting an index $1$ subtriangle.}
\end{center}
\end{figure}

\begin{proof}  If $P$ is not minimal, then there exists $i\in \Z/n\Z$ such that the straight lines 
$\{\Delta _j: j\neq i\}$ bound a compact polygon $P' \subset \D$.  The line $\Delta _i$
 intersects in $\D$ both $\Delta _{i-1}$ and $\Delta _{i+1}$; it follows that necessarily $$\Delta _{i-1}\cap 
\Delta _{i+1}\cap \D \neq \emptyset.$$ 

So, the lines  $\Delta _{i-1}$, $\Delta _i$ and $\Delta _{i+1}$ bound a triangle $T\subset \D$.  Moreover, 
$$i(P') = i(P) + i(T), $$ \noindent and the only possibilities for the index of a triangle are $0$ or $1$. 

If $i(T) = 1$, we are done.  Otherwise, $i(P') = i(P)$.  If $P'$ is minimal, we are done. If not, we apply the same procedure 
as before.  We continue like this until we obtain an index $1$ triangle, or a minimal polygon with the same index as $P$.
\end {proof}

Let us state our first proposition:

\begin{prop}\label{ell} If $i(P)=1$, then $f$ is recurrent.
 
\end{prop}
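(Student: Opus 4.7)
The plan is to argue by contradiction: assume $f$ is non-recurrent and construct an elliptic Repeller/Attractor configuration of order $n \geq 3$, then invoke Proposition \ref{hc} for the contradiction. First, by Lemma \ref{min} I may replace $P$ by either an index-$1$ triangle or a minimal polygon of index $1$; in either case $i(P) = 1$ forces every $\delta_i = 0$, so the convex polygon is coherently oriented and the $2n$ points $\{\alpha_i, \omega_i\}$ appear on $\partial \D$, in positive cyclic order, as
$$\ldots, \omega_{i-1}, \alpha_{i+1}, \omega_i, \alpha_{i+2}, \omega_{i+1}, \ldots$$
that is, Handel's order property (*). Setting $a_i \leftrightarrow \omega_i$ and $r_i \leftrightarrow \alpha_i$ converts this into precisely the elliptic order property $a_0 \to r_2 \to a_1 \to \cdots \to a_{n-1} \to r_1 \to a_0$.

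Using Corollary \ref{bon} together with Zorn's lemma, I fix a maximal free brick subdecomposition ${\cal D} = (V,E,B)$ of $\D \setminus \fix(f)$ containing bricks $b_i^l$ with $b_i'^l \subset b_i^l$; Proposition \ref{futcon} then ensures all futures and pasts are connected. For each $i \in \Z/n\Z$ I set the repeller $R_i := [b_i^{-1}]_\leq$ and the attractor $A_i := [b_i^1]_\geq$, and take $r_i, a_i$ to be suitable terminal pieces of the arcs $\Gamma_i^\pm$ from Remark \ref{puntas}: closed connected subsets of $\D$ inside $\inte(R_i), \inte(A_i)$, with closures in $\overline{\D}$ reaching $\alpha_i, \omega_i$. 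The second elliptic axiom is immediate with the choice $b_i := b_i^{-1}$, since the forward orbit of $z_i$ yields a chain $b_i^{-1} \to b_i^1$ in ${\cal D}$, so $b_i^1 \in [b_i^{-1}]_>$ and therefore $A_i \subset [b_i^{-1}]_\geq$, which makes the intersection with $A_i$ nonempty.

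The main obstacle is verifying the remaining axioms: pairwise disjointness of the $R_i$'s (resp.\ $A_i$'s) in $B$, and connectedness of $\D \setminus \bigcup_i (r_i \cup a_i)$. Disjointness $R_i \cap A_j = \emptyset$ follows from non-recurrence: any common brick, concatenated with the chain $b_i^{-1} \to b_i^1$ or $b_j^{-1} \to b_j^1$, would close into a cycle forbidden by Proposition \ref{franksfino}. To separate the $R_i$'s I replace each by the connected component of $\bigcup_k [b_k^{-1}]_\leq$ containing $b_i^{-1}$, which is still a repeller by Proposition \ref{futcon}; the $n$ resulting components are distinct because Lemma \ref{ends} allows the $r_i$ to be confined to arbitrarily small, pairwise disjoint neighborhoods $U_i^-$ of the cyclically separated points $\alpha_i$. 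The same treatment handles the $A_i$. With the $r_i, a_i$ so localized, they form $2n$ pairwise disjoint arcs each meeting $\partial \D$ (in closure) at a single point, so they cannot disconnect $\D$, and the cyclic order induced on these prime ends is the elliptic one established above. Thus $((R_i), (A_i))$ is an elliptic configuration of order $n \geq 3$, and Proposition \ref{hc} forces $f$ to be recurrent, contradicting the assumption.
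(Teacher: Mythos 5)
Your overall plan---assume non-recurrence, build a maximal free brick decomposition refining the disks from Lemma \ref{ends}, exhibit an elliptic Repeller/Attractor configuration, and invoke Proposition \ref{hc}---is exactly the paper's strategy, and your reduction to a minimal positively oriented polygon and the translation $\alpha_i \leftrightarrow r_i$, $\omega_i \leftrightarrow a_i$ is also correct. But the proof as written has a genuine gap in the disjointness step, and the gap is precisely what the paper's ``domino effect'' lemma (Lemma~\ref{uno}) is there to fill.

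You take $R_i = [b_i^{-1}]_\leq$ and $A_i = [b_i^1]_\geq$ and claim $R_i \cap A_j = \emptyset$ for all $i,j$ by non-recurrence, ``since a common brick, concatenated with $b_i^{-1} \to b_i^1$ or $b_j^{-1}\to b_j^1$, would close into a cycle.'' This works only when $i=j$: a common brick of $[b_i^{-1}]_\leq$ and $[b_j^1]_\geq$ gives a chain $b_j^1 \to \cdots \to b_i^{-1}$, and the orbit of $z_i$ gives $b_i^{-1}\to b_i^1$, but there is no a priori chain from $b_i^1$ back to $b_j^1$ to close the loop when $i\neq j$. The paper's Lemma~\ref{uno} supplies exactly this missing link: it chooses $k$ large enough that $[b_i^{-k}]_>$ contains $b_j^k$ for \emph{every} pair $i,j$, whence $[b_j^k]_> \subset [b_i^{-k}]_>$ and non-recurrence (applied to the single brick $b_i^{-k}$) kills $[b_i^{-k}]_< \cap [b_j^k]_>$ for all pairs. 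The same lemma drives the mutual disjointness of the $[b_i^{-k}]_<$ among themselves (and of the $[b_i^k]_>$), via a short elliptic-order argument. Your attempted substitute---replace $R_i$ by the connected component of $\bigcup_j [b_j^{-1}]_\leq$ containing $b_i^{-1}$, and argue distinctness from the fact that the $r_i$ can be localized near the $\alpha_i$---does not work: localizing $r_i$ in a small neighborhood $U_i^-$ controls only the ``tail'' of the repeller near $\alpha_i$, not the global spread of the past $[b_i^{-1}]_\leq$, and nothing prevents two of these pasts from sharing a brick far from the boundary, in which case the two ``components'' coincide. You need the quantitative domino-effect statement (or an equivalent), and without it the elliptic configuration has not been constructed.

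Two smaller remarks. First, the paper works with strict pasts/futures $[b_i^{-k}]_<$, $[b_i^k]_>$, not $[b_i^{-1}]_\leq$, $[b_i^1]_\geq$; with your choice you would additionally have to worry about $b_i^{-1}$ or $b_i^1$ itself sitting in another configuration set. Second, the assertion that the localized arcs $r_i, a_i$ ``cannot disconnect $\D$'' is in fact the content the paper packages into condition (2) of the Repeller/Attractor definition together with the prime-end discussion of \S\ref{co}, and it needs to be checked against the actual $r_i \subset \inte(R_i)$, $a_i \subset \inte(A_i)$---which the paper takes as $\Gamma_i^\mp \cap \bigcup_{m\geq k} b_i^{\mp k}$---rather than against arbitrary localized arcs.
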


We observe that  lemma \ref{min} allows us to suppose that $P$ is minimal; we will also suppose that the boundary of $P$ is 
positively oriented. With these assumptions,  the order of the points $\alpha _i, \om _i, i\in \Z/n\Z $ at the circle at 
infinity satisfies:

$\om_0\to \alpha_2\to \om_1\to \ldots\to \om_i\to \alpha_{i+2}\to \om_{i+1}\to \ldots\to \om_{n-1}\to \alpha_1\to \om_0.$

{\bf From now on, we suppose that $f$ is not recurrent.} We apply Lemma \ref{ends} and obtain a family of closed disks 
$(b_i'^l)_{l\in \Z\backslash \{0\}, i\in \Z/n\Z}$.
The hypothesis on the points $ \alpha _i, \om _i,\  i\in \Z/n\Z ,$ allows us to suppose that all the disks 
$(b_i'^l)_{l\in \Z\backslash \{0\}, i\in \Z/n\Z}$ have
pairwise disjoint interiors (see Remark \ref{ptsdf}). \\

\begin{obs}\label{airi} The sets $\Gamma _i^-\cap \D,\  \Gamma _i^+\cap \D$ defined in Remark \ref{puntas} satisfy 
the elliptic order property (see Remark \ref{order}).
\end{obs}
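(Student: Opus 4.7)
The plan is to show that the family $(\Gamma_i^-\cap\D,\ \Gamma_i^+\cap\D)_{i\in\Z/n\Z}$ falls into the framework of Subsection~\ref{co}, and that the cyclic order it produces on these sets coincides with the cyclic order of their boundary endpoints $\{\alpha_i,\om_i\}$ on $S^1$. Since this boundary order is, by our standing assumption on $P$, exactly the elliptic order property, identifying $r_i=\Gamma_i^-\cap\D$ and $a_i=\Gamma_i^+\cap\D$ will conclude the proof.

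I would first verify the hypotheses of Subsection~\ref{co}. Each $\Gamma_i^\pm\cap\D$ is non-empty, closed in $\D$, and connected, since $\Gamma_i^\pm$ is an injective continuous arc with one endpoint in $\D$ and the other at $\partial\D$. Its closure in $\overline\D$ meets $\partial\D$ precisely at $\om_i$ (resp.\ $\alpha_i$). Pairwise disjointness follows from Remark~\ref{ptsdf}: the disks $b_i'^l$ have pairwise disjoint interiors, and by Remark~\ref{puntas} each $\Gamma_i^+$ is contained in $\inte(\bigcup_{l\geq 1}b_i'^l)\cup\{\om_i\}$ and each $\Gamma_i^-$ in $\inte(\bigcup_{l\geq 1}b_i'^{-l})\cup\{\alpha_i\}$, while by hypothesis the $2n$ endpoints $\alpha_i,\om_i$ are all different. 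Finally, $U=\D\setminus\bigcup_{i\in\Z/n\Z}(\Gamma_i^-\cup\Gamma_i^+)$ is connected: these $2n$ pairwise disjoint proper half-open arcs behave topologically like disjoint rays approaching $\partial\D$, and removing finitely many disjoint such rays from $\D$ does not disconnect it.

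Having verified the hypotheses, Subsection~\ref{co} produces a cyclic order on the family through the Carath\'eodory extension $\hat\varphi:\hat U\to\overline\D$ of the Riemann map $\varphi:U\to\D$. The Hausdorff convergence $(b_i'^l)_{l\geq 1}\to\{\om_i\}$ (item~8 of Lemma~\ref{ends}) forces any prime end of $U$ whose impression meets $\Gamma_i^+$ to have impression contained in $\{\om_i\}$ or in an arbitrarily small neighbourhood of $\om_i$; the analogous statement holds for $\Gamma_i^-$ at $\alpha_i$. Consequently the open intervals $J_{\Gamma_i^{\pm}}\subset S^1$ produced by the construction of Subsection~\ref{co} accumulate exactly at $\om_i$ (resp.\ $\alpha_i$), and their cyclic order on $S^1$ coincides with the cyclic order of the points $\{\alpha_i,\om_i\}$ on $\partial\D$.

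Under the standing assumption that $P$ is minimal and positively oriented with $i(P)=1$, we stated just before Remark~\ref{puntas} that this boundary order is $\om_0\to\alpha_2\to\om_1\to\cdots\to\om_{n-1}\to\alpha_1\to\om_0$, which under the identification $r_i=\Gamma_i^-\cap\D$, $a_i=\Gamma_i^+\cap\D$ is precisely the elliptic order property $a_0\to r_2\to a_1\to\cdots\to a_{n-1}\to r_1\to a_0$. The main obstacle is the third paragraph above: making rigorous the correspondence between the prime-end cyclic order inside $U$ and the cyclic order of accumulation points on $\partial\D$; the key input is the Hausdorff convergence of the brick chains $b_i'^l$, which forces each $\Gamma_i^\pm$ to be ``thin'' near its boundary endpoint and to accumulate only there.
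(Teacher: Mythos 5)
The paper states this as an unproved observation, so there is no proof to compare against; I'll evaluate your argument on its own merits.

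Your overall strategy is the right one: cast the arcs $\Gamma_i^\pm\cap\D$ into the framework of Subsection~\ref{co} (the ``cyclic order at infinity''), verify its hypotheses, and then argue that the resulting cyclic order of the arcs agrees with the cyclic order of their boundary endpoints $\{\alpha_i,\om_i\}$ on $S^1$, which in turn is exactly the elliptic order property once one matches $r_i\leftrightarrow\alpha_i$ and $a_i\leftrightarrow\om_i$. Your second paragraph (verification of hypotheses) is correct and the final translation to the elliptic order property is correct.

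The third paragraph contains a genuine error, though. You write that the Hausdorff convergence ``forces any prime end of $U$ whose impression meets $\Gamma_i^+$ to have impression contained in $\{\om_i\}$ or in an arbitrarily small neighbourhood of $\om_i$.'' This is false: a prime end of $U$ supported at a point in the interior of the slit $\Gamma_i^+$ (far from $\om_i$) has impression there, not near $\om_i$. The set $\hat J_{\Gamma_i^+}$ of prime ends with impression inside $\Gamma_i^+\cap\D$ spans the entire slit, not merely its tip. Also the phrase ``the open intervals $J_{\Gamma_i^\pm}\subset S^1$ accumulate exactly at $\om_i$'' conflates the abstract Carath\'eodory circle with $\partial\D$; the $J$'s live on the former and cannot ``accumulate'' at a point of the latter.

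The clean way to close the gap is this. Each $\Gamma_i^\pm$ is a slit meeting $\partial\D$ at the single point $\om_i$ (resp.\ $\alpha_i$), and these $2n$ boundary points are all distinct by hypothesis. Between consecutive endpoints of $\{\alpha_i,\om_i\}$ lies an open arc $I\subset\partial\D$ disjoint from all slits; the prime ends accessible from such an arc $I$ form a nontrivial subset $K_I$ of the Carath\'eodory circle disjoint from every $\hat J_{\Gamma_j^\pm}$. These sets $K_I$ separate the intervals $\hat J_{\Gamma_j^\pm}$ from one another in the same cyclic pattern in which the arcs $I$ separate the endpoints $\{\alpha_i,\om_i\}$ on $\partial\D$. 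Hence the cyclic order of the $\hat J$'s (and therefore of the slits, in the sense of Subsection~\ref{co}) coincides with that of the endpoints, which is what you needed. The Hausdorff convergence from Lemma~\ref{ends} is indeed used, but only to ensure that each $\Gamma_i^\pm$ meets $\partial\D$ at exactly one point (so that the arcs $I$ exist and are nondegenerate), not to localise impressions.
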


By Corollary \ref{bon}, we can construct a free brick decomposition 
$(V,E,B)$ such that for all $i\in \Z/n\Z$ and for all $l\in \Z\backslash \{0\}$, there 
exists $b_i^l\in B$ such that $b_i'^l\subset b_i^l$.  Moreover, one can suppose that this decomposition is maximal.

\begin{obs}\label{cruz} As $\cup _{l>0} [b_i^l]_\leq$ is a connected set whose closure contains both $\alpha _i$ and $\om _i$,
 if $\Gamma : [0,1] \to \overline \D$ is an arc that separates $\alpha _i$ from $\om _i$, then $\Gamma \cap 
\cup _{l>0} [b_i^l]_\leq\neq \emptyset$. 
 
\end{obs}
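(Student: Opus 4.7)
\textbf{Proof plan for Remark \ref{cruz}.} My plan is to verify three points in turn: (i) $S := \cup_{l > 0} [b_i^l]_\leq$ is connected; (ii) its closure in $\overline{\D}$ contains both $\alpha_i$ and $\om_i$; (iii) any separating arc must therefore meet $S$.

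For (i), Proposition \ref{futcon} already guarantees that each $[b_i^l]_\leq$ is connected, so it will suffice to show that these sets form a nested increasing family. I would do this via the orbit of $z_i$: by item \ref{pto1} of Lemma \ref{ends}, $f^{l_i+l-1}(z_i) \in b_i^l$ and $f^{l_i+l}(z_i) \in b_i^{l+1}$, so $f(b_i^l) \cap b_i^{l+1} \neq \emptyset$ and hence $b_i^l \in [b_i^{l+1}]_\leq$. Since $[b_i^{l+1}]_\leq$ is a repeller, Remark \ref{br}, item \ref{br1}, yields $[b_i^l]_\leq \subset [b_i^{l+1}]_\leq$, and $S$ is connected as an increasing union of connected sets.

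For (ii), the Hausdorff convergence $b_i^l \to \{\om_i\}$ from item 8 of Lemma \ref{ends}, together with $b_i^l \subset [b_i^l]_\leq \subset S$, gives $\om_i \in \overline{S}$ at once. For $\alpha_i \in \overline{S}$ I would fix $l_0 = 1$ and run an orbit count in the opposite direction: item \ref{pto1} of Lemma \ref{ends} gives $f^{m+1-l_i}(z_i) \in b_i^m$ for each $m \leq -1$, so choosing $k = 2l_i + l_0 - m - 2 \geq 0$ one has $f^k(b_i^m) \ni f^{l_i+l_0-1}(z_i) \in b_i^{l_0}$, whence $b_i^m \in [b_i^{l_0}]_\leq$. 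As $m \to -\infty$, item 8 of Lemma \ref{ends} gives $b_i^m \to \{\alpha_i\}$, so $\alpha_i \in \overline{[b_i^{l_0}]_\leq} \subset \overline{S}$.

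Step (iii) is purely topological. Since $\Gamma$ separates $\alpha_i$ from $\om_i$ in $\overline{\D}$, the open complement $\overline{\D} \setminus \Gamma$ has two distinct components $U_\alpha$ and $U_\om$ containing $\alpha_i$ and $\om_i$ respectively. By (ii) the connected set $S$ accumulates at both $\alpha_i$ and $\om_i$, and since $U_\alpha$ is an open neighborhood of $\alpha_i$ in $\overline{\D}$ (and similarly for $U_\om$), $S$ meets each of $U_\alpha$ and $U_\om$. A connected set that meets two distinct components of $\overline{\D} \setminus \Gamma$ must meet $\Gamma$, so $S \cap \Gamma \neq \emptyset$. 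The only bookkeeping that needs any care is the exponent count in (ii); the rest is a direct application of the structural results already set up.
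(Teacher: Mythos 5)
Your proof is correct and takes essentially the approach the paper leaves implicit (the paper states this as a Remark without proof, treating it as a routine consequence of Lemma \ref{ends} and Remark \ref{puntas}). Your three-step decomposition is the natural one: the nesting $[b_i^l]_\leq \subset [b_i^{l+1}]_\leq$ via $f^{l_i+l-1}(z_i) \in b_i'^l$ gives connectedness; the Hausdorff convergence of $(b_i'^l)_{l\geq 1}$ to $\om_i$ and the orbit-counting argument showing $b_i^m \in [b_i^{1}]_\leq$ for $m$ sufficiently negative give both limit points in the closure; and the separation argument is standard. Two small imprecisions worth flagging but not affecting validity: you write ``$b_i^l \to \{\om_i\}$'' and ``$b_i^m \to \{\alpha_i\}$'', but Lemma \ref{ends} only gives Hausdorff convergence of the smaller disks $b_i'^l$, not of the bricks $b_i^l$ containing them --- fortunately $b_i'^l \subset b_i^l \subset S$ is all you need; and the inequality $k = 2l_i + l_0 - m - 2 \geq 0$ need not hold for every $m \leq -1$ (it depends on $l_i$), but it certainly holds for all $m$ sufficiently negative, which suffices for $\alpha_i \in \overline{S}$.
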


\begin{lema} If for some $k>0$, $m>0$ and $j\in \Z/n\Z$, both $b_j^k$ and $b_{j+1}^k$ are contained in $[b_i^{-m}]_>$,  
then there exists $l>0$ such that $b_{j+2}^l\in [b_i^{-m}]_> $.
 
\end{lema}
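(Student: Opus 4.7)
The plan is to apply Remark \ref{cruz} to a separating arc carefully constructed inside the attractor $A:=[b_i^{-m}]_>$. Since $f$ is non-recurrent, Proposition \ref{futcon} ensures that $A$ is connected. As $A$ is an attractor containing $b_j^k$ and $b_{j+1}^k$, Remark \ref{br}, item \ref{br1}, yields $b_j^{k+s},b_{j+1}^{k+s}\in A$ for all $s\geq 0$; combined with Lemma \ref{ends} (the sequences $(b_r'^l)_{l\geq 1}$ converge in Hausdorff topology to $\om_r$, and $b_r'^l\subset b_r^l$), this gives $\om_j,\om_{j+1}\in\overline A$ in $\overline\D$.

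I would then construct an arc $\eta:[0,1]\to\overline\D$ with $\eta(0)=\om_j$, $\eta(1)=\om_{j+1}$, and $\eta((0,1))\subset A$, by truncating the arcs $\Gamma_j^+,\Gamma_{j+1}^+$ from Remark \ref{puntas} so that they start inside bricks $b_j^l,b_{j+1}^l$ with $l\geq k$ (and hence inside $A$), and joining their starting points by a path contained in $A$, using the connectedness of $A$. By the elliptic order property of the points $\{\alpha_r,\om_r\}_{r\in\Z/n\Z}$ on $\partial\D$, the positive circle arc from $\om_j$ to $\om_{j+1}$ contains only $\alpha_{j+2}$, while the complementary circle arc contains every other such point, in particular $\om_{j+2}$. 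Thus $\eta$ separates $\alpha_{j+2}$ from $\om_{j+2}$ in $\overline\D$.

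Applying Remark \ref{cruz} with index $j+2$ to the arc $\eta$, we conclude that $\eta$ meets $\bigcup_{l>0}[b_{j+2}^l]_\leq$ at some point $p$. Since $\eta\cap\partial\D=\{\om_j,\om_{j+1}\}$ and every brick lies in $\D$, necessarily $p\in\eta((0,1))\subset A$. Up to a small perturbation of $\eta$ inside $A$, one may take $p$ to lie in the interior of some brick $b''\in[b_{j+2}^{l_0}]_\leq$ for some $l_0>0$, and then $b''\in A$ because $A$ is a union of whole bricks and $p\in\inte(b'')\cap A$. The relation $b''\in[b_{j+2}^{l_0}]_\leq$ is equivalent to $b_{j+2}^{l_0}\in[b'']_\geq$, and since $A$ is an attractor containing $b''$, Remark \ref{br}, item \ref{br1}, gives $[b'']_\geq\subset A$; therefore $b_{j+2}^{l_0}\in A$, which is precisely the required $l=l_0>0$.

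The main technical difficulty is the clean construction of the separating arc $\eta$ inside $A$, together with the genericity argument ensuring that the intersection point with $\bigcup_{l>0}[b_{j+2}^l]_\leq$ can be placed in the interior of a brick genuinely belonging to $A$ rather than merely adjacent to it. Once these points are handled, the rest reduces to a direct application of Remark \ref{cruz} and the attractor machinery of Remark \ref{br}.
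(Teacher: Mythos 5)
Your proof is correct and follows essentially the same route as the paper's: show $b_j^p, b_{j+1}^p \in [b_i^{-m}]_>$ for all $p\geq k$, extract an arc from $\om_j$ to $\om_{j+1}$ inside the (connected) attractor via Remark \ref{puntas}, use the cyclic order to see that this arc separates $\alpha_{j+2}$ from $\om_{j+2}$, apply Remark \ref{cruz}, and then transfer the brick $b_{j+2}^l$ into the attractor by the future-containment property of Remark \ref{br}. The paper is terser at the final step (it simply asserts the brick inclusion from the $M$-intersection "as the future of any brick is an attractor"), whereas you take care to land the intersection point in the interior of a brick genuinely belonging to $A$; this extra care is legitimate and addresses a genuine subtlety the paper glosses over, but it is a refinement of the same argument rather than a different one.
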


\begin{proof} If $b_j^k$ and $b_{j+1}^k$ are contained in $[b_i^{-m}]_>$, then $b_j^p$ and $b_{j+1}^p$
are contained in $[b_i^{-m}]_>$ for all $p\geq k$ (note that $[b_i^{-m}]_>$ is an attractor, and that Lemma \ref{ends}, item 6.
implies that $b_j^p\subset [b_j^k]_\geq$ for all $p\geq k$).  So, as $[b_i^{-m}]_>$ is connected, we can find an arc
$$\Gamma : [0,1] \to [b_i^{-m}]_> \cup \{\om_j, \om_{j+1}\}$$\noindent joining $\om_j$ and $\om_{j+1}$ 
(see Remark \ref{puntas}).  Then, 
$\Gamma$ separates $\alpha _{j+2}$ from $\om _{j+2}$ in $\overline \D$ (see Remark \ref{order}). By Remark \ref{cruz}, we 
obtain $$\Gamma \cap (\cup _{l>0}
[b_{j+2}]_<^l) \neq \emptyset.$$ \noindent So, $$ [b_i^{-m}]_> \cap (\cup _{l>0} [b_{j+2}]_<^l) \neq \emptyset , $$ from which one  
gets (as the future of any brick is an attractor) that there exists $l>0$ such that $b_{j+2}^l\in [b_i^{-m}]_> $.
 
\end{proof}

\begin{lema}\label{uno}{\bf (Domino effect)} There exists $k>0$ such that for all $i,j\in \Z/n\Z$, $[b_i^{-k}]_>$ contains $b_j^k$.
 
\end{lema}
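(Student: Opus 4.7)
The plan is to introduce, for each $i\in\Z/n\Z$, the set
\[
S_i \;=\; \{\, j\in\Z/n\Z : b_j^k\in[b_i^{-m}]_> \text{ for some } k,m>0 \,\},
\]
prove $S_i=\Z/n\Z$ for every $i$, and then uniformise by taking $k=\max_{i,j}\{k_{i,j},m_{i,j}\}$ over a choice of witnesses. This uniformisation works because the orbit of $z_i$ carries $b_i^{-k}$ forward to $b_i^{-m_{i,j}}$, so $[b_i^{-m_{i,j}}]_\geq\subset[b_i^{-k}]_>$, while $b_j^k\in[b_j^{k_{i,j}}]_\geq$ combined with the attractor property of $[b_i^{-k}]_>$ yields $b_j^k\in[b_i^{-k}]_>$.

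For the identity $S_i=\Z/n\Z$, the inclusion $i\in S_i$ is automatic from item \ref{pto1} of Lemma \ref{ends}, and the preceding lemma furnishes the induction step $j,j+1\in S_i\Rightarrow j+2\in S_i$ (after aligning witnesses $(k,m)$ via the attractor property, letting us take a common large $k$ and $m$). Thus I only need to exhibit one extra element in $S_i$; from $\{i-1,i\}\subset S_i$ one then propagates $i+1,i+2,\ldots$ around the whole cycle by repeated application of the previous lemma.

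To produce $i-1\in S_i$, I consider the attractor $A_i=\bigcup_{m\geq 1}[b_i^{-m}]_\geq$. The orbit of $z_i$ sends $b_i^{-m'}$ into $b_i^{-m}$ whenever $m'>m$, so this union is nested; by Proposition \ref{futcon}, $A_i$ is a connected attractor containing every $b_i^l$ with $l\in\Z\setminus\{0\}$. Since $b_i'^l\subset b_i^l\subset A_i$ by Corollary \ref{bon}, the arcs $\Gamma_i^\pm$ of Remark \ref{puntas} have interiors inside $A_i$ and endpoints $\alpha_i,\om_i\in\partial\D$; joining their interior endpoints through a path in the connected set $A_i$ and extracting a simple subarc yields an arc $\Gamma\subset\overline{\D}$ from $\alpha_i$ to $\om_i$ with $\Gamma\setminus\{\alpha_i,\om_i\}\subset A_i$. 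By the elliptic order, only $\om_{i-1}$ and $\alpha_{i+1}$ lie on the short $S^1$-arc between $\alpha_i$ and $\om_i$, so $\Gamma$ separates $\alpha_{i-1}$ from $\om_{i-1}$ in $\overline{\D}$. Remark \ref{cruz} applied to the index $i-1$ then gives $\Gamma\cap\bigcup_{l>0}[b_{i-1}^l]_\leq\neq\emptyset$, whence $A_i\cap\bigcup_{l>0}[b_{i-1}^l]_\leq\neq\emptyset$; arguing exactly as in the proof of the preceding lemma (using that $[b_i^{-m_0}]_\geq$ is an attractor), we obtain $l_0,m_0>0$ with $b_{i-1}^{l_0}\in[b_i^{-m_0}]_\geq$, and replacing $l_0$ by $l_0+1$ if necessary to rule out the coincidence $b_{i-1}^{l_0}=b_i^{-m_0}$ places us in the strict future, so $i-1\in S_i$.

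The main expected obstacle is purely topological book-keeping: turning the concatenated path into a genuine arc, and checking that the intersection supplied by Remark \ref{cruz} occurs inside $\D$ rather than at the boundary endpoints $\alpha_i,\om_i$. Both are routine, the latter because $\bigcup_{l>0}[b_{i-1}^l]_\leq\subset\D$.
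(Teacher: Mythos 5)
Your proposal is correct and follows essentially the same path as the paper: build the arc $\Gamma$ from $\alpha_i$ to $\om_i$ inside the (nested) future of the bricks $b_i^{-l}$, invoke the cyclic order and Remark \ref{cruz} to force an intersection with the past of the $b_{j}^l$ for a neighbouring index, and then propagate around the cycle via the preceding lemma, uniformising $k$ at the end. The only cosmetic differences are that you seed the induction at $i-1$ where the paper uses $i+1$, and you work with $[\cdot]_\geq$ and then pass to the strict future, whereas the paper states the arc directly inside $\bigcup_{l>0}[b_i^{-l}]_>$ (these two sets coincide here, since $b_i^{-l}\in[b_i^{-l-1}]_>$).
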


\begin{proof} Fix $i\in \Z/n\Z$.  There exists an arc 
$$\Gamma : [0,1] \to \cup _{l>0} [b_i^{-l}]_> \cup \{\alpha _i, \om_{i}\}$$\noindent
joining $\alpha _i$ and $\om_{i}$ (see Remark \ref{puntas}). Then, $\Gamma$ separates
$\alpha _{i+1}$ from $\om _{i+1}$ in $\overline \D$ (see Remark \ref{order}). So, Remark \ref{cruz} gives us 
$$\Gamma \cap (\cup _{l>0} [b_{i+1}^{l}]_<) \neq \emptyset . $$  So, 
$$ (\cup _{l>0} [b_i^{-l}]_>)\cap (\cup _{l>0} [b_{i+1}^{l}]_<) \neq \emptyset , $$ from which one immediately gets 
that there exists $l_i, m_i>0$ such that $b_{i+1}^{l_i}\in [b_i^{-m_i}]_> $. 
As $b_{i}^{l_i}\in [b_i^{-m_i}]_> $ as well, the previous lemma tells us that there exists $l>0$ such
that $b_{i+2}^l\in [b_i^{-m_i}]_> $. We finish the proof of the lemma by induction, and then taking $k>0$ large enough. 
\end{proof}

We are now ready to prove Proposition \ref{ell}:

\begin{proof}  We will show that $(([b_i^{-k}]_<)_{i\in \Z/n\Z},([b_i^{k}]_>)_{i\in \Z/n\Z})$ is an elliptic configuration,
where $k>0$ is given by the preceding lemma.  This contradicts our assumption that $f$ is not recurrent, 
by Proposition \ref{hc}.

We define $r_i = \Gamma _i^-\cap \cup_{m\geq k} b_{i}^{-k}$, and $a_i = \Gamma _i^+\cap \cup_{m\geq k} b_{i}^{k}$, 
$i\in \Z/n\Z$;  we may  suppose that the sets $r_i$, $a_i$, 
$i\in \Z/n\Z$ are arcs (the sets $\Gamma _i^-\cap \D,\  \Gamma _i^+\cap \D$ were defined in Remark \ref{puntas}).
These arcs $a_i, r_i$, $i\in \Z/n\Z$  satisfy the elliptic order property (see Remark \ref{airi}).  Besides, for all $i\in
\Z/n\Z$,

\begin{itemize}
 \item $r_i\subset [b_i^{-k}]_<$, 
\item $a_i\subset [b_i^{k}]_>$, and
\item $b_i^k\in [b_i^{-k}]_>$.
\end{itemize}

So, we only have to show that the sets $\{[b_i^{-k}]_<\},\{[b_j^{k}]_>\},$ are pairwise disjoint.  As we are supposing that $f$ is not recurrent, the preceding
lemma gives us that for any pair of indices $i,j$ in $\Z/n\Z$: $$[b_i^{-k}]_<\cap [b_j^{k}]_> = \emptyset.$$ Let us show that
 for for any pair of different indices $i,j$ in $\Z/n\Z$ one has
$$ [b_i^{-k}]_<\cap [b_j^{-k}]_< = \emptyset.$$ \noindent  Otherwise, there would exist $i\neq j$ such that
$[b_i^{-k}]_<\cup [b_j^{-k}]_<$ is a connected set containing $r_i$ and $r_j$.  As $[b_i^{-k}]_>$ is a connected set 
containing $a_j$ for all $j\in \Z/n\Z$ (again by the preceding lemma), the elliptic order property tells us:
$$([b_i^{-k}]_<\cup [b_j^{-k}]_<)\cap [b_i^{-k}]_> \neq \emptyset.$$ \noindent We deduce (as $f$ is not recurrent) that
$$ [b_j^{-k}]_<\cap [b_i^{-k}]_> \neq \emptyset,$$\noindent but then $[b_j^{-k}]_<$ is a connected set containing both
$r_j$ and $r_i$, and once again the preceding lemma and the elliptic order property imply 
$$[b_j^{-k}]_<\cap [b_j^{-k}]_> \neq \emptyset,$$\noindent a contradiction.
To prove that for any pair of different indices $i,j$ in $\Z/n\Z$ one also has
$$ [b_i^{k}]_>\cap[b_j^{k}]_> = \emptyset, $$\noindent it is enough to interchange the roles of $<$ and $>$, $k$ and $-k$
in the proof we just did.
\end{proof}

Our next proposition finishes the proof of Theorem \ref{main}:

\begin{prop}\label{negip} If $i(P) < 0$, then $\fix (f) \neq
 \emptyset$.
 
\end{prop}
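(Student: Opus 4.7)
The plan is to produce a hyperbolic Repeller/Attractor configuration of order $m\geq 2$ and then invoke Proposition \ref{2c}. If $f$ is recurrent then Franks' Lemma supplies a simple closed curve of index $1$, and Brouwer's fixed point theorem inside it yields $\fix(f)\neq\emptyset$, so I assume $f$ is non-recurrent. Lemma \ref{min} then allows me to replace $P$ by a minimal polygon of the same negative index, the alternative---an index-$1$ triangle---being excluded because, through Proposition \ref{ell}, it would make $f$ recurrent. All pasts and futures in a maximal free brick decomposition $(V,E,B)$ of $\D\setminus\fix(f)$ are now connected (Proposition \ref{futcon}); I fix such a decomposition together with the disks $b_i'^l$ and their enclosing bricks $b_i^l$ obtained from Lemma \ref{ends}, Remark \ref{ptsdf} and Corollary \ref{bon}.

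First I unpack the cyclic order of the $2n$ distinct points $\{\alpha_i,\om_i\}$ on $\partial\D$. A local analysis at each vertex $v_i$ of $P$ shows that the pair of points on $\partial\D$ contributed by the two rays of $\Delta_{i-1}$ and $\Delta_i$ emanating from $v_i$ into the complement of $P$ are of the same type (either both $\alpha$ or both $\om$) precisely when $\delta_i=1$. Counting adjacencies then gives $m:=n-1+i(P)\geq 2$ maximal $\alpha$-runs alternating cyclically with $m$ maximal $\om$-runs on $\partial\D$; I label them $\mathcal R_0,\mathcal A_0,\ldots,\mathcal R_{m-1},\mathcal A_{m-1}$.

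The core will be a hyperbolic analogue of the domino argument of Lemma \ref{uno}. Combining connectedness of futures and pasts with the arc-separation principle of Remark \ref{cruz}---now applied \emph{within each $\om$-run} rather than once around the whole circle---I would produce an integer $k>0$ for which the following holds: whenever $\alpha_i\in\mathcal R_j$ and $\om_{i'}$ lies in one of the two $\om$-runs $\mathcal A_j,\mathcal A_{j-1}$ adjacent to $\mathcal R_j$, then $b_{i'}^k\in [b_i^{-k}]_{>}$, and dually for pasts. Setting
\[
R_j \;=\; \bigcup_{\alpha_i\in\mathcal R_j}[b_i^{-k}]_{\leq}, \qquad A_j \;=\; \bigcup_{\om_{i'}\in\mathcal A_j}[b_{i'}^{k}]_{\geq},
\]
one obtains connected repellers $R_j$ and attractors $A_j$. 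Pairwise disjointness follows from non-recurrence exactly as in the proof of Proposition \ref{ell}: any forbidden intersection would, after one more application of the partial domino effect, force a brick into both its own strict past and strict future. The arcs $r_j,a_j$ chosen inside the interiors via Remark \ref{puntas} inherit the hyperbolic cyclic order $r_0\to a_0\to r_1\to a_1\to\cdots$, and the partial domino effect applied to the two extreme $\alpha_i$'s of each run $\mathcal R_j$ delivers the pair of bricks $b_j^j,b_j^{j-1}\in R_j$ whose strict futures meet $A_j$ and $A_{j-1}$ respectively. Proposition \ref{2c} then yields $\fix(f)\neq\emptyset$.

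The hard part will be the partial domino effect itself, especially when some run $\mathcal R_j$ is a singleton $\{\alpha_i\}$. In that case the orbit of $z_i$ directly provides only one of the two required forward connections, and the second has to be extracted from minimality of $P$: minimality forces the neighbouring lines $\Delta_{i\pm 1}$ to meet outside $\overline\D$, so that the near-vertex geometric adjacency on $\partial\D$ of $\alpha_i$ with the cyclic neighbour $\om_{i'}$ in $\mathcal A_{j-1}$ cannot be decoupled from the connected attractor $[b_i^{-k}]_{>}$ without violating the arc-separation principle of Remark \ref{cruz}.
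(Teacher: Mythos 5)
Your approach is genuinely different from the paper's, and it is worth comparing the two routes explicitly. The paper does \emph{not} group the $\alpha_i$'s into runs. Instead it reduces to the case $\delta_i=1$ for every $i$ by \emph{modifying the homeomorphism}: Lemma \ref{triangular}, built on the Franks-type surgery Lemma \ref{gl}, produces a new $g\in\homeo^+(\D)$ with $\fix(g)=\fix(f)$ that realizes an $(n-1)$-gon, killing a vertex where $\delta_i=0$; iterating this yields a minimal $2m$-gon with $\delta_i=1$ everywhere and the clean cyclic order of Remark \ref{orderh}. Only then does the paper run a domino effect (Lemma \ref{h1}) and assemble a hyperbolic configuration from the single pasts $[b_i^{-k}]_<$ and single futures $[b_i^k]_>$ over even $i$, never needing unions. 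Your plan skips the surgery entirely and tries to build the configuration directly from the original polygon by grouping. If it worked it would be a real simplification, because the surgery is the heaviest technical ingredient in this part of the paper. But as written there are two genuine gaps.

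The fatal one is exactly the one you flag as ``the hard part'': the singleton-run case. When $\mathcal R_j=\{\alpha_i\}$ (which happens precisely when $\delta_i=0$, since a mixed vertex pair contributes an isolated $\alpha$ and an isolated $\om$ adjacent to each other on $\partial\D$), the orbit of $z_i$ supplies a forward connection from $R_j$ only to $\mathcal A_j$, where $\om_i$ lives, and \emph{not} to $\mathcal A_{j-1}$. The arc-separation mechanism of Remark \ref{cruz} cannot produce the missing connection either: an arc joining $\alpha_i$ to $\om_i$ through $\bigcup_l[b_i^{-l}]_>$ does not separate, in $\overline\D$, the isolated $\om_{i'}$ of $\mathcal A_{j-1}$ from its $\alpha_{i'}$ --- $\om_{i'}$ is the immediate cyclic neighbour of $\alpha_i$ on the \emph{other} side, and both $\alpha_{i'}$ and $\om_{i'}$ lie in the same complementary arc. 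So Lemma \ref{uno}'s template simply doesn't fire in that direction. What you hope minimality will deliver is precisely the content of the surgery: Lemma \ref{triangular} literally \emph{creates} an orbit from $\alpha_{i-1}$ to $\om_i$, which after re-indexing is the missing forward connection. The sentence ``minimality forces \ldots cannot be decoupled \ldots without violating the arc-separation principle'' is not an argument, and I do not see how to make it one without effectively re-deriving the surgery.

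The second gap is connectedness and disjointness of the grouped sets. You set $R_j=\bigcup_{\alpha_i\in\mathcal R_j}[b_i^{-k}]_\leq$, but there is no reason for distinct pasts $[b_i^{-k}]_\leq$, $[b_{i'}^{-k}]_\leq$ with $\alpha_i,\alpha_{i'}$ in the same run to intersect; their closures accumulate on different boundary points. You would need a separate argument (presumably another separation/domino step) to show these pasts meet, and a further one to show that the resulting unions for different $j$ stay disjoint --- ``exactly as in Proposition \ref{ell}'' does not carry over, because there the configuration is elliptic and no unions are taken. The paper sidesteps both issues by always working with a \emph{single} past per repeller and a \emph{single} future per attractor, which is what the reduction to $\delta_i=1$ buys. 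Your run-count $m=n-1+i(P)$ and the observation about vertex pairs and $\delta_i$ are correct and are implicit in the paper's Remark \ref{orderh}; the idea of passing directly to runs is attractive, but to make it rigorous you would still have to supply the missing half of the domino effect at each $\delta_i=0$ vertex, and I believe that forces you back to something like Lemma \ref{triangular}.
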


By Lemma \ref{min} and Proposition \ref{ell}, we can suppose that $P$ is minimal.  We would also like to suppose that 
$\delta _i = 1$ for all $i\in \Z/n\Z$, so as to fix the cyclic order of the points $\{\alpha _i\}, \{\om _i\},$ at 
the circle at infinity.  For
this reason, we introduce the following lemma. 

\begin{lema}\label{triangular} If $\delta _i = 0$ for some $i\in \Z/n\Z$, then there exists $g\in \homeo ^+ (\D)$ such that :
\begin{enumerate}
 \item $\fix (g) = \fix (f)$;
\item $g=f$ on the orbits of the points $z_j$, $j\notin \{i-1,i\}$,
\item there exists $z\in \D$ such that $\lim _{k \to -\infty} g^k(z) = \alpha_{i-1}$ and $\lim _{k \to +\infty} 
g^k(z) = \om _{i}$.
\end{enumerate}

\end{lema}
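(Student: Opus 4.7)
The plan is to construct $g$ as a composition $g=h\circ f$, where $h\in\homeo^+(\D)$ is supported in a tiny ``splicing region'' $V$ close to $\partial\D$ and realigns the forward tail of the orbit of $z_{i-1}$ onto the backward tail of the orbit of $z_i$. The hypothesis $\delta_i=0$ is used only to produce this region: the common orientation of $e_{i-1}$ and $e_i$ (both having $P$ on the same side) forces $\om_{i-1}$ and $\alpha_i$ to be consecutive on $\partial\D$ in the cyclic order of the $2n$ points $\{\alpha_j,\om_j\}_{j\in\Z/n\Z}$, and this is what allows $V$ to be chosen disjoint from all the other orbits.

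Concretely, I would first pick a simply connected open set $U\subset\overline\D$ whose trace on $\partial\D$ is a boundary arc joining $\om_{i-1}$ to $\alpha_i$ containing no other $\alpha_j$ or $\om_j$, small enough that $\overline U\cap\fix(f)=\emptyset$. Using the extension hypothesis I then choose $N$ so that $f^k(z_{i-1})\in U$ for all $k\ge N+1$ and $f^{-k}(z_i)\in U$ for all $k\ge N$. Shrinking $U$ further if necessary, I arrange in addition that $U$ avoids $\{f^k(z_{i-1}):k\le N\}\cup\{f^k(z_i):k\ge -N+1\}$ and every orbit of $z_j$ with $j\notin\{i-1,i\}$. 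Inside $U\cap\D$, I pick an arc $\gamma$ from $f^{N+1}(z_{i-1})$ to $f^{-N}(z_i)$ avoiding $\fix(f)$ and the iterates just listed (possible since the excluded set is closed and discrete in the connected open set $U\cap\D$); after a small perturbation making $\gamma\cap f(\gamma)=\emptyset$, a sufficiently thin tubular neighborhood $V$ of $\gamma$ inside $U\cap\D$ satisfies $V\cap f(V)=\emptyset$ (freeness) and retains all the avoidance conditions.

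Now take $h\in\homeo^+(\D)$ supported in $V$, isotopic to the identity, with $h(f^{N+1}(z_{i-1}))=f^{-N}(z_i)$; this exists because $V$ is a topological disk. Set $g=h\circ f$. Condition (1) follows from the freeness of $V$: if $g(x)=x$ and $f(x)\notin V$ then $h(f(x))=f(x)$ and $x\in\fix(f)$, while $f(x)\in V$ would give $x=h(f(x))\in V$, contradicting $V\cap f^{-1}(V)=\emptyset$. Condition (2) is immediate since the other orbits avoid $V$. For (3), let $z=z_{i-1}$; an induction over $k\le N$ using $V\cap\{f^k(z_{i-1}):k\le N\}=\emptyset$ gives $g^k(z)=f^k(z_{i-1})$, so $g^k(z)\to\alpha_{i-1}$ as $k\to-\infty$; then $g^{N+1}(z)=h(f^{N+1}(z_{i-1}))=f^{-N}(z_i)$, and a symmetric forward induction, using $V\cap\{f^k(z_i):k\ge -N+1\}=\emptyset$, yields $g^{N+1+j}(z)=f^{-N+j}(z_i)\to\om_i$ as $j\to\infty$.

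The main obstacle is the very first step: translating the combinatorial condition $\delta_i=0$ into the geometric statement that $\om_{i-1}$ and $\alpha_i$ are adjacent among the $2n$ points $\{\alpha_j,\om_j\}$ on $\partial\D$. This ought to follow from the convexity of $P$ and the definition of $\alpha_j,\om_j$ as first and last intersections of the oriented line $\Delta_j$ with $\partial\D$, exploiting that $e_{i-1}$ and $e_i$ share the vertex $v_i$ with $P$ on the same side of both; but verifying it cleanly requires a careful analysis of how the remaining lines $\Delta_j$ meet $\partial\D$, and once established, every other step of the argument is a routine tubular-neighborhood and ambient-isotopy construction modelled on the proof of Proposition \ref{gamak}.
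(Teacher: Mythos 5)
The central step of your proposal — ``after a small perturbation making $\gamma\cap f(\gamma)=\emptyset$, a sufficiently thin tubular neighborhood $V$ of $\gamma$ \dots satisfies $V\cap f(V)=\emptyset$'' — does not hold and cannot be repaired by any local argument. Freeness of an arc is a global topological constraint, not a transversality/genericity condition, and there is no reason a given arc from $f^{N+1}(z_{i-1})$ (near $\om_{i-1}$) to $f^{-N}(z_i)$ (near $\alpha_i$) can be made disjoint from its image by a $C^0$-small move: already for a Brouwer translation, a long arc is almost never free, and whether \emph{some} free arc joins two given points is a genuinely dynamical question. This is precisely the difficulty that the paper's machinery is designed to resolve, and your proof simply asserts it away.

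The paper's route is essentially the opposite of a single free disk: it works with the free maximal brick decomposition from Corollary \ref{bon}, uses the geometry of $\delta_i=0$ only to obtain the weaker separation statement that the arc $\Gamma\subset\bigcup_{l>0}[b_i^l]_<$ joining $\alpha_i$ to $\om_i$ separates $\alpha_{i-1}$ from $\om_{i-1}$, and then intersects $\bigcup_{j>0}[b_{i-1}^{-j}]_>$ (a connected set accumulating at both $\alpha_{i-1}$ and $\om_{i-1}$) with $\bigcup_{l>0}[b_i^l]_<$ to produce a \emph{chain of free bricks} $(b_m)_{0\le m\le p}$ from $b_{i-1}^{-j}$ to $b_i^k$. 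This chain is thickened into a chain of free, pairwise disjoint open disks, and the splice $g$ is then built by Lemma \ref{gl}, composing $f$ with a homeomorphism that acts locally in each of the $p+1$ small free disks — a Franks'-lemma-type construction. Your approach would bypass all of this, but only because it implicitly assumes a single large free set that is not available; the chain of small free sets is the correct replacement. Separately, your geometric claim that $\delta_i=0$ forces $\om_{i-1}$ and $\alpha_i$ to be \emph{consecutive} among the $2n$ points is stronger than what the paper uses and is not obviously true without invoking the minimality of $P$, whereas the separation statement the paper uses is elementary; your own final paragraph flags this as unresolved. The surrounding bookkeeping in your write-up (choice of $V$ disjoint from $\fix(f)$ and the other orbits, $\fix(g)=\fix(f)$ from freeness, the two inductions establishing the limits in condition (3)) is correct and is in fact close in spirit to the verification inside Lemma \ref{gl}; it is the existence of the single free arc that is the missing idea.
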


We will need the following lemma, which is nothing but an adaptation of Franks' Lemma (see 2.2).

\begin{lema}\label{gl} Let $(D_i)_{0\leq i \leq p}$ be a chain of free, open and pairwise disjoint disks for $f$, and take 
two points $x\in D_0$ and $y\in D_p$.

Then, there
exists $g\in \homeo ^+ (\D)$ and an integer $q\geq p$ such that:

\begin{itemize}
 \item $\fix(g) = \fix(f)$,
\item $g=f$ outside $\cup _{i=0}^p D_i$,
\item $g^q(x) = f(y)$.
\end{itemize}

\end{lema}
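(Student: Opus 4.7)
The proof is an adaptation of Franks' lemma. The strategy is to define $g$ by pre-composing $f$ with a homeomorphism $\tilde\psi$ supported in $\bigcup_{i=0}^{p} D_i$, so that $g=f$ outside this union by construction, and to choose $\tilde\psi$ so that it redirects the orbit of $x$ through each link of the chain, ending at $y$.

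Using the chain hypothesis, pick for each $0 \leq i \leq p-1$ a point $w_i \in D_i$ and an integer $k_i \geq 1$ with $f^{k_i}(w_i) \in D_{i+1}$. Set $u_0 = x$, $u_{i+1} = f^{k_i}(w_i) \in D_{i+1}$ for $0 \leq i \leq p-1$, and $w_p = y \in D_p$. For each $0 \leq i \leq p$, construct a homeomorphism $\tilde\psi_i : \D \to \D$, supported in $D_i$, with $\tilde\psi_i(u_i) = w_i$ (take $\tilde\psi_i = \id$ if $u_i = w_i$). Since the $D_i$ are pairwise disjoint, $\tilde\psi = \tilde\psi_0 \circ \cdots \circ \tilde\psi_p$ is order-independent, is the identity outside $\bigcup_i D_i$, and restricts to $\tilde\psi_i$ on each $D_i$. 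Set $g = f \circ \tilde\psi \in \homeo^+(\D)$.

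The identity $g = f$ outside $\bigcup_i D_i$ is immediate from the support of $\tilde\psi$. For the fixed points, if $z \in D_i$ then $\tilde\psi(z) = \tilde\psi_i(z) \in D_i$, so $g(z) = f(\tilde\psi_i(z)) \in f(D_i)$, which is disjoint from $D_i$ because $D_i$ is free; hence $g(z) \neq z$. Since the freeness of $D_i$ also precludes fixed points of $f$ inside $D_i$, we obtain $\fix(g) = \fix(f)$. For the orbit condition, an easy induction shows that if each $\tilde\psi_\ell$ fixes the intermediate transit points $f^j(w_i)$ (with $i \neq \ell$ and $1 \leq j \leq k_i - 1$) that happen to lie in $D_\ell$, then $g^j(u_i) = f^j(w_i)$ for every $1 \leq j \leq k_i$, so $g^{k_i}(u_i) = u_{i+1}$. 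Concatenating yields $g^{k_0 + \cdots + k_{p-1}}(x) = u_p$, and one final iterate gives $g(u_p) = f(\tilde\psi_p(u_p)) = f(w_p) = f(y)$, so $q = k_0 + \cdots + k_{p-1} + 1 \geq p$.

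The only genuine obstacle is arranging that the $\tilde\psi_\ell$ fix the intermediate transit points. This is where the two-dimensional topology is used: after a generic perturbation of the choices of $w_i$ ensuring that the finite set $\{u_j\} \cup \{w_j\} \cup \{f^j(w_i) : 1\leq j\leq k_i - 1\}$ consists of pairwise distinct points, we select inside each open disk $D_\ell$ an embedded arc joining $u_\ell$ to $w_\ell$ that avoids the finitely many transit points falling in $D_\ell$, and take $\tilde\psi_\ell$ to be supported in a sufficiently thin tubular neighborhood of this arc. The resulting homeomorphism fixes the transit points as required, completing the construction.
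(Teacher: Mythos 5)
Your proof is correct and follows essentially the same route as the paper: both set $g = f\circ(\text{homeomorphism supported in }\bigcup_i D_i)$, where the perturbation pushes the incoming point of each disk onto the chosen departure point, and both observe that freeness of the $D_i$ forces $\fix(g)=\fix(f)$.

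The one genuine point of divergence is how the intermediate transit points are handled. The paper disposes of them outright by asserting that the chain may be taken \emph{of minimal length}, so that every $f^k(z_i)$ with $0<k<k_i$ lies outside $\bigcup_j D_j$ and the perturbation never sees them; you instead allow transit points to lie inside the disks and construct each $\tilde\psi_\ell$ on a thin arc neighborhood avoiding them. Both choices work, and yours has the advantage of not requiring the (unargued) minimality reduction. One small imprecision: in the inductive step you say each $\tilde\psi_\ell$ only needs to fix the transit points $f^j(w_i)$ with $i\neq \ell$ that land in $D_\ell$, but a point $f^j(w_\ell)$ with $2\le j\le k_\ell-1$ can perfectly well return to $D_\ell$ (only $j=1$ is excluded by freeness), and these must be fixed by $\tilde\psi_\ell$ as well. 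Your actual construction — an arc in $D_\ell$ from $u_\ell$ to $w_\ell$ avoiding \emph{all} the transit points that fall in $D_\ell$ — already handles this, so the slip is only in the phrasing of the requirement, not in the construction.
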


\begin{proof} Take $z_i\in D_i$ and  $k_i>0$ the smallest positive integer such that $f^{k_i} (z_i) \in D_{i+1}$, 
$i\in \{0, \ldots, p-1\}$. We may suppose that the chain $(D_i)_{0\leq i \leq p}$ is of minimal lenght; that is, every
$f^k(z_i), 0<k<k_i$ is outside $\cup _{j=0}^p D_j$.  We construct a homeomorphism $h_0$ which is the identity outside $D_0$
and such that $h_0(x) = z_0$, and a homeomorphism $h_p$ which is the identity outside $D_p$
and such that $h_p(f^{k_{p-1}}(z_{p-1})) = y$.  For $i\in \{1, \ldots, p-1\}$, we construct homeomorphisms $h_i$ such that:

\begin{itemize}
 \item $h_i$ is the identity outside $D_i$,
\item $h_i(f^{k_{i-1}}(z_{i-1})) = z_i$
\end{itemize}

Finally, we construct a homeomorphism $h$ which is the identity outside $\cup _{j=0}^p D_j$ and identical to $h_i$ in $D_i$,
$i\in \{0, \ldots, p\}$.

So, as the disks $\{D_i\}$ are free, $g = f\circ h$ satisfy all the conditions of the lemma. 
 
\end{proof}

The proof of Lemma \ref{triangular} follows.

\begin{proof} We will first construct a  brick decomposition that suits our purposes.  As the points $\alpha _{i-1},\alpha_i, 
\om_{i-1},  \om_i$ are all different and $f$ is not recurrent, we can construct families of closed disks 
$(b_i'^k)_{k\in \Z\backslash \{0\}}$, $(b_{i-1}'^k)_{k\in \Z\backslash \{0\}}$ as in Lemma \ref{ends} with the property that the interiors of the 
bricks in these families are pairwise disjoint.

Let $O= \cup _{i\in \Z/n\Z, k\in \Z} f^k (z_i)$.  Here again we construct a maximal free brick decomposition  such that for all $l \in \Z \backslash \{0\}$, there exists $b_i^l, b_{i-1}^l
\in B$ such that $b_i'^l\subset  b_{i}^l$ and $b_{i-1}'^l\subset b_{i-1}^l$.  Furthermore, we may suppose that  for all $x\in O$  there exists $b_x \in B$ 
such that $x \in \inte (b_x)$.

If $\delta _i = 0 $ for some $i \in \Z/n\Z$, then $P$ is either to the right of both $\Delta _i$ and $\Delta _{i-1}$ or either 
to the left of both $\Delta _i$ and $\Delta _{i-1}$.  We will suppose that $P$ is to the left of both
lines, as the other case is analogous. By Remark \ref{puntas}, we can find an arc 
$$\Gamma : [0,1]\to \cup _{l>0} [b_i^l]_<$$\noindent joining $\alpha _{i}$ and $\om_i$.  
So, $\Gamma$ separates in $\overline \D$ $\alpha _{i-1}$ from $\om _{i-1}$. This implies that there exist two positive integers
 $j,k $ such that $$[b_{i-1}^{-j}]_> \cap [b_i^{k}]_<\neq \emptyset$$\noindent (note that $\cup _{j>0} [b_{i-1}^{-j}]_>$ is a connected set whose
closure contains $\alpha_{i-1}$ and $\om _{i-1}$). So, we can find a sequence of
bricks $(b_m)_{0\leq m\leq p}$  such that $b_0 = b_{i-1}^{-j}$, $b_p =b_{i}^k$ and $f(b_m)\cap b_{m+1} \neq \emptyset$ if 
$m\in \{0,\ldots, p-1\}$.   We will suppose that this sequence is of minimal lenght, that is:

$$f(b_m)\cap b_{m'} \neq \emptyset \Rightarrow m' = m+1 (*).$$ \noindent

We define for all $1\leq m\leq p-1$  $$X_m = b_m \backslash O . $$\noindent
We also define  $$X_0 = b_0 \backslash (O -\{f^{-k_{i-1} -j+1}(z_{i-1})\})$$\noindent and 
$$X_p = b_p\backslash (O -\{f^{k_{i}
 + k-1}(z_{i})\})$$\noindent (we recall from Lemma \ref{ends} that $f^{-l_{i-1} -j+1}(z_{i-1})$ is the only point of the orbit 
of $z_{i-1}$  which lies in $b_0$, and that $f^{l_{i} + k-1}(z_{i})$ is the only point of the orbit 
of $z_{i}$ which lies in $b_p$). As every $x\in O$ belongs to the interior 
of a brick, we know that $$f(X_m)\cap X_{m+1} \neq \emptyset$$\noindent if 
$m\in \{0,\ldots, p-1\}$. 

For each $m\in \{0,\ldots, p-1\}$, we take $x_m\in X_m $ such that $f(x_m)\in X_{m+1}$.  We take an arc 
$\gamma _0\subset X_0$ from $f^{-k_{i-1} -j+1}(z_{i-1})$ to $x_0$, and an arc $\gamma _p\subset X_p$ from 
$f(x_{p-1})$ to $f^{k_{i} + k-1}(z_{i})$.  For each $m\in \{1, \ldots, p-1\}$ we take an
arc $\gamma _{m}\subset X_m$ joining $f(x_{m-1})$ and $x_m$.  As the interiors of the sets $\{X_m\}$  are pairwise
disjoint, the arcs $\{\gamma _m\}$ can only meet in their extremities.  However, condition $(*)$ implies that
the points $\{x_m\}$ (and thus the points $\{f(x_m)\}$  ) are all different.  Indeed, if
$x_m = x_{m'}$, then $f(x_m)\in X_{m'+1}$, and so $f(b_m)\cap b_{m'+1}\neq \emptyset$.  It follows by $(*)$ that
$m = m'$.  On the other hand, if $f(x_m) = x_{m'}$, we obtain that $f(b_m)\cap b_{m'}\neq \emptyset$, and so
$m' = m+1$.  This means that the arcs $\{\gamma _m\}$ are pairwise disjoint (some of them maybe reduced to a point).

It follows that we can thicken this
arcs $\{\gamma_m\}$ into free, open and pairwise disjoint disks $\{D_m\}$, such that $\gamma _m\subset D_m$, and such 
that $D_m\cap O = \emptyset$.

We are done by Lemma \ref{gl}.

\end{proof}

\begin{lema} Let $f$ realize a minimal $n$-gon $P$ such that $i(P)<0$. If $\delta _i =0$ for some $i\in \Z/n\Z$, then here
exists $g\in Homeo^+(\D)$ realizing an $n-1$-gone $P'$ such that $i(P') = i(P)$ and $\fix (g) = \fix(f)$.
\end{lema}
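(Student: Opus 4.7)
The plan is to apply Lemma \ref{triangular} to produce the homeomorphism $g$ and then exhibit an $(n-1)$-gon realized by it. First I would invoke Lemma \ref{triangular} to obtain $g \in \homeo^+(\D)$ satisfying $\fix(g) = \fix(f)$, with $g = f$ on the orbits of $z_j$ for every $j \notin \{i-1, i\}$, and with a point $z \in \D$ whose negative iterates accumulate at $\alpha_{i-1}$ and whose positive iterates accumulate at $\om_i$.

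Next I would describe $P'$ as follows. Let $\Delta'$ denote the chord of $\overline{\D}$ joining $\alpha_{i-1}$ and $\om_i$, oriented from $\alpha_{i-1}$ to $\om_i$. Since $\delta_i = 0$, the edges $e_{i-1}$ and $e_i$ have $P$ on a common side of their respective lines; I orient $\Delta'$ so that the half-plane $H'$ determined by its orientation lies on that same side. I then define $P'$ as the intersection of $H'$ with the half-planes $H_j$ associated with the lines $\Delta_j$ for $j \in \Z/n\Z \setminus \{i-1, i\}$. By construction $P'$ is convex; what must be verified is that it is a compact $(n-1)$-gon contained in $\D$, whose vertex set is $\{v_j : j \notin \{i-1, i, i+1\}\} \cup \{v'_{i-1}, v'_{i+1}\}$, where $v'_{i-1} = \Delta_{i-2}\cap \Delta'$ and $v'_{i+1} = \Delta'\cap \Delta_{i+1}$ both lie in $\D$.

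Once this geometric step is established, the realization by $g$ is immediate: each $z_j$ with $j \neq i-1, i$ realizes the unchanged edge $e_j$ of $P'$ under $g$, and $z$ realizes the new edge $e'$ lying on $\Delta'$. For the index computation, note that for every vertex $v_j$ of $P'$ with $j \notin \{i-1, i+1\}$ the two incident edges coincide with those of $P$ with unchanged orientations, hence $\delta'_j = \delta_j$. At the new vertices $v'_{i-1}$ and $v'_{i+1}$, the orientation of $e'$ was chosen so that $P'$ sits on the same side of $\Delta'$ as $P$ sits of $\Delta_{i-1}$ and of $\Delta_i$; therefore $\delta'_{i-1} = \delta_{i-1}$ and $\delta'_{i+1} = \delta_{i+1}$. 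Summing, $\sum_j \delta'_j = \sum_j \delta_j - \delta_i = \sum_j \delta_j$ because $\delta_i = 0$, whence $i(P') = i(P)$.

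The main obstacle is the geometric verification in the second paragraph: one must use the convexity of $P$, its minimality, and the hypothesis $\delta_i = 0$ to show that the $n-1$ lines $\{\Delta_j : j \neq i-1, i\} \cup \{\Delta'\}$ bound a compact convex $(n-1)$-gon inside $\D$, with the new vertices $v'_{i-1}, v'_{i+1}$ lying in the open disk. Minimality prevents any of the lines $\Delta_j$ from being redundant, and the coincidence of orientations $\delta_i = 0$ is precisely the condition which guarantees that the ``smoothing'' of the two edges $e_{i-1}, e_i$ by the single chord $\Delta'$ produces a bounded convex region with the expected combinatorial type, rather than breaking convexity or sending one of the new vertices to $\partial\D$ or outside.
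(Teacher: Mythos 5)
Your proof follows exactly the approach the paper uses: invoke Lemma \ref{triangular} to get $g$, then form $P'$ from the lines $\{\Delta_j : j\neq i-1,i\}$ together with the chord from $\alpha_{i-1}$ to $\om_i$, and check that the index is preserved. The paper states the conclusion ``bound an $(n-1)$-gon $P'$ such that $i(P')=i(P)$'' without verification, whereas you supply the index computation explicitly and correctly identify the geometric step (compactness, convexity, the expected vertex set, new vertices inside $\D$) as the part requiring the minimality and $\delta_i=0$ hypotheses. One small point worth cleaning up: you twice write that you ``orient'' or ``choose'' the orientation of $\Delta'$, but this orientation is in fact forced by the dynamics — the point $z$ travels from $\alpha_{i-1}$ to $\om_i$ under $g$, so $\Delta'$ must be oriented from $\alpha_{i-1}$ to $\om_i$. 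What actually needs checking is that this forced orientation places $P'$ on the side you want, which is another part of the geometric verification you defer.
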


\begin{proof}   By Lemma \ref{triangular}, there exists $g\in \homeo ^+ (\D)$ such that :
\begin{enumerate}
 \item $\fix (g) = \fix (f)$;
\item $g=f$ on the orbits of the points $z_j$, $j\in \Z/n\Z$, $j\notin \{i-1,i\}$,
\item there exists $z\in \D$ such that $\lim _{k \to -\infty} g^k(z) = \alpha_{i-1}$ and $\lim _{k \to +\infty} 
g^k(z) = \om _{i}$.
\end{enumerate}

The lines $(\Delta_j)_{j \in \Z/n\Z\backslash \{i,i-1\} } $
and the straight (oriented) line $\Delta _{*}$ from $\alpha _{i-1}$ to $\om _{i}$ bound an $n-1$-
gon $P'$ such that $i(P') = i(P)$, and $g$ realizes $P'$.

\end{proof}

By applying the previous lemma inductively,  there exists  $g\in \homeo ^+ (\D)$ such that $\fix (g) = \fix (f)$ and
$g$ realizes a minimal $n$-gon $P$ such that $i(P)<0$, and 
$\delta _i =1$ for all $i\in \Z/n\Z$.

This next lemma finishes the proof of Theorem \ref{main}:

\begin{lema}  If $f$ realizes a minimal $n$-gon $P$ such that $i(P)<0$, and $\delta _i =1$ for all $i\in \Z/n\Z$,
then $\fix(f) \neq \emptyset.$
 
\end{lema}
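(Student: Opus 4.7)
The plan is to adapt the strategy of Proposition \ref{ell}: assume $\fix(f)=\emptyset$ toward a contradiction; then $f$ must be non-recurrent (otherwise Franks' lemma would supply a simple closed curve of index one, enclosing a fixed point). I may then apply Lemma \ref{ends}, Remark \ref{ptsdf} and Corollary \ref{bon} exactly as in the elliptic case to obtain a free maximal brick decomposition $(V,E,B)$ of $\D$ with bricks $b_i^l\supset b_i'^l$ for $i\in\Z/n\Z$, $l\in\Z\backslash\{0\}$. The goal is to build a Repeller/Attractor configuration of order $m=n/2$ to which either Proposition \ref{hc} or Proposition \ref{2c} applies.

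Since $\delta_i=1$ for every $i$ and $i(P)=1-n/2<0$, the polygon has $n=2m$ vertices with $m\geq 2$. The alternating edge orientations force that, at every vertex $v_i$, the two adjacent lines $\Delta_{i-1}$ and $\Delta_i$ extend past $v_i$ through endpoints of the same type: two $\om$'s at odd vertices and two $\alpha$'s at even vertices. A parity argument then shows that the $2n=4m$ points $\{\alpha_i,\om_i\}$ on $\partial\D$ fall into $2m$ same-type cyclically adjacent pairs, with the $\alpha$-pairs and $\om$-pairs strictly alternating around the circle, giving $m$ of each. For $k\in\Z/m\Z$, I would let $R_k$ be the connected component in $B$ of $[b_i^{-L}]_<\cup[b_{i'}^{-L}]_<$, where $(i,i')$ is the $k$-th $\alpha$-pair; and define $A_k$ analogously from the $k$-th $\om$-pair, for $L$ large.

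The key technical step, the hyperbolic analog of the domino Lemma \ref{uno}, is to verify that for $L$ sufficiently large the two pasts appearing in the definition of each $R_k$ actually share a brick (so that $R_k$ contains both $[b_i^{-L}]_<$ and $[b_{i'}^{-L}]_<$, and its closure touches $\partial\D$ at both $\alpha_i$ and $\alpha_{i'}$), and likewise for $A_k$. I would prove this by a separation argument: Remark \ref{puntas} furnishes an arc $\Gamma$ from $\alpha_i$ to $\alpha_{i'}$ inside $\inte(\bigcup_{l\geq 0}(b_i^{-l}\cup b_{i'}^{-l}))\cup\{\alpha_i,\alpha_{i'}\}$, and non-recurrence combined with Remark \ref{cruz} forces the two pasts to merge along this arc. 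Pairwise disjointness of the $R_k$'s and $A_k$'s in $B$ then follows from non-recurrence and Proposition \ref{franksfino}, exactly as in the elliptic proof.

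Once the configuration $(R_k,A_k)_{k\in\Z/m\Z}$ is in place, each realization orbit $z_i$ supplies a dynamical arrow from the $R$-set containing $\alpha_i$ to the $A$-set containing $\om_i$. A direct case-by-case inspection, depending on the geometry of $P$, shows that the pattern of arrows satisfies, possibly after a cyclic relabeling of the $A$'s, either the hyperbolic order property of order $m\geq 2$ --- in which case Proposition \ref{2c} yields $\fix(f)\neq\emptyset$ --- or the elliptic order property of order $m\geq 3$, in which case Proposition \ref{hc} yields recurrence of $f$ and hence, again, a fixed point via an index-one closed curve. Either alternative contradicts our standing assumption. The main obstacle is the pairing step above, combined with the combinatorial verification that the arrow pattern always yields an elliptic or hyperbolic configuration; the latter must be carried out case by case depending on how the line endpoints interleave on $\partial\D$.
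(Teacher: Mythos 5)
Your overall strategy matches the paper's: assume $\fix(f)=\emptyset$ (hence $f$ non-recurrent), invoke Lemma \ref{ends}, Remark \ref{ptsdf} and Corollary \ref{bon} to get a maximal free brick decomposition, and aim for a Repeller/Attractor configuration of order $m=n/2$ to which Proposition \ref{2c} applies. The observation that the $4m$ points split into $m$ cyclically adjacent $\alpha$-pairs and $m$ cyclically adjacent $\om$-pairs, alternating around $S^1$, is correct and is exactly Remark \ref{orderh}. But there are two genuine gaps.

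First, your definition of $R_k$ as the connected component of $[b_i^{-L}]_<\cup[b_{i'}^{-L}]_<$ requires the two pasts to merge, and your sketch for this does not work: Remark \ref{puntas} only furnishes an arc $\Gamma_i^-$ landing at a \emph{single} limit point $\alpha_i$ inside $\inte(\cup_{l\geq 0}b_i'^{-l})\cup\{\alpha_i\}$; the set $\inte(\bigcup_{l\geq 0}(b_i^{-l}\cup b_{i'}^{-l}))$ has two disjoint tails near $\alpha_i$ and $\alpha_{i'}$ and contains no arc joining them. Nor is there a separation argument available, since there is no $\om_j$ sitting between $\alpha_i$ and $\alpha_{i'}$ on the circle. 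Indeed there is no reason for two pasts (two repellers) to intersect, and the separation machinery (Remark \ref{cruz}) only forces a repeller and an attractor to meet. The paper sidesteps this entirely by setting $R_j=[b_{2j}^{-k}]_<$ and $A_j=[b_{2j}^{k}]_>$ (only the even-indexed families), which already satisfies the hyperbolic order property since the even-indexed $\alpha$'s and $\om$'s alternate as $\alpha_0\to\om_0\to\alpha_2\to\om_2\to\ldots$. The required two dynamical arrows out of each $R_j$ are then supplied by a single brick $b_{2j}^{-k}$ via the hyperbolic domino Lemma \ref{h1}, which shows $[b_{2j}^{-k}]_>$ contains $b_l^k$ for $l\in\{2j-2,2j-1,2j,2j+1\}$; this lemma, proved by iterating the separation argument twice (first the arc $\alpha_i$--$\om_i$ separates $\alpha_{i\pm 1}$ from $\om_{i\pm1}$, then the arc $\om_{i-1}$--$\om_{i+1}$ separates $\alpha_{i-2}$ from $\om_{i-2}$), is the actual hyperbolic analog of Lemma \ref{uno} and is also what drives the pairwise-disjointness argument. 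Second, your final ``case-by-case inspection'' for whether the arrow pattern is elliptic or hyperbolic is a symptom of the above gap: once Remark \ref{orderh} is written down, the pattern is unambiguously hyperbolic of order $m\geq 2$ (arrows $R_k\to A_k$ and $R_k\to A_{k-1}$), never elliptic, so no cases are needed.
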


\begin{obs}\label{orderh} With these assumptions,  the cyclic order of the points  $\{\alpha _i\}, \{\om _i\},$ at the
circle at infinity satisfies:

$$ \alpha_i\to \alpha_{i-1}\to \om_{i+1}\to \om_i\to 
\alpha_{i+2}$$ \noindent for all even values of $i\in \Z/2m\Z.$

\end{obs}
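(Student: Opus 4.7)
The plan is to establish the cyclic order through an orientation analysis at each vertex combined with a convexity argument for the global arrangement on $\partial\D$. Since $\delta_i=1$ for every $i\in\Z/n\Z$, consecutive edges always place $P$ on opposite sides, forcing $n$ to be even, $n=2m$, and the edge orientations to alternate. Without loss of generality each even-index edge $e_{2j}$ is oriented $v_{2j}\to v_{2j+1}$ while each odd-index edge $e_{2j+1}$ is reversed, oriented $v_{2j+2}\to v_{2j+1}$, so that at every vertex $v_i$ both incident edges \emph{depart} from $v_i$ when $i$ is even, and both \emph{arrive} at $v_i$ when $i$ is odd. The extensions of $\Delta_{i-1}$ and $\Delta_i$ past $v_i$, which point into the opposite wedge $W_i$ at $v_i$ (the vertical angle to the interior angle of $P$) and reach $\partial\D$, therefore terminate at the two \emph{entering} endpoints $\alpha_{i-1},\alpha_i$ when $i$ is even, and at the two \emph{exiting} endpoints $\omega_{i-1},\omega_i$ when $i$ is odd.

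Next, I would show that these vertex-pairs appear consecutively on $\partial\D$ in the cyclic order of the vertices of $P$. Convexity of $P$, together with its minimality, should imply that the arc of $\partial\D$ lying inside each wedge $W_i$ contains no other chord endpoint $\alpha_j$ or $\omega_j$ with $j\notin\{i-1,i\}$. Granted this, and choosing without loss of generality the counterclockwise orientation for $P$ in the plane, the vertex-pairs occur in counterclockwise order $v_0,v_1,\ldots,v_{n-1}$ around $\partial\D$, alternating $\alpha$-pairs (at even vertices) and $\omega$-pairs (at odd vertices). A short local computation at each vertex, exploiting that $v_{i+1}$ is counterclockwise-next to $v_i$ around $P$, shows that within the pair at $v_i$ the extension pointing ``opposite to $v_{i+1}$'' (namely $\alpha_i$ for even $i$, or $\omega_i$ for odd $i$) lies counterclockwise-earlier on $\partial\D$ than the extension ``opposite to $v_{i-1}$'' (namely $\alpha_{i-1}$ or $\omega_{i-1}$). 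Concatenating, the counterclockwise cyclic order on $\partial\D$ reads
\[
\alpha_0,\,\alpha_{n-1},\,\omega_1,\,\omega_0,\,\alpha_2,\,\alpha_1,\,\omega_3,\,\omega_2,\,\ldots,\,\alpha_{2j},\,\alpha_{2j-1},\,\omega_{2j+1},\,\omega_{2j},\,\alpha_{2j+2},\,\ldots,
\]
matching exactly the claimed pattern $\alpha_i\to\alpha_{i-1}\to\omega_{i+1}\to\omega_i\to\alpha_{i+2}$ for every even $i\in\Z/2m\Z$.

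The main obstacle is the wedge-arc claim above: ruling out extraneous $\partial\D$-endpoints inside $W_i\cap\partial\D$. My plan is to argue by contradiction: such an endpoint would force another line $\Delta_j$ with $j\notin\{i-1,i\}$ to enter $W_i$ from outside, hence to cross the extension of $\Delta_{i-1}$ or of $\Delta_i$ past $v_i$ at some interior point $p\in\D$. Combined with the remaining lines, this interior crossing should allow one to exhibit a compact convex polygon bounded by a proper subcollection of the $\Delta_k$'s (using $p$ in place of $v_i$), contradicting the minimality of $P$. Making this reduction precise---selecting the appropriate subcollection and verifying that it bounds a compact convex polygon strictly inside $\D$---is where the bulk of the geometric work lies, and this is where the minimality hypothesis on $P$ is indispensable. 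The hypothesis $i(P)<0$ enters only to guarantee $m\geq 2$, hence $n\geq 4$, keeping the whole discussion non-degenerate.
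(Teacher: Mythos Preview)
The paper does not actually prove this statement: it is recorded as a Remark (an \texttt{obs} environment) immediately after the reduction to the case where $P$ is minimal and $\delta_i=1$ for all $i$, and no argument is given. The parallel Remark in the elliptic case (just before Proposition~\ref{ell}) is handled the same way. So there is no ``paper's proof'' to compare against; the author treats the cyclic order as a direct geometric consequence of convexity, minimality, and the alternating edge orientations.

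Your approach is a correct way to substantiate the observation. The reduction to alternating orientations (hence $n=2m$), the identification of each vertex wedge $W_i$ with an $\alpha$--pair (even $i$) or an $\omega$--pair (odd $i$), and the local ordering within each pair are all right and match a direct computation. You are also correct that the wedge--arc claim (no endpoint of $\Delta_j$, $j\notin\{i-1,i\}$, lies on the arc $W_i\cap\partial\D$) is the crux and that it genuinely requires minimality: for a flat non-minimal convex quadrilateral one can produce $\Delta_j$ entering a non-adjacent wedge. Your contradiction strategy---a stray crossing $p\in\D$ lets one replace $v_i$ (or $v_{i+1}$) by $p$ and bound a compact convex polygon with fewer lines---is the natural route; just be aware that making it precise takes some care about which edge to drop (in the quadrilateral example it is $e_i$, and the resulting triangle has $p=\Delta_{j}\cap\Delta_{i-1}$ as a vertex), and one should also note that the wedge--arc claim forces the wedge--arcs themselves to be pairwise disjoint, which is what gives the global cyclic order. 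Once that is in hand, your concatenated list is exactly the order asserted in the Remark.
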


We apply  Lemma \ref{ends} and obtain a family of closed disks $(b_i'^l)_{l\in \Z\backslash \{0\}, i\in \Z/2m\Z}$. We are allowed to suppose that all the bricks $(b_i'^l)_{l\in \Z\backslash \{0\}, i\in \Z/2m\Z}$ have pairwise disjoint interiors 
(see Remark \ref{ptsdf}).
We construct a maximal free brick decomposition $(V,E,B)$ such that for all $i\in \Z/2m\Z$ and for all $l\in \Z\backslash \{0\}$, there 
exists $b_i^l\in B$ such that $b_i'^l\subset b_i^l$ (see Corollary \ref{bon}).\\

We will suppose that $f$ is not recurrent, and we will show that we can construct a hyperbolic configuration.

\begin{lema}\label{h1}{\bf (Hyperbolic domino effect)}  There exists $k>0$ such that for all even values of $i\in \Z/2m\Z$, 
both attractors
$[b_i^{-k}]_>$ and $[b_{i-1}^{-k}]_>$ contain $b^k_l$ for all $l\in \{i-2, i-1, i, i+1\}$.
 
\end{lema}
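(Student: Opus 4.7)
The plan is to follow the structure of the elliptic domino effect (Lemma \ref{uno}), replacing the elliptic cyclic order by the hyperbolic order of Remark \ref{orderh}. The first step is to prove a hyperbolic analogue of the cascade lemma preceding Lemma \ref{uno}: namely, if $[b]_>$ contains both $b_{j-1}^{p}$ and $b_j^{p}$ for some even $j$ and $p>0$, then $[b]_>$ also contains $b_{j-2}^{q}$ and $b_{j+1}^{r}$ for some $q,r>0$. For this, the attractor $[b]_>$ is connected (Proposition \ref{futcon}) with closure meeting $S^1$ at both $\om_{j-1}$ and $\om_j$, so one can build an arc in $[b]_>\cup\{\om_{j-1},\om_j\}$ joining these two points at infinity. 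Remark \ref{orderh} shows that the short arc of $S^1$ from $\om_{j-1}$ to $\om_j$ contains exactly $\om_{j-2},\alpha_j,\alpha_{j-1},\om_{j+1}$, so this arc separates $\alpha_{j-2}$ from $\om_{j-2}$ and $\alpha_{j+1}$ from $\om_{j+1}$; applying Remark \ref{cruz} twice then produces the claimed bricks inside $[b]_>$.

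Next, fix any even $i\in\Z/2m\Z$. The family $([b_i^{-l}]_>)_{l>0}$ is increasing (because $b_i^{-l}\in[b_i^{-l-1}]_>$ thanks to Lemma \ref{ends} item 6), so its union is connected with closure accumulating at both $\alpha_i$ and $\om_i$; as in the proof of Lemma \ref{uno}, this yields an arc $\Gamma$ from $\alpha_i$ to $\om_i$ contained in $\cup_{l>0}[b_i^{-l}]_>\cup\{\alpha_i,\om_i\}$. By Remark \ref{orderh}, the short arc of $S^1$ between $\alpha_i$ and $\om_i$ contains exactly $\alpha_{i-1}$ and $\om_{i+1}$, so $\Gamma$ separates $\alpha_{i-1}$ from $\om_{i-1}$ and $\om_{i+1}$ from $\alpha_{i+1}$; Remark \ref{cruz} then produces bricks $b_{i-1}^{?}$ and $b_{i+1}^{?}$ inside $[b_i^{-k_i}]_>$ for some $k_i>0$. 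Combined with the trivial fact $b_i^{?}\in [b_i^{-k_i}]_>$ (since the orbit of $z_i$ passes through $b_i^{-k_i}$ and all positive bricks of its family), the cascade step of the previous paragraph now yields $b_{i-2}^{?}\in [b_i^{-k_i}]_>$, so $[b_i^{-k_i}]_>$ contains bricks indexed by all of $\{i-2,i-1,i,i+1\}$.

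An analogous analysis for the arc from $\alpha_{i-1}$ to $\om_{i-1}$ inside $\cup_{l>0}[b_{i-1}^{-l}]_>$, whose short side of $S^1$ contains exactly $\alpha_i$ and $\om_{i-2}$, shows that this arc separates $\alpha_i$ from $\om_i$ and $\alpha_{i-2}$ from $\om_{i-2}$, producing $b_i^{?}$ and $b_{i-2}^{?}$ inside $[b_{i-1}^{-k'_i}]_>$; the cascade step then adds $b_{i+1}^{?}$. Using the monotonicities $[b_i^{-k}]_>\supseteq[b_i^{-k'}]_>$ for $k\geq k'$ and $b_l^q\in[b_l^{q'}]_\geq$ for $q\geq q'$, and taking $k$ larger than all the finitely many $k_i,k'_i$ as $i$ ranges over the even indices, yields a single $k$ for which all the required containments hold simultaneously.

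The main obstacle I anticipate is the careful bookkeeping of the hyperbolic cyclic order. Unlike the elliptic case, where a single type of arc (joining consecutive $\om_j$'s) automatically separates the next pair $(\alpha_{j+2},\om_{j+2})$, the hyperbolic arrangement forces the use of two distinct arc types—those from $\alpha_i$ to $\om_i$ to initialize and those between adjacent $\om_{i-1}$ and $\om_i$ to cascade—and one must verify at each stage which pairs $(\alpha_j,\om_j)$ lie on opposite sides of the constructed arc before invoking Remark \ref{cruz}.
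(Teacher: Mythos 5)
Your proposal is correct and follows essentially the same route as the paper: both arguments start from an arc joining $\alpha_i$ to $\om_i$ inside $\cup_{l>0}[b_i^{-l}]_>$, use the hyperbolic cyclic order of Remark \ref{orderh} to conclude that this arc separates $\alpha_{i\pm1}$ from $\om_{i\pm1}$, and then perform one more separation step to capture the index $i-2$ (resp.\ $i+1$ for the $[b_{i-1}^{-k}]_>$ family), concluding with a uniform $k$ by finiteness. The only cosmetic difference is the bookkeeping of the second step: you factor it out as a ``hyperbolic cascade lemma'' patterned on the unnamed lemma preceding Lemma \ref{uno} and use an arc from $\om_{i-1}$ to $\om_i$, whereas the paper does the step inline using an arc from $\om_{i+1}$ to $\om_{i-1}$; both choices separate $\alpha_{i-2}$ from $\om_{i-2}$ and give the same conclusion.
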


\begin{obs} Note that for all $i=0\mod 2$:
$$\om_{i-1}\to \om_{i-2}\to \alpha_i \to \alpha_{i-1} \to \om_{i+1} \to \om_i.$$ So,
the ``future indices'' $\{i-2, i-1, i, i+1\}$ are those coming immediately before and immediately after 
the ``past indices'' $\{i,i-1\}$ in the cyclic order.
 
\end{obs}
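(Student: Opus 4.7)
The plan is to extend the elliptic domino argument of Lemma \ref{uno} by running it twice for each even $i$: once to fill $[b_i^{-k}]_>$ with all four prescribed bricks, and once to fill $[b_{i-1}^{-k}]_>$. For each $i\in\Z/2m\Z$, I first construct an arc $\Gamma_i$ from $\alpha_i$ to $\om_i$ inside $(\cup_{l>0}[b_i^{-l}]_>)\cup\{\alpha_i,\om_i\}$, exactly as in Lemma \ref{uno}. A direct reading of the hyperbolic cyclic order from Remark \ref{orderh} shows that $\Gamma_i$ separates both $(\alpha_{i-1},\om_{i-1})$ and $(\alpha_{i+1},\om_{i+1})$ on $\overline\D$, regardless of the parity of $i$. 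Remark \ref{cruz} then forces $\Gamma_i\cap\bigcup_{l>0}[b_{i\pm1}^l]_<\neq\emptyset$, which (exactly as in Lemma \ref{uno}) supplies integers $m,l>0$ with $b_{i\pm1}^l\in[b_i^{-m}]_>$; combined with the automatic $b_i^l\in[b_i^{-m}]_>$ along the orbit of $z_i$, this gives $b_{i-1}^l,b_i^l,b_{i+1}^l\in[b_i^{-m}]_>$ for a common pair $(l,m)$.

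The new step is to repeat the arc/separation trick with a second arc, now drawn inside the attractor we have just populated. For even $i$, once $b_{i-1}^l,b_i^l,b_{i+1}^l\in[b_i^{-m}]_>$, the closure of this attractor in $\overline\D$ contains $\om_{i-1},\om_i,\om_{i+1}$; connectedness of the attractor together with Remark \ref{puntas} then let me draw an arc $\Lambda_i$ from $\om_i$ to $\om_{i-1}$ inside $[b_i^{-m}]_>\cup\{\om_i,\om_{i-1}\}$. A direct check of the cyclic order in Remark \ref{orderh} shows that $\Lambda_i$ separates $(\alpha_{i-2},\om_{i-2})$ on $\overline\D$, so Remark \ref{cruz} gives $b_{i-2}^{l'}\in[b_i^{-m}]_>$. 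Applying the analogous construction to $[b_{i-1}^{-m}]_>$—which after the first step already contains $b_{i-2}^l,b_{i-1}^l,b_i^l$ by applying the first paragraph with index $i-1$—with an arc from $\om_i$ to $\om_{i-2}$ inside $[b_{i-1}^{-m}]_>\cup\{\om_i,\om_{i-2}\}$, the corresponding cyclic-order check shows this arc separates $(\alpha_{i+1},\om_{i+1})$, producing $b_{i+1}^{l'}\in[b_{i-1}^{-m}]_>$.

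To finish I replace $(m,l,l')$ by a single large $k$ using two standard monotonicity facts: the nested growth $[b_i^{-m}]_>\subset[b_i^{-m-1}]_>$ (valid because non-recurrence yields $b_i^{-m}\in[b_i^{-m-1}]_>$) and the attractor property $[b]_\geq\subset[b_i^{-m}]_>$ once $b\in[b_i^{-m}]_>$. Maximizing over the finitely many even indices $i\in\Z/2m\Z$ then produces a uniform $k>0$ with the required simultaneous containments.

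The step I expect to be the main obstacle is the combinatorial bookkeeping of the hyperbolic cyclic order: Remark \ref{orderh} groups the points $\{\alpha_j,\om_j\}$ into blocks of four around $\partial\D$, and one must verify carefully—both for the separations cut out by each $\Gamma_i$ at odd values of $i$, and for each auxiliary arc $\Lambda_i$—that the intended pair $(\alpha_{i-2},\om_{i-2})$ or $(\alpha_{i+1},\om_{i+1})$ really falls on the two distinct boundary arcs carved out on $\partial\D$. Once these separations are in hand, the remainder of the argument is a routine application of the attractor/past machinery already developed in Remarks \ref{br}, \ref{puntas} and \ref{cruz}.
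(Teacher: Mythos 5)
You have proved the wrong statement. The remark in question is a purely combinatorial observation about the cyclic order of the points $\{\alpha_j\},\{\om_j\}$ on the circle at infinity: it asserts that for even $i$ one has $\om_{i-1}\to \om_{i-2}\to \alpha_i \to \alpha_{i-1} \to \om_{i+1} \to \om_i$, and then merely rephrases this as the statement that the ``future indices'' $\{i-2,i-1,i,i+1\}$ flank the ``past indices'' $\{i,i-1\}$. No bricks, attractors, arcs or separation arguments enter into it. It follows immediately from Remark \ref{orderh}: writing the displayed order $\alpha_j\to\alpha_{j-1}\to\om_{j+1}\to\om_j\to\alpha_{j+2}$ at the two consecutive even indices $j=i-2$ and $j=i$ and splicing the two strings at $\alpha_i$ yields exactly the chain claimed. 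That one-line concatenation is the entire content, which is why the paper states it without proof.

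What you have written instead is a proof sketch of Lemma \ref{h1} (the hyperbolic domino effect): constructing arcs in $\cup_{l>0}[b_i^{-l}]_>$, invoking Remarks \ref{puntas} and \ref{cruz}, and propagating bricks into the attractors $[b_i^{-k}]_>$ and $[b_{i-1}^{-k}]_>$. As an argument for the remark this is both off-target and circular: at each step you appeal to ``a direct reading/check of the cyclic order from Remark \ref{orderh}'' to decide which pairs $(\alpha_{j},\om_{j})$ are separated by your arcs $\Gamma_i$ and $\Lambda_i$ --- but spelling out that cyclic order for even $i$ is precisely what the remark asks for, and you never actually derive the chain $\om_{i-1}\to\om_{i-2}\to\alpha_i\to\alpha_{i-1}\to\om_{i+1}\to\om_i$ from the hypotheses ($P$ minimal, $\delta_i=1$ for all $i$, boundary orientation). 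To repair this, discard the brick machinery and simply verify the concatenation of the order relations of Remark \ref{orderh} at indices $i-2$ and $i$; your domino-effect material, while broadly in the spirit of the paper's proof of Lemma \ref{h1}, belongs to that lemma and not here.
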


\begin{proof} By Remark \ref{puntas}, we can find an arc $$\Gamma : [0,1] \to \cup _{l\geq 1}[b_i^{-l}]_> \cup 
\{\alpha _i, \om_i\}$$\noindent joining $\alpha _i$ and $\om _i$. So, $\Gamma$ separates $\alpha _{i-1}$ from $\om _{i-1}$
and $\alpha _{i+1}$ from $\om _{i+1}$ (in $\overline \D$).  So, there exists $l>0$ such that $[b_i^{-l}]_> \cap [b_{i-1}^{l}]_<\neq \emptyset $ 
and $[b_i^{-l}]_> \cap [b_{i+1}^{l}]_<\neq \emptyset $. So, 
$$(\cup _{k\geq l} b_{i-1}^k) \cap (\cup _{k\geq l} b_{i+1}^k) 
\subset[b_i^{-l}]_>.$$\noindent  Using Remark \ref{puntas} again, we can find an arc 
$$\Gamma' : [0,1] \to [b_i^{-l}]_> \cup \{\om _{i+1}, \om_{i-1}\}$$\noindent joining $\om _{i+1}$ and $\om _{i-1}$. The cyclic
 order at $S^1$ of the points $\{\alpha _i\}, \{\om_i\}$,
 implies that $\Gamma '$ separates $\om _{i-2}$ from $\alpha _{i-2}$ in $\overline \D$.  So, 
$$\Gamma ' \cap \cup _{k\geq 1} [b_{i-2}^{k}]_< \neq \emptyset, $$\noindent  which implies that there exists $j>0$ such that
 $b_{i-2}^j\in [b_i^{-l}]_>$.
By taking $m>0$ large enough, we obtain that for all $l\in \{i-2, i-1, i, i+1\}$, $b^m_l \in [b_i^{-m}]_>$. Analogously we
obtain $b^p_l \in [b_{i-1}^{-p}]_>$  for all $l\in \{i-2, i-1, i, i+1\}$, for a suitable $p>0$. We finish by taking $k\geq \max 
\{m,p\}$
\end{proof}

We are now ready to prove Proposition \ref{simpleh}:

\begin{proof} We will show that $(([b_i^{-k}]_<)_{i=0\mod 2}, ([b_i^{k}]_>)_{i=0\mod 2})$ is a hyperbolic 
configuration, where $k>0$ is given by  Lemma \ref{h1} (the choice of even indices is arbitrary; we may as well have
 chosen the odd indices).  

By Remark \ref{orderh} and Lemma \ref{h1}, we just have to show that the sets 
$[b_i^{-k}]_<$, 
$[b_i^{k}]_>$, for $i$ even, are pairwise disjoint.   Lemma \ref{h1} also gives us, 
$$[b_i^{-k}]_<\cap [b_{i-2}^{k}]_>=\emptyset,$$ \noindent for $i$ even.
If
$[b_i^{-k}]_<\cap[b_j^k]_>\neq\emptyset$ for an even $j$ other than $i-2$, then we
can find an arc
$\Gamma : [0,1]\to [b_i^{-k}]_<\cup \{\alpha _{i}, \alpha _{j}\}$ joining $\alpha _{i}$ and 
$\alpha _j$.  The cylic order at $S^1$ of the points $\{\alpha_i\}, \{\om _i\}$ implies that $\Gamma$ separates
$\om _{i}$ from $\om _{i-2}$ in $\overline \D$.  As $[b_i^{-k}]_>$ is
a connected set whose closure contains both $\om _{i}$ and $\om _{i-2}$ (by the previous lemma), one gets 
$$[b_i^{-k}]_> \cap \Gamma \neq \emptyset$$ \noindent and so
$$[b_i^{-k}]_> \cap  [b_i^{-k}]_<\neq \emptyset , $$
\noindent which implies that $f$ is recurrent.  So, we have: 
$$[b_i^{-k}]_<\cap[b_j^k]_> = \emptyset ,  $$ \noindent for any pair of even indices $i,j$. 
We will show that $$[b_i^{-k}]_<\cap [b_j^{-k}]_<= \emptyset$$ \noindent for any two different even indices  $i,j$.  Otherwise,
 we could find an arc $$\Gamma : [0,1]\to [b_i^{-k}]_<\cup [b_j^{-k}]_< \cup
\{\alpha _{i}, \alpha _{j}\}$$ \noindent joining $\alpha _{i}$ and $\alpha _{j}$, from which we deduce again using the preceding
lemma that $$([b_i^{-k}]_<\cup [b_j^{-k}]_< )\cap [b_i^{-k}]_> \neq \emptyset .$$ 

\noindent So, as $f$ is not recurrent, we have
$$ [b_j^{-k}]_< \cap [b_i^{-k}]_> \neq \emptyset.$$  But now we can find  an arc $\Gamma : [0,1]\to [b_j^{-k}]_<
\cup \{\alpha _{i}, \alpha _{j}\}$ joining $\alpha _{i}$ and $\alpha _{j}$, which implies
$$[b_j^{-k}]_<\cap [b_j^{-k}]_>\neq \emptyset,$$ \noindent contradicting that $f$ is not recurrent.
The proof of the fact that $[b_i^{k}]_>\cap[b_j^k]_>= \emptyset$ for any two different even indices $i,j$, is completely
 analogous.
 
\end{proof}

\bibliography{jules}
\bibliographystyle{plain}

\author{$\ $ \\
Juliana Xavier\\
  I.M.E.R.L,\\
  Facultad de Ingenier\'ia,\\
Universidad de la Rep\'ublica,\\
  Julio Herrera y Reissig,\\
  Montevideo, Uruguay.\\
  \texttt{jxavier@fing.edu.uy}}

\end{document}